\newtheorem{theorem}{\textbf{Theorem}}[section]
\newtheorem{proposition}[theorem]{\textbf{Proposition}}
\newtheorem{lemma}[theorem]{\textbf{Lemma}}
\newtheorem{corollary}[theorem]{\textbf{Corollary}}
\theoremstyle{definition}
\newtheorem{definition}[theorem]{\textbf{Definition}}
\theoremstyle{remark}
\newtheorem{example}[theorem]{Example}
\newtheorem{remark}[theorem]{Remark}
\let\inf\undefined
\DeclareMathOperator*{\inf}{inf\vphantom{p}}
\numberwithin{equation}{section}
\title[Sparse Mean-Field Control Problems]{Generalized Dynamic Programming Principle and Sparse Mean-Field Control Problems}
\author[G. Cavagnari]{Giulia Cavagnari}
\address{\hspace{-0.5em}\begin{tabular}{ll}Giulia Cavagnari:&Department of Mathematics ``F. Casorati'',\\& University of Pavia\\ &Via Ferrata 5, I-27100 Pavia, Italy.\end{tabular}}
\email{giulia.cavagnari@unipv.it}
\author[A. Marigonda]{Antonio Marigonda}
\address{\hspace{-0.5em}\begin{tabular}{ll}Antonio Marigonda:&Department of Computer Science,\\& University of Verona\\ &Strada Le Grazie 15, I-37134 Verona, Italy.\end{tabular}}
\email{antonio.marigonda@univr.it}
\author[B. Piccoli]{Benedetto Piccoli}
\address{\hspace{-0.5em}\begin{tabular}{ll}Benedetto Piccoli:&Department of Mathematical Sciences,\\& Rutgers University - Camden\\ &311 N. 5th Street
Camden, NJ 08102, USA.\end{tabular}}
\email{piccoli@camden.rutgers.edu}
\date{\today}
\thanks{Declaration of interest: none.}
\keywords{multi-agent mean field sparse control, Hamilton-Jacobi equation in Wasserstein space, control with uncertainty, dynamic programming principle}
\subjclass[2010]{34A60, 49J15}
\begin{document}

\begin{abstract}
In this paper we study optimal control problems in Wasserstein spaces, 
which are suitable to describe macroscopic dynamics of multi-particle systems.
The dynamics is described by a parametrized continuity equation, in which the Eulerian velocity field is affine w.r.t. some variables.
Our aim is to minimize a cost functional which includes
a control norm, thus enforcing a \emph{control sparsity} constraint. 
More precisely, we consider a nonlocal restriction on the total amount of control that can be used depending on the overall state of the evolving mass. 
We treat in details two main cases: an instantaneous constraint
on the control applied to the evolving mass and 
a cumulative constraint, which depends also on the amount of control
used in previous times.
For both constraints, we prove the existence of optimal trajectories for general cost functions and that the value function is viscosity solution of a suitable
Hamilton-Jacobi-Bellmann equation. 
Finally, we discuss an abstract Dynamic Programming Principle, providing further applications in the Appendix.
\end{abstract}

\maketitle

\section{Introduction}
The investigation of optimal control problems in the space of measures is attracting an increasing interest by the mathematical community in the last years, due to the potential
applications in the study of complex systems, or multi-agent systems. See for example \cite{CPT} for crowd dynamics models, and \cite{ChapRUC} for social network analysis.

\par\medskip\par
Many physical and biological phenomena can be modelled by 
\emph{multi-particle system} with a large number of particles.
At the \emph{microscopic} level, the behaviour of each particle is determined by local and nonlocal interactions. As the number of particles increases, the complexity of the system grows very fast.
It turns out that only a \emph{macroscopic (statistical)} description of the state of such a system can be actually provided,
in the sense that an observer can deduce the state of the system only by measuring averages of suitable quantities (as, e.g., in statistical mechanics).
\par\medskip\par
Assuming that there are neither creation nor loss of agents during the evolution, it is natural to describe the state of the system by a time-dependent probability 
measure $\mu_t$ on $\mathbb R^d$, in the sense that for every Borel set $A\subseteq\mathbb R^d$ and $t\ge 0$ the quantity $\mu_t(A)$ corresponds to the fraction 
of the total agents that are cointained in $A$ at time $t$.
\par\medskip\par
An alternative point of view is the following: suppose that the state of the system is expressed by a time-dependent real valued map $\Phi_t$ defined on the set of continuous and
bounded functions on $\mathbb R^d$. In this sense, $\Phi_t(\varphi)\in\mathbb R$ expresses the result of the observer's measurement of the quantity $\varphi(\cdot)$ on the system at time $t$.
If we assume that $\Phi_t$ is positive, linear and continuous, 
by Riesz representation theorem, we have that $\Phi_t$ - and hence the state of the system - can be uniquely represented by a Borel measure $\mu_t$ on $\mathbb R^d$, in the sense that 
\[\Phi_t(\varphi)=\int_{\mathbb R^d}\varphi(x)\,d\mu_t(x).\]
\par\medskip\par
In \cites{CMNP,CM,CMO} time-optimal control problems in the space of probability measures $\mathscr P(\mathbb R^d)$ are addressed,
by considering systems without interactions among the agents. The authors were able to extend to this framework some classical results, 
among which an Hamilton-Jacobi-Bellman (briefly HJB) equation solved by the minimum-time function in a suitable viscosity sense.
However, a full characterization of the minimum time function in terms of uniqueness of the solution of the corresponding HJB equation is still missing.
\par\medskip\par
In \cite{Cav}, the author focuses on results concerning conditions for attainability and regularity of the minimum time function in $\mathscr P(\mathbb R^d)$.
A set of tangent distributions in the 2-Wasserstein space is introduced in \cite{Aver} for the study of viability results, 
i.e. weak invariance properties of subsets of probability measures for mean-field type controlled evolutions, with a dynamics taking into account
possible interactions among the agents.
\par\medskip\par
The study of viscosity solutions for general Hamilton-Jacobi equations in the space of measures (see e.g. \cites{CQ,GNT,GT}) deeply involves 
the definition of suitable notions of sub/super-differentials (see \cites{AGS,Carda}). 
Recently, in \cite{MQ}, a Mayer problem without interactions was studied. For such a problem, under strong regularity assumptions,
the authors provided a full characterization of the value function as the unique viscosity solution of a suitable HJB equation in the class
of Lipschitz continuous functions. The comparison principle used to establish the uniqueness of the viscosity solution 
has been extended in the forthcoming \cite{JMQ18}, where much milder assumptions are required. 
\par\medskip\par
The characterization of the value function as the unique solution of a PDE in Wasserstein spaces is currently one of the most challenging issues in 
control problems for time-evolving measures.
From the point of view of the necessary conditions, in \cite{BR} the authors provide a Pontryagin Maximum principle for a problem with interactions encoded in the dynamics by means of a nonlocal vector field, 
under strong regularity on the control function.
Finally, in \cite{FLOS} the authors investigate $\Gamma$-convergence results for a controlled nonlocal dynamics, 
proving thus the consistency of the mean field model in a measure-theoretic setting with the corresponding one in finite dimension.
\par\medskip\par
In this paper we study an optimal control problem for a multi-particle system subject to the influence of an external controller,
who aims to minimize a general cost function satisfying some basic properties.
The evolution of the system starting from an initial configuration $\mu_0\in\mathscr P(\mathbb R^d)$ is described by an absolutely continuous curve $t\mapsto \mu_t$ in the space of probability  measures,
and, recalling that the total mass is preserved along the evolution, the macroscopic dynamics will be expressed by the continuity equation
\[\begin{cases}
\partial_t\mu_t+\mathrm{div}\,(v_t\mu_t)=0,\\
\mu_{|t=0}=\mu_0.
\end{cases}\]
The link between the macroscopic dynamics and the controller's influence on each individual at the microscopic level is expressed by the constraint
on the Borel vector field $v$, requiring that 
\[v_t(x)\in F(x)\textrm{ for a.e. $t\in[0,T]$ and $\mu_t$-a.e. $x\in\mathbb R^d$},\]
where the set-valued map
\[F(x):=\left\{f(x,u):=f_0(x)+A(x)\,u\,:\,u\in U\right\}\]
represents the admissible velocities for the particles that transit through the point $x\in\mathbb R^d$,
$U\subset\mathbb R^m$ is a given convex and compact \emph{control set}, with $0\in U$, 
and $A(x)$ is a $d\times m$ matrix whose columns are the evaluations at $x\in\mathbb R^d$ of continuous vector fields $f_i:\mathbb R^d\to\mathbb R^d$, $i=1,\dots m$, 
with  uniform linear growth.
\par\medskip\par
The starting point of the problem we discuss is similar to the \emph{sparse control strategy} for multi-agent systems addressed in \cites{CFPT,FPR}, where 
the authors study the evolution of the entire population in the case of a control action concentrated only on a small portion of the total number of agents. 
\par\medskip\par
In this paper, we consider a similar interaction constraint but, instead of dealing with a nonlocal dynamics, we consider it as part of the cost functional to be minimized.
More precisely,
we define the \emph{control magnitude density}  $\Psi:\mathbb R^d\times\mathbb R^d\to [0,+\infty]$ by 
\[\Psi(x,v):=\begin{cases}\displaystyle\inf\left\{|u|:\,u\in U,\,f(x,u)=v\right\},&\textrm{ if $v\in F(x)$},\\ \\+\infty,&\textrm{ otherwise,}\end{cases}\]
which, under suitable assumptions, turns out to be continuous on $\mathrm{Graph}\,F$.
This map gives the minimum norm for a control generating an admissible velocity $v$ at $x$. In particular, the integral
\[\int_{\mathbb R^d}\Psi(x,v_t(x))\,d\mu_t,\]
can be considered as a measure of the \emph{effort} of the controller, combining the magnitude of the control used to implement the vector field $v_t(\cdot)$ 
and the quantity $\mu_t$ of particles to be controlled at time $t$.
The above integral can thus be naturally used to impose \emph{control sparsity constraints}, 
in the form of upper bounds on the controller's effort to drive the mass of particles. 
\par\medskip\par
To this aim, the constraint we will consider in this paper will be upper bounds on the $L^\infty$ norm or on the $L^1$ norm
of the map 
\begin{equation}\label{eq:costIntro}
t\mapsto\int_{\mathbb R^d}\Psi(x,v_t(x))\,d\mu_t.
\end{equation}
In the case of $L^\infty$ bound, at every instant of time the controller must choose how to use the maximum amount of effort available to control the population:
we notice that a weak action distributed on a large number of particles, or a strong action on a few number of individuals may require the same effort.
In the case of $L^1$ bound, the effects are cumulated in time, thus all the past history of the evolution must be taken into account.
\par\medskip\par
We stress out that the two kinds of interaction considered are at the \emph{macroscopic} level, not at the microscopic one. Indeed, they do not involve directly the behavior of each individual w.r.t. the others (like in the case of attraction/repulsion potentials). Instead, we are interested in the particles' \emph{collective evolution} and on the \emph{effort} required to generate it.
\par\medskip\par
Considering both our cases of interest, the main goals of this paper are the following:
\begin{itemize}
\item to study compactness results for the set of feasible trajectories;
\item to prove the existence of optimal trajectories for general cost functions;
\item to provide necessary conditions in the form of an HJB equation solved by the value function in a suitable viscosity sense.
\end{itemize}
In order to treat in a unified way the two cases, in Section \ref{sec:GF}, we provide an abstract framework for optimal control problems
proving a general Dynamic Programming Principle. Further possible applications of this framework are discussed in Appendix \ref{sec:appendix}.
\medskip

The paper is organized as follows: in Section \ref{sec:prelim} we recall some basic notion about Wasserstein spaces and fix the notation; 
in Section \ref{sec:rainbow} we outline the problem providing some preliminary results; in Section \ref{sec:GF} we prove a general 
Dynamic Programming Principle; in Section \ref{sec:sparse} we analyze the two control-sparsity problems ($L^\infty$-time averaged sparsity 
and $L^1$-time averaged sparsity cases) proving existence of optimal trajectories for general cost functions. 
In Section \ref{sec:HJB}, we study for each of the considered cases an Hamilton-Jacobi-Bellman equation solved 
by the value function in some suitable viscosity sense. In Section \ref{sec:examples}, we show that this theory can be applied to a more specific cost functional leading to the minimum time function. In Section \ref{sec:appl} we provide an application motivating our theoretical study and particularly our interest on the cost \eqref{eq:costIntro}.  Finally, in Appendix \ref{sec:appendix} we discuss other applications 
of the framework outlined in Section \ref{sec:GF}, while in Appendix \ref{sec:moments} we recall some estimates borrowed from \cite{CMNP,Cav}.

\section{Preliminaries and notation}\label{sec:prelim}

As main references concerning optimal transport and measure theory, the reader may refer to \cites{AGS,Santa,Vil}. \par\medskip\par
We will use the following notation.\par\medskip\par
{\small\begin{longtable}{ll}
$B(x,r)$&the open ball of radius $r$ of a normed space $X$,\\ &i.e., $B(x,r):=\{y\in X:\,\|y-x\|_X<r\}$;\\
$\overline K$&the closure of a subset $K$ of a topological space $X$;\\
$\mathrm{Id}_X(\cdot)$&the identity function of $X$,\\ &i.e. $\mathrm{Id}_X(x)=x$ for all $x\in X$;\\
$I_K(\cdot)$&the indicator function of $K$,\\ &i.e. $I_K(x)=0$ if $x\in K$, $I_K(x)=+\infty$ if $x\notin K$;\\
$\chi_K(\cdot)$&the characteristic function of $K$,\\ &i.e. $\chi_K(x)=1$ if $x\in K$, $\chi_K(x)=0$ if $x\notin K$;\\
$C^0_b(X;Y)$&the set of continuous bounded function from a Banach space $X$ to $Y$, \\
&endowed with $\|f\|_{\infty}=\displaystyle\sup_{x\in X}|f(x)|$ (if $Y=\mathbb R$, $Y$ will be omitted);\\
$C^0_c(X;Y)$&the set of compactly supported functions of $C^0_b(X;Y)$, \\
&with the topology induced by $C^0_b(X;Y)$;\\
$\Gamma_I$&the set of continuous curves from a real interval $I$ to $\mathbb R^d$;\\
$\Gamma_T$&the set of continuous curves from $[0,T]$ to $\mathbb R^d$;\\
$e_t$&the evaluation operator $e_t:\mathbb R^d\times\Gamma_I$\\ &defined by $e_t(x,\gamma)=\gamma(t)$ for all $t\in I$;\\
$\mathscr P(X)$&the set of Borel probability measures on a separable metric space $X$,\\
&endowed with the weak$^*$ topology induced by $C^0_b(X)$;\\
$\mathscr M(\mathbb R^d;\mathbb R^d)$&the set of vector-valued Borel measures on $\mathbb R^d$ with values in $\mathbb R^d$,\\
&endowed with the weak$^*$ topology induced by $C^0_c(\mathbb R^d;\mathbb R^d)$;\\
$\mathrm{supp}\,\mu$&the support of the measure $\mu$;\\
$|\nu|$&the total variation of a measure $\nu\in \mathscr M(\mathbb R^d;\mathbb R^d)$;\\
$\ll$&the absolutely continuity relation between measures defined on the same\\
&$\sigma$-algebra;\\
$\mathrm{m}_p(\mu)$&the $p$-th moment of a probability measure $\mu\in\mathscr P(X)$,\\ &i.e., $\mathrm{m}_p(\mu)=\|\mathrm{Id}_X\|^p_{L^p_{\mu}}$;\\
$r\sharp\mu$&the push-forward of the measure $\mu\in\mathscr P(X)$ by the Borel map $r$\\ &(see Definition \ref{def:push});\\
$\mu\otimes\eta_x$&the product measure of $\mu\in\mathscr P(X)$ with the Borel family of measures\\ &$\{\eta_x\}_{x\in X}$ (see Section 5.3 in \cite{AGS});\\
$\pi_i$&the $i$-th projection map $\pi_i(x_1,\dots,x_N)=x_i$;\\
$\pi_{ij}$&the $i,j$-th projection map $\pi_{ij}(x_1,\dots,x_N)=(x_i,x_j)$;\\
$\mathscr P_p(X)$&the subset of the elements $\mathscr P(X)$ with finite $p$-moment, \\
$W_p(\mu,\nu)$&the $p$-Wasserstein distance between $\mu$ and $\nu$ (see Definition \ref{def:was});\\
$\Pi(\mu,\nu)$&the set of admissible transport plans from $\mu$ to $\nu$ (see Definition \ref{def:was});\\
&endowed with the $p$-Wasserstein distance;\\
$\mathscr L^d$&the Lebesgue measure on $\mathbb R^d$;\\
$\dfrac{\nu}{\mu}$&the Radon-Nikodym derivative of the measure $\nu$ w.r.t. the measure $\mu$;\\
$\mathrm{Lip}(f)$&the Lipschitz constant of a function $f$.\\
\end{longtable}}

\begin{definition}[Pushforward measure]\label{def:push}
Given two separable metric spaces $X,Y$, $\mu\in\mathscr P(X)$, and a Borel map $r:X\to Y$, 
we define the \emph{push forward measure} $r\sharp\mu\in\mathscr P(Y)$ by 
$r\sharp\mu(B):=\mu(r^{-1}(B))$ for all Borel sets $B\subseteq Y$, or equivalently,
\[\int_X f(r(x))\,d\mu(x)=\int_Y f(y)\,d r\sharp\mu(y),\]
for every bounded (or $r\sharp\mu$-integrable) Borel function $f:Y\to\mathbb R$. 
\end{definition}

We refer to Chapter 5, Section 2 of \cite{AGS} for the main properties of the pushforward measures.

The Wasserstein distance and its basic properties are recalled below.

\begin{definition}[Wasserstein distance]\label{def:was}
Given $\mu_1,\mu_2\in\mathscr P(\mathbb R^d)$, $p\ge 1$, we define the $p$-\emph{Wasserstein distance} between $\mu_1$ and $\mu_2$
by setting
\begin{equation}\label{eq:wasserstein}
W_p(\mu_1,\mu_2):=\left(\inf\left\{\iint_{\mathbb R^d\times\mathbb R^d}|x_1-x_2|^p\,d\pi(x_1,x_2)\,:\,\pi\in \Pi(\mu_1,\mu_2)\right\}\right)^{1/p}\,,
\end{equation}
where the set of \emph{admissible transport plans} $\Pi(\mu_1,\mu_2)$ is defined by
\begin{align*}
\Pi(\mu_1,\mu_2):=&\left\{\pi\in \mathscr P(\mathbb R^d\times\mathbb R^d):\, \begin{array}{c}\pi(A_1\times \mathbb R^d)=\mu_1(A_1),\\ \pi(\mathbb R^d\times A_2)=\mu_2(A_2),\end{array}\right.\\
&\hspace{2cm}\left.\phantom{\int}\text{ for all }\mu_i\text{-measurable sets }A_i,\,i=1,2\right\}.
\end{align*}
\end{definition}

\begin{proposition}\label{prop:wassconv}
$\mathscr P_p(\mathbb R^d)$ endowed with the $p$-Wasserstein metric $W_p(\cdot,\cdot)$ is a complete separable metric space.
Moreover, given a sequence $\{\mu_n\}_{n\in\mathbb N}\subseteq\mathscr P_p(\mathbb R^d)$ and $\mu\in\mathscr P_p(\mathbb R^d)$,
we have that the following are equivalent
\begin{enumerate}
\item $\displaystyle\lim_{n\to\infty}W_p(\mu_n,\mu)=0,$
\item $\mu_n\rightharpoonup^*\mu$ and $\{\mu_n\}_{n\in\mathbb N}$ has uniformly integrable $p$-moments.
\end{enumerate}
\end{proposition}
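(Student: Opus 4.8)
The plan is to proceed in the classical order of \cite{AGS} and \cite{Vil}: first the metric axioms, then separability and completeness, and finally the convergence characterization. Positivity and symmetry of $W_p$ are immediate from the definition (symmetry follows by pushing an admissible plan through the coordinate swap $s(x_1,x_2)=(x_2,x_1)$), and $W_p(\mu,\mu)=0$ is witnessed by the plan $(\mathrm{Id}_{\mathbb R^d}\times\mathrm{Id}_{\mathbb R^d})\sharp\mu$ concentrated on the diagonal; conversely $W_p(\mu,\nu)=0$ forces any optimal plan to be supported on the diagonal, whence $\mu=\nu$. The only delicate axiom is the triangle inequality, which I would obtain via the \emph{gluing lemma}. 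Given $\mu_1,\mu_2,\mu_3\in\mathscr P_p(\mathbb R^d)$ and optimal plans $\pi_{12}\in\Pi(\mu_1,\mu_2)$, $\pi_{23}\in\Pi(\mu_2,\mu_3)$ (whose existence I get from weak$^*$ compactness of $\Pi(\mu_i,\mu_j)$ together with lower semicontinuity of the transport cost), one disintegrates both plans with respect to the shared marginal $\mu_2$ and builds $\hat\pi\in\mathscr P(\mathbb R^d\times\mathbb R^d\times\mathbb R^d)$ having $\pi_{12}$ and $\pi_{23}$ as marginals on the relevant pairs of coordinates. Projecting onto the first and third factors yields an admissible element of $\Pi(\mu_1,\mu_3)$, and Minkowski's inequality applied to $|x_1-x_3|\le|x_1-x_2|+|x_2-x_3|$ in $L^p(\hat\pi)$ gives $W_p(\mu_1,\mu_3)\le W_p(\mu_1,\mu_2)+W_p(\mu_2,\mu_3)$.

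For separability I would show that the countable family of convex combinations $\sum_i a_i\delta_{x_i}$ with rational weights $a_i$ and atoms $x_i\in\mathbb Q^d$ is $W_p$-dense: one first approximates $\mu$ by a compactly supported measure, controlling the discarded tail through finiteness of $\mathrm{m}_p(\mu)$, and then discretizes on a fine rational grid, keeping $W_p$ small. For completeness, starting from a $W_p$-Cauchy sequence $\{\mu_n\}$ I would first verify tightness: finitely many initial measures are tight as single measures on $\mathbb R^d$, and $W_p$-closeness transfers tightness to the tail, so the whole sequence is tight. Prokhorov's theorem then yields a subsequence $\mu_{n_k}\rightharpoonup^*\mu$; the uniform $p$-moment bound along the Cauchy sequence shows $\mu\in\mathscr P_p(\mathbb R^d)$, and lower semicontinuity of $W_p$ under weak$^*$ convergence combined with the Cauchy estimate gives $W_p(\mu_{n_k},\mu)\to 0$. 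A standard Cauchy argument then upgrades this to convergence of the entire sequence.

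The core of the statement, and the step I expect to be the main obstacle, is the equivalence of (1) and (2). For $(1)\Rightarrow(2)$, convergence in $W_p$ controls integration against test functions of $p$-growth, so in particular $\mu_n\rightharpoonup^*\mu$ and $\mathrm{m}_p(\mu_n)\to\mathrm{m}_p(\mu)$; weak$^*$ convergence together with convergence of the $p$-th moments then yields uniform integrability of the $p$-moments by a de la Vallée Poussin / Vitali type argument. For the converse $(2)\Rightarrow(1)$, the crucial idea is a truncation: fixing optimal plans $\pi_n\in\Pi(\mu_n,\mu)$, one splits $\int|x-y|^p\,d\pi_n$ into a contribution on a large ball, handled through weak$^*$ convergence of the marginals, and a tail contribution outside the ball, made uniformly small precisely by the uniform integrability of the $p$-moments. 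Balancing the two and letting the truncation radius diverge gives $W_p(\mu_n,\mu)\to 0$. The technical heart is exactly this interplay between weak$^*$ convergence and uniform moment control, which is what upgrades mere weak convergence to convergence in the Wasserstein metric.
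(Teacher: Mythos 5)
Your proposal is correct and follows the standard route: the paper itself gives no argument here, deferring entirely to Proposition 7.1.5 in the cited reference of Ambrosio--Gigli--Savar\'e, and your sketch (gluing lemma for the triangle inequality, rational discrete measures for separability, tightness plus Prokhorov and lower semicontinuity of $W_p$ for completeness, and the truncation/uniform-integrability interplay for the equivalence of (1) and (2)) is essentially the proof found there. Nothing further is needed.
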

\begin{proof}
See Proposition 7.1.5 in \cite{AGS}.
\end{proof}

Concerning disintegration results for measures, widely used in this paper, we refer the reader to Section 5.3 in \cite{AGS}.
The following result is Theorem 5.3.1 in \cite{AGS}.

\begin{theorem}[Disintegration]
Given a measure $\mu\in\mathscr P(\mathbb X)$ and a Borel map $r:\mathbb X\to X$, there exists a family of probability
measures $\{\mu_x\}_{x\in X}\subseteq \mathscr P(\mathbb X)$,
uniquely defined for $r\sharp \mu$-a.e. $x\in X$, such that $\mu_x(\mathbb X\setminus r^{-1}(x))=0$ for $r\sharp \mu$-a.e. $x\in X$, and
for any Borel map $\varphi:X\times Y\to [0,+\infty]$ we have
\[\int_{\mathbb X}\varphi(z)\,d\mu(z)=\int_X \left[\int_{r^{-1}(x)}\varphi(z)\,d\mu_x(z)\right]d(r\sharp \mu)(x).\]
We will write $\mu=(r\sharp \mu)\otimes \mu_x$.
If $\mathbb X=X\times Y$ and $r^{-1}(x)\subseteq\{x\}\times Y$ for all $x\in X$, we can identify each measure $\mu_x\in\mathscr P(X\times Y)$
with a measure on $Y$.
\end{theorem}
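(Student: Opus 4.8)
The plan is to build the disintegration by combining the Radon--Nikodym theorem with the Riesz representation theorem, using the separability of $\mathbb X$ to control the measure-theoretic subtleties. First I would set $\nu:=r\sharp\mu\in\mathscr P(X)$ and, for each bounded Borel $\varphi:\mathbb X\to\mathbb R$, consider the finite signed measure $B\mapsto\int_{r^{-1}(B)}\varphi\,d\mu$ on the Borel subsets $B\subseteq X$. Since $\nu(B)=0$ forces $\mu(r^{-1}(B))=0$, this measure is absolutely continuous with respect to $\nu$, so Radon--Nikodym yields a density $h_\varphi\in L^1(\nu)$, unique up to $\nu$-negligible sets, with $\int_{r^{-1}(B)}\varphi\,d\mu=\int_B h_\varphi\,d\nu$. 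The assignment $\varphi\mapsto h_\varphi$ is linear, monotone, and maps the constant $1$ to $1$, but these identities hold only $\nu$-a.e., with an exceptional set that a priori depends on $\varphi$.

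Next I would exploit separability to make this exceptional set uniform. Because $\mathbb X$ is a separable metric space, I can fix a countable subalgebra $\mathscr D\subseteq C^0_b(\mathbb X)$ containing the constants and separating points, together with the rational vector space it spans. Off a single $\nu$-negligible set $N$, the countably many a.e.\ identities expressing linearity, monotonicity, and normalization hold simultaneously for all elements of $\mathscr D$. Hence for each $x\notin N$ the functional $\varphi\mapsto h_\varphi(x)$ is positive, normalized, and $\mathbb Q$-linear on $\mathscr D$; by uniform continuity it extends to a positive linear functional of norm one on $C^0_b(\mathbb X)$, which by Riesz is represented by a Borel probability measure $\mu_x$. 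The defining relation $\int_{\mathbb X}\varphi\,d\mu_x=h_\varphi(x)$ then propagates from $\mathscr D$ to all bounded Borel $\varphi$ by a monotone class argument, and Borel measurability of each $x\mapsto h_\varphi(x)$ gives measurability of the family $x\mapsto\mu_x$.

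With $\{\mu_x\}$ constructed, I would close the proof in three steps. Integrating the defining relation in $d\nu$ with $B=X$ gives $\int_X\bigl(\int_{\mathbb X}\varphi\,d\mu_x\bigr)\,d\nu(x)=\int_{\mathbb X}\varphi\,d\mu$, i.e.\ the disintegration formula for bounded $\varphi$, and the general case $\varphi:\mathbb X\to[0,+\infty]$ follows by monotone convergence. For concentration on fibers I would test with $\varphi=\psi\circ r$ for $\psi\in C^0_b(X)$: since $h_{\psi\circ r}=\psi$ $\nu$-a.e., the defining relation reads $\int_{\mathbb X}\psi(r(z))\,d\mu_x(z)=\psi(x)$ for $\nu$-a.e.\ $x$, and letting $\psi$ range over a countable separating family forces $r\sharp\mu_x=\delta_x$, hence $\mu_x(\mathbb X\setminus r^{-1}(x))=0$. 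Uniqueness is inherited from the $\nu$-a.e.\ uniqueness in Radon--Nikodym applied along the countable family $\mathscr D$.

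I expect the main obstacle to be exactly the passage from the per-function exceptional sets to a single $\nu$-null set $N$ valid for all test functions, together with the measurability of $x\mapsto\mu_x$. This is where separability of $\mathbb X$ is indispensable: it lets me work with a countable generating algebra, establish all algebraic identities a.e.\ at once, and only then invoke Riesz. Once this uniformization is in place, the extension from $\mathscr D$ to all bounded Borel functions, and the absorption into $N$ of the null set controlling concentration on fibers, are routine monotone-class verifications.
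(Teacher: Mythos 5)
The paper offers no proof of this statement: it is quoted verbatim as Theorem 5.3.1 of \cite{AGS}, so there is no internal argument to compare yours against. Your outline follows the classical Radon--Nikodym/Riesz route by which the result is actually proved in the literature, and most of its steps are sound: the absolute continuity $\bigl(B\mapsto\int_{r^{-1}(B)}\varphi\,d\mu\bigr)\ll\nu$, the uniformization of the exceptional set over a countable algebra, the monotone-class extension, the identification $r\sharp\mu_x=\delta_x$ giving concentration on fibers, and the uniqueness via a countable separating family are all correct.

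There is, however, one genuine gap: the sentence claiming that the positive normalized functional $\varphi\mapsto h_\varphi(x)$ on $C^0_b(\mathbb X)$ ``by Riesz is represented by a Borel probability measure $\mu_x$.'' For a non-compact separable metric space $\mathbb X$, a positive linear functional of norm one on $C^0_b(\mathbb X)$ need \emph{not} be represented by a countably additive Borel measure: take $\mathbb X=\mathbb N$, so that $C^0_b(\mathbb N)=\ell^\infty$, and let the functional be a Banach limit. You must first show that for $\nu$-a.e.\ $x$ the functional is tight ($\sigma$-smooth); only then does a Riesz--Markov/Daniell-type theorem produce a genuine Borel probability measure. The standard repair uses inner regularity of $\mu$ (available by Ulam's theorem on the Polish spaces $\mathbb X=\mathbb R^d\times\Gamma_I$ actually used in this paper): choose compacts $K_n$ with $\mu(\mathbb X\setminus K_n)\le 2^{-n}$; since $\int_X h_{\chi_{\mathbb X\setminus K_n}}\,d\nu\le 2^{-n}$, Borel--Cantelli gives $h_{\chi_{\mathbb X\setminus K_n}}(x)\to 0$ for $\nu$-a.e.\ $x$, and on that full-measure set the functional is tight. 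An equivalent fix is to embed $\mathbb X$ homeomorphically into the Hilbert cube, disintegrate there (where Riesz on a compact metric space applies directly), and then check that the resulting $\mu_x$ are concentrated on the image of $\mathbb X$ for $\nu$-a.e.\ $x$. With one of these insertions your argument closes; without it, the objects you construct are a priori only finitely additive.
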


\section{Setting of the problem and preliminary results}\label{sec:rainbow}

We study an optimal control problem in the space of probability measures with a \emph{control sparsity} constraint. We develop separately two specific constraints: an $L^\infty$-time 
averaged sparsity condition (Section \ref{sec:linftymu}) and an $L^1$-time averaged sparsity constraint (Section \ref{sec:l1mu}). In both cases, the sparsity-interaction term is encoded in the cost functional to be minimized.
In the first case, at a.e. time instant we impose an upper bound on the magnitude of control to be used on the evolving mass. In the second case, this constraint is $L^1$ in time.
Both these problems are strongly motivated by applications to multi-particle systems.
\par\medskip\par
In this section we introduce and discuss some preliminary properties regarding the dynamics and the objects that will be used in Section \ref{sec:sparse} and also in Appendix \ref{sec:appendix} to describe some control sparsity constraints. 

\begin{definition}[Standing assumptions]\label{def:setting}
Let $I\subseteq\mathbb R$ be a nonempty compact interval, $U\subseteq \mathbb R^m$ be a convex compact subset, with $0\in U$, called the \emph{control set}. Let $f_i\in C^0(\mathbb R^d;\mathbb R^d)$, $i=0,\dots, m$ satisfying the following conditions
\begin{enumerate}
\item \emph{growth condition:}
\[C:=\sup_{x\in\mathbb R^d}\left\{\dfrac{1}{|x|+1}\sum_{i=0}^m|f_i(x)|\right\}<+\infty;\]
\item \emph{rank condition:} the $d\times m$ matrix $A(x):=\left(f_1(x), f_2(x),\dots f_m(x)\right)$ has rank independent of $x\in\mathbb R^d$.
\end{enumerate}
We define the set-valued map $F:\mathbb R^d\rightrightarrows\mathbb R^d$ by setting 
\[F(x):=\left\{f(x,u):=f_0(x)+A(x)\,u\,:\,\,u\in U\right\}.\]
This multifunction $F$ will govern the controlled dynamics in terms of a differential inclusion, as described in Definition \ref{def:adm-traj}.
The \emph{graph} of $F(\cdot)$ is the set
\[\mathrm{Graph}\,F:=\{(x,v)\in\mathbb R^d\times\mathbb R^d:\,v\in F(x)\}.\]
We define the \emph{control magnitude density} $\Psi:\mathbb R^d\times\mathbb R^d\to [0,+\infty]$ by 
\[\Psi(x,v)=\begin{cases}\displaystyle\inf\left\{|u|:\,u\in U,\,f(x,u)=v\right\},&\textrm{ if $v\in F(x)$},\\ +\infty,&\textrm{ otherwise.}\end{cases}\]
\end{definition}

\begin{lemma}\label{lemma:bprop}
In the framework of Definition \ref{def:setting}, the following properties hold.
\begin{enumerate}
\item The set-valued map $F(\cdot)$ is continuous w.r.t. the Hausdorff metric, and it has nonempty, compact and convex values at every $x\in\mathbb R^d$. In particular, $\mathrm{Graph}\,F$ is closed. Moreover,
$F$ satisfies the linear growth condition, i.e., there exists a constant $D>0$ such that $F(x)\subseteq \overline{B(0,D(|x|+1))}$ for every $x\in\mathbb R^d$.
\item The map $\Psi:\mathbb R^d\times\mathbb R^d\to [0,+\infty]$ satisfies the following properties
\begin{itemize}
\item[$(i)$] for all $v\in F(x)$, there exists $w\in\mathbb R^m$ such that  
\[\Psi(x,v)=|A(x)^+(v-f_0(x))+(I-A(x)^+\,A(x))\,w|,\] where $A(x)^+$ is the Moore-Penrose pseudo-inverse of $A(x)$, and $I$ is the $m\times m$ identity matrix. Moreover $\Psi(\cdot,\cdot)$ is continuous on its domain, i.e. on $\mathrm{Graph}\,F$;
\item[$(ii)$] $\Psi(\cdot,\cdot)$ is l.s.c. in $\mathbb R^d\times\mathbb R^d$;
\item[$(iii)$] $v\mapsto\Psi(x,v)$ is convex for any $x\in\mathbb R^d$.
\end{itemize}
\end{enumerate}
\end{lemma}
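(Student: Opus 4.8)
The plan is to treat the four assertions in increasing order of difficulty, isolating the continuity of $\Psi$ on $\mathrm{Graph}\,F$ as the genuine obstacle; the remaining statements then follow by soft arguments. For part (1), I would first observe that for each $x$ the set $F(x)$ is the image of the convex compact control set $U$ under the affine map $u\mapsto f_0(x)+A(x)u$, so it is automatically nonempty (it contains $f_0(x)$, since $0\in U$), convex, and compact. The linear growth bound is immediate from the growth condition: writing $v=f_0(x)+A(x)u$ with $u\in U$ and bounding $|A(x)|\le\sum_{i=1}^m|f_i(x)|$ together with $\sup_{u\in U}|u|=:M<+\infty$, one gets $|v|\le(1+M)\,C\,(|x|+1)$, so $D:=(1+M)C$ works. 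For Hausdorff continuity I would use that $(x,u)\mapsto f_0(x)+A(x)u$ is jointly continuous, hence uniformly continuous on compact sets: for $x'$ near $x$ one has $\sup_{u\in U}|f(x,u)-f(x',u)|\to 0$, which yields both inclusions defining Hausdorff convergence of $F(x')$ to $F(x)$. Closedness of $\mathrm{Graph}\,F$ then follows either from continuity with compact values, or directly: if $(x_n,v_n)\to(x,v)$ with $v_n=f_0(x_n)+A(x_n)u_n$, $u_n\in U$, compactness of $U$ lets me extract $u_n\to u\in U$ and pass to the limit to get $v=f_0(x)+A(x)u\in F(x)$.

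For part (2)(i), fix $v\in F(x)$. The admissible set $\{u\in U:A(x)u=v-f_0(x)\}$ is the intersection of the compact set $U$ with an affine subspace, hence compact and (by $v\in F(x)$) nonempty, so the continuous map $u\mapsto|u|$ attains its minimum at some $u^*\in U$; this gives existence of the minimizer defining $\Psi(x,v)$. Since $u^*$ solves the consistent linear system $A(x)u=v-f_0(x)$, the standard Moore--Penrose parametrization of the solution set writes $u^*=A(x)^+(v-f_0(x))+(I-A(x)^+A(x))\,w$ for a suitable $w\in\mathbb R^m$, which is exactly the claimed identity. For the continuity of $\Psi$ on $\mathrm{Graph}\,F$ I would split into lower and upper semicontinuity. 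Lower semicontinuity along the graph is soft: given $(x_n,v_n)\to(x,v)$ in $\mathrm{Graph}\,F$ and minimizers $u_n\in U$ with $|u_n|=\Psi(x_n,v_n)$, a subsequence converges to some $\bar u\in U$ with $A(x)\bar u=v-f_0(x)$, whence $\Psi(x,v)\le|\bar u|=\liminf_n\Psi(x_n,v_n)$.

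The upper semicontinuity $\limsup_n\Psi(x_n,v_n)\le\Psi(x,v)$ is the main obstacle, and it is here that the \emph{rank condition} enters. The key analytic input is that, since $A(\cdot)$ is continuous and has constant rank, the pseudo-inverse $x\mapsto A(x)^+$ is continuous (pseudo-inversion is continuous on the manifold of fixed-rank matrices). Given an optimal $u^*$ for $(x,v)$ I would construct competitors $u_n=u^*+A(x_n)^+\big(b_n-A(x_n)u^*\big)$, with $b_n:=v_n-f_0(x_n)$; using $A A^+A=A$ and $b_n\in\mathrm{range}\,A(x_n)$ (because $v_n\in F(x_n)$) one checks $A(x_n)u_n=b_n$, while $A(x_n)^+\to A(x)^+$ and $b_n\to v-f_0(x)=A(x)u^*$ give $u_n\to u^*$, hence $|u_n|\to|u^*|=\Psi(x,v)$. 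The delicate point — the real content of the lemma — is that these corrected controls need not lie in $U$ when $u^*$ sits on $\partial U$; I expect to repair this by exploiting convexity of $U$ together with $0\in U$, replacing $u^*$ by a slightly shrunk $t_n u^*$ with $t_n\uparrow 1$ and correcting as above, so that the competitors are pushed inside $U$ while still converging to $u^*$ in norm. Making this shrink-and-correct scheme produce genuinely admissible $u_n\in U$ with $\limsup_n|u_n|\le|u^*|$ is the step I anticipate to be the most technical.

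Once continuity of $\Psi$ on the closed graph is established, (ii) and (iii) are quick. For (ii), at a point $(x,v)\in\mathrm{Graph}\,F$ lower semicontinuity follows from continuity along the graph together with $\Psi\equiv+\infty$ off the graph; at a point $(x,v)\notin\mathrm{Graph}\,F$, closedness of $\mathrm{Graph}\,F$ yields a neighbourhood on which $\Psi\equiv+\infty$, so $\liminf\Psi=+\infty=\Psi(x,v)$. For (iii), fix $x$ and $v_1,v_2\in F(x)$ with minimizers $u_1,u_2\in U$; for $\lambda\in[0,1]$ the control $\lambda u_1+(1-\lambda)u_2$ lies in $U$ by convexity and satisfies $A(x)\big(\lambda u_1+(1-\lambda)u_2\big)=\lambda v_1+(1-\lambda)v_2-f_0(x)$, so it is admissible for $\lambda v_1+(1-\lambda)v_2$ and gives $\Psi(x,\lambda v_1+(1-\lambda)v_2)\le\lambda\Psi(x,v_1)+(1-\lambda)\Psi(x,v_2)$; outside $F(x)$ the values are $+\infty$ and the inequality holds trivially.
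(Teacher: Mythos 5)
Most of your proposal is sound and, where it overlaps, follows essentially the same route as the paper's (much terser) argument: part (1) via the affine image of the compact convex set $U$; the existence of a norm-minimal control and its Moore--Penrose parametrization in (2)(i); deducing (ii) from continuity on the closed graph; and the convexity proof in (iii) via admissibility of $\lambda u_1+(1-\lambda)u_2$ — all of this is correct and matches the paper. Your lower-semicontinuity argument along the graph (extract convergent minimizers) is also fine.

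The genuine gap is exactly where you flagged it: the upper-semicontinuity half of the continuity of $\Psi$ on $\mathrm{Graph}\,F$. The shrink-and-correct scheme cannot be completed under the stated hypotheses, because the correction $A(x_n)^+\bigl(b_n-A(x_n)\,t_nu^*\bigr)$ lives in $\mathrm{range}\,A(x_n)^+=(\ker A(x_n))^\perp$, a direction unrelated to the geometry of $U$ near $t_nu^*$; shrinking towards $0$ only buys room when $0\in\mathrm{int}\,U$ (then $t_nu^*$ is at distance at least $(1-t_n)\,\mathrm{dist}(0,\partial U)$ from $\mathbb R^m\setminus U$, and taking $t_n\uparrow1$ slowly enough your argument does close). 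With only $0\in U$, as in Definition \ref{def:setting}, not only does the construction fail: upper semicontinuity itself can fail. Take $d=2$, $m=3$, $U=[0,1]^3$, $f_0\equiv0$, $f_1(x)=(0,-x_1)$, $f_2\equiv(1,0)$, $f_3\equiv(0,1)$, so $A(x)$ has constant rank $2$ and $\ker A(x)=\mathrm{span}\{(1,0,x_1)\}$. For $x^n=(1/n,0)$ and $v^n=(1/2,-1/n)\in F(x^n)$ the only admissible control is $(1,1/2,0)$, so $\Psi(x^n,v^n)=\sqrt5/2$, while $(x^n,v^n)\to\bigl((0,0),(1/2,0)\bigr)$ in $\mathrm{Graph}\,F$ and $\Psi\bigl((0,0),(1/2,0)\bigr)=1/2$, the solution set there being the whole segment $[0,1]\times\{1/2\}\times\{0\}$. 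The same difficulty is hidden in the paper's own proof: the authors pass from the continuous multifunction $M(x,v)$ (the full Moore--Penrose solution set) to $Q(x,v)=M(x,v)\cap U$ and assert continuity of $Q$ before invoking the minimal-selection theorem, but intersection with a fixed convex set preserves upper, not lower, semicontinuity, and lower semicontinuity of $Q$ is precisely what that theorem needs — and precisely what the example above violates. So your instinct that this is the real content of the lemma is correct; the repair you sketch works only under an additional hypothesis such as $0\in\mathrm{int}\,U$ (in which case it is a legitimate, more self-contained alternative to the paper's citation), and without such a hypothesis no argument can succeed.
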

\begin{proof}
Item (1) follows directly from the definition, recalling the continuity of $f_i$, $i=1,\dots, m$, the compactness of $U$ and the standing assumptions. We can take $D=C\cdot R_U$, where we define $R_U:=\max\{|u|:u\in U\}$.

We prove $(i)$. Let $x\in\mathbb R^d$ be fixed, $v\in F(x)$ and let us denote with $A(x)^+$ the Moore-Penrose pseudo-inverse of $A(x)$, which exists and is unique (see \cite{Pen}). 
Then, by hypothesis, there exists a solution $u$ of $A(x)\,u=v-f_0(x)$ and by pseudo-inverse properties, we can characterize any such a solution by $u=A(x)^+\,(v-f_0(x))+(I-A(x)^+\,A(x))\,w$, $w\in\mathbb R^m$.

Let us define the map $g:\mathbb R^d\times\mathbb R^d\times\mathbb R^m\to\mathbb R^m$, $g(x,v,w):=A(x)^+\,(v-f_0(x))+(I-A(x)^+\,A(x))\,w$. 
We have that $g$ is continuous, indeed by the rank condition in Definition \ref{def:setting} and pseudo-inverses properties, since $x\mapsto A(x)$ is continuous, so is $x\mapsto A(x)^+$.

Thus, by Proposition 1.4.14 in \cite{AuF}, we have that the multifunction $M:\mathbb R^d\times\mathbb R^d\rightrightarrows\mathbb R^m$, defined by $M(x,v):=\{g(x,v,w)\,:\,w\in \overline{B_{R_U}(0)}\}$ is continuous.
Hence, so is the set-valued function $Q:\mathbb R^d\times\mathbb R^d\rightrightarrows\mathbb R^m$, $Q(x,v):=M(x,v)\cap U$. By Corollary 9.3.3 in \cite{AuF}, we get the continuity of the minimal norm selection of $Q$, thus the continuity of $\Psi$ on its domain, i.e. on $\mathrm{Graph}\,F$, since
\[\Psi(x,v)=\min_{z\in Q(x,v)}|z|,\]
when $(x,v)\in\mathrm{Graph}\,F$.

Finally, $(ii)$ follows by the continuity of $\Psi$ on $\mathrm{dom}\,\Psi\equiv\mathrm{Graph}\,F$ and closedness of $\mathrm{Graph}\,F$.

We pass now to the proof of $(iii)$. Let $(x,v_j)\in\mathbb R^d\times\mathbb R^d$, $j=1,2$. If $(x,v_j)\notin \mathrm{Graph}\,F$, for some $j=1,2$, the convexity inequality is trivially satisfied, since $\Psi$ takes
the value $+\infty$, so we assume that $(x,v_j)\in \mathrm{Graph}\,F$, $j=1,2$. 
We notice that, since for all $u_j\in Q(x,v_j)$, $j=1,2$, we have $v_j-f_0(x)=A(x)\,u_j$,
then $\lambda u_1+(1-\lambda)u_2\in Q(x,\lambda v_1+(1-\lambda)v_2)$ for all $\lambda\in [0,1]$.
Recalling the triangular inequality, we have for all $u_j\in Q(x,v_j)$, $j=1,2$,
\begin{align*}
\Psi(x,\lambda v_1+(1-\lambda)v_2)=&\min\left\{|u|:\, u\in Q(x,\lambda v_1+(1-\lambda)v_2)\right\}\\ 
\le&|\lambda u_1+(1-\lambda)u_2|\le \lambda|u_1|+(1-\lambda)|u_2|,
\end{align*}
By taking the minimum on $u_j\in Q(x,v_j)$, $j=1,2$, we obtain that $\Psi(x,\cdot)$ is convex.
\end{proof}

Following the same line as in \cites{Cav,CMNP,CMO,CM,MQ}, we define the set of \emph{admissible trajectories} as follows.

\begin{definition}[Admissible trajectories]\label{def:adm-traj}
Let $I\subseteq\mathbb R$ be a compact and nonempty interval. In the setting of Definition~\ref{def:setting}, we say that $(\boldsymbol\mu,\boldsymbol\nu)$ is an \emph{admissible trajectory} defined on $I$, and we write $(\boldsymbol\mu,\boldsymbol\nu)\in\mathcal A_I$, if the time-depending Borel probability measure $\boldsymbol\mu=\{\mu_t\}_{t\in I}\subseteq\mathscr P(\mathbb R^d)$ and the time-depending Borel vector-valued measure $\boldsymbol\nu=\{\nu_t\}_{t\in I}\subseteq\mathscr M(\mathbb R^d;\mathbb R^d)$ satisfy the following properties
\begin{itemize}
\item[(A1)] \emph{continuity equation}: we have $\partial_t\mu_t+\mathrm{div}\,\nu_t=0$ in the sense of distributions in $I\times \mathbb R^d$;
\item[(A2)] \emph{velocity constraint}: $|\nu_t|\ll \mu_t$ for a.e. $t\in I$ and the Radon-Nikodym derivative satisfies $\dfrac{\nu_t}{\mu_t}(x)\in F(x)$, 
for $\mu_t$-a.e. $x\in\mathbb R^d$ and a.e. $t\in I$. Equivalently, we ask
\[\int_I \mathcal J_F(\mu_t,\nu_t)\,dt<+\infty,\]
where $\mathcal J_F:\mathscr P(\mathbb R^d)\times\mathscr M(\mathbb R^d;\mathbb R^d)\to [0,+\infty]$ is defined by
\begin{align*}
\mathcal J_F(\mu,E)&:=\begin{cases}
\displaystyle\int_{\mathbb R^d} I_{F(x)}\left(\dfrac{E}{\mu}(x)\right)\,d\mu(x),
&\textrm{if }|E|\ll \mu,\\ \\
+\infty,&\textrm{otherwise}.\end{cases}
\end{align*}
\end{itemize}
Given $\bar\mu\in\mathscr P(\mathbb R^d)$, we say that $(\boldsymbol\mu,\boldsymbol\nu)\in\mathcal A_I$ is an admissible trajectory \emph{starting from $\bar\mu$} if $\mu_{|t=\min I}=\bar\mu$, and we write $(\boldsymbol\mu,\boldsymbol\nu)\in\mathcal A_I(\bar\mu)$.
It can be proved (see \cite{AGS}) that every $\boldsymbol\mu$ such that $(\boldsymbol\mu,\boldsymbol\nu)\in\mathcal A_I$ for some $\boldsymbol\nu$ admits a narrowly continuous representative. Thus from now on we will always refer to it.
\end{definition}

\begin{remark}\label{rem:basic estim}
Let $\bar\mu\in\mathscr P_p(\mathbb R^d)$ with $p\ge 1$, and $(\boldsymbol\mu,\boldsymbol\nu)\in\mathcal A_I(\bar\mu)$. 
Then, due to the growth assumption on $F(\cdot)$, it is possible to bound the moments of $\mu_t$ in terms of the moments of $\mu_0$ (see Proposition \ref{prop:moments}), 
and hence we have $\boldsymbol\mu\subseteq\mathscr P_p(\mathbb R^d)$.
\end{remark}

Recalling the Superposition Principle (Theorem 8.2.1 in \cite{AGS}) and its extension to differential inclusions provided in Theorem 1 in \cite{CMPbulgaro}, we refer to the following.

\begin{definition}[Probabilistic representations]\label{def:representation}
Let $\boldsymbol\mu=\{\mu_t\}_{t\in I}\subseteq\mathscr P_p(\mathbb R^d)$ be an absolutely continuous trajectory and $\boldsymbol\nu=\{\nu_t\}_{t\in I}$ a family of Borel vector-valued measures such that $\partial_t\mu_t+\mathrm{div}\,\nu_t=0$, $t\in I$. We say that a probability measure $\boldsymbol\eta\in\mathscr P(\mathbb R^d\times\Gamma_I)$
\begin{enumerate}
\item\label{item1:repres} \emph{represents the pair $(\boldsymbol\mu,\boldsymbol\nu)$}, if $\boldsymbol\eta$ is concentrated on the couples $(x,\gamma)\in \mathbb R^d\times\Gamma_I$ where $\gamma$ satisfies $\dot\gamma(t)=\displaystyle\frac{\nu_t}{\mu_t}(\gamma(t))$, $\gamma(\min I)=x$, and $\mu_t=e_t\sharp\boldsymbol\eta$ for all $t\in I$;
\item\label{item2:repres} \emph{represents $\boldsymbol\mu$}, if $\boldsymbol\eta$ is concentrated on the couples $(x,\gamma)\in \mathbb R^d\times\Gamma_I$ where $\gamma$ satisfies $\dot\gamma(t)\in F(\gamma(t))$, $\gamma(\min I)=x$, and $\mu_t=e_t\sharp\boldsymbol\eta$ for all $t\in I$.
\end{enumerate}
\end{definition}

We notice that in general if $(\boldsymbol\mu,\boldsymbol\nu)\in\mathcal A_I$, then $\boldsymbol\mu$ can have more than a representation (see \cite{CMPbulgaro} for an example of this situation).

Recalling Theorem 8.2.1 in \cite{AGS}, and Theorem 1 in \cite{CMPbulgaro}, we have that
\begin{lemma}[Equivalence]\label{lemma:equiv-def}
In the setting of Definition~\ref{def:setting}.
\begin{enumerate}
\item\label{item1:equiv-def} Every $(\boldsymbol\mu,\boldsymbol\nu)\in\mathcal A_I$ admits a representation $\boldsymbol\eta\in\mathscr P(\mathbb R^d\times\Gamma_I)$ according to Definition \ref{def:representation}\eqref{item1:repres}.
\item\label{item2:equiv-def} Every $\boldsymbol\eta\in\mathscr P(\mathbb R^d\times\Gamma_I)$ concentrated on the couples $(x,\gamma)\in \mathbb R^d\times\Gamma_I$, where $\dot\gamma(t)\in F(\gamma(t))$ and $\gamma(0)=x$,
represents an admissible trajectory $(\boldsymbol\mu,\boldsymbol\nu)\in\mathcal A_I$,  with $\boldsymbol\nu=\{\nu_t\}_{t\in I}$ with $\nu_t=v_t\mu_t$ and
\[v_t(y)=\int_{e_t^{-1}(y)}\dot\gamma(t)\,d\eta_{t,y}(x,\gamma),\]
for a.e. $t\in I$ and $\mu_t$-a.e. $x\in\mathbb R^d$, where the Borel family of measures $\{\eta_{t,y}\}_{y\in\mathbb R^d}\subseteq\mathscr P(\mathbb R^d\times\Gamma_I)$ is the disintegration of $\boldsymbol\eta$ w.r.t. 
the evaluation operator $e_t$.
\end{enumerate}
\end{lemma}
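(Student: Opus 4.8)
The plan is to read both statements off the Superposition Principle (Theorem 8.2.1 in \cite{AGS}) together with its refinement to differential inclusions (Theorem 1 in \cite{CMPbulgaro}): these results are precisely the bridge between the Eulerian formulation of admissible trajectories in Definition \ref{def:adm-traj} and the Lagrangian one in Definition \ref{def:representation}. The two items are the two directions of this equivalence, so the work reduces to checking the hypotheses of the cited theorems and reconstructing the barycentric velocity field in the second direction.

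For item \eqref{item1:equiv-def}, I would first verify the integrability hypothesis required to apply the Superposition Principle. Setting $v_t:=\nu_t/\mu_t$, the velocity constraint (A2) gives $v_t(x)\in F(x)$ for $\mu_t$-a.e.\ $x$ and a.e.\ $t$, so the linear growth of $F$ from Lemma \ref{lemma:bprop}(1) yields the pointwise bound $|v_t(x)|\le D(|x|+1)$. Integrating this against $\mu_t$ over the compact interval $I$ and using the moment control of Remark \ref{rem:basic estim} (coming from Proposition \ref{prop:moments}) gives $\int_I\int_{\mathbb R^d}|v_t(x)|\,d\mu_t\,dt<+\infty$. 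The Superposition Principle then produces $\boldsymbol\eta\in\mathscr P(\mathbb R^d\times\Gamma_I)$ concentrated on characteristics solving $\dot\gamma(t)=v_t(\gamma(t))$ with $\gamma(\min I)=x$, and satisfying $\mu_t=e_t\sharp\boldsymbol\eta$. Since $v_t(\gamma(t))\in F(\gamma(t))$, these curves automatically solve the differential inclusion, so $\boldsymbol\eta$ represents $(\boldsymbol\mu,\boldsymbol\nu)$ in the sense of Definition \ref{def:representation}\eqref{item1:repres}.

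For item \eqref{item2:equiv-def}, I start from $\boldsymbol\eta$ concentrated on absolutely continuous curves with $\dot\gamma(t)\in F(\gamma(t))$ and $\gamma(\min I)=x$, and I set $\mu_t:=e_t\sharp\boldsymbol\eta$. Testing against $\varphi\in C_c^\infty(\mathbb R^d)$ and differentiating $\int\varphi(\gamma(t))\,d\boldsymbol\eta$ in $t$ (legitimate by dominated convergence, using $|\dot\gamma(t)|\le D(|\gamma(t)|+1)$ and the moment bounds), I get $\frac{d}{dt}\int\varphi\,d\mu_t=\int\nabla\varphi(\gamma(t))\cdot\dot\gamma(t)\,d\boldsymbol\eta$. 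Disintegrating $\boldsymbol\eta$ with respect to $e_t$ rewrites the right-hand side as $\int\nabla\varphi(y)\cdot v_t(y)\,d\mu_t(y)$ with $v_t$ given exactly by the stated formula, which is the weak form of the continuity equation (A1) for $\nu_t:=v_t\mu_t$; the relation $|\nu_t|\ll\mu_t$ then holds by construction.

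The main obstacle is the velocity constraint (A2) for the reconstructed field. The plan is: for a.e.\ $t$ and $\mu_t$-a.e.\ $y$, the disintegrated measure $\eta_{t,y}$ is concentrated on pairs $(x,\gamma)$ with $\gamma(t)=y$, and $\boldsymbol\eta$-almost every such curve satisfies $\dot\gamma(t)\in F(\gamma(t))=F(y)$; hence $v_t(y)=\int_{e_t^{-1}(y)}\dot\gamma(t)\,d\eta_{t,y}$ is the barycenter of a probability measure carried by $F(y)$. Since $F(y)$ is compact and convex by Lemma \ref{lemma:bprop}(1), this barycenter lies in $F(y)$, giving $v_t(y)\in F(y)$ as required. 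The delicate point is not the convexity step itself but the joint measurability in $(t,y)$ and the Fubini argument that converts the a.e.-in-$t$, $\boldsymbol\eta$-a.e.-in-curve statement $\dot\gamma(t)\in F(\gamma(t))$ into an a.e.-in-$t$, $\mu_t$-a.e.-in-$y$ statement; this measure-theoretic bookkeeping is exactly what the superposition results of \cite{AGS} and \cite{CMPbulgaro} supply, so I would lean on them to discharge it.
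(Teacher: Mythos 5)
Your proposal follows exactly the route the paper takes: the paper offers no written proof of this lemma beyond the phrase ``Recalling Theorem 8.2.1 in [AGS], and Theorem 1 in [CMPbulgaro], we have that\ldots'', i.e.\ it treats both items as direct consequences of the Superposition Principle and its extension to differential inclusions, which is precisely your plan. Your verification of the integrability hypothesis for item (1) and the barycenter/convexity argument for item (2) correctly fill in the details that the cited theorems require, so the proposal is a faithful (and more explicit) version of the paper's argument.
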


\begin{remark}
Lemma \ref{lemma:equiv-def} allows us to consider equivalently an admissible trajectory defined as in Definition \ref{def:adm-traj}, or a probability measure $\boldsymbol\eta\in\mathscr P(\mathbb R^d\times\Gamma_T)$
satisfying the property of Lemma \ref{lemma:equiv-def} (2). 
\end{remark}

\begin{definition}\label{def:reprSetA}
Given $\bar\mu\in\mathscr P(\mathbb R^d)$, we define
\begin{equation*}
\mathcal R_I^{\mathcal A}(\bar\mu):=\left\{\boldsymbol\eta\in\mathscr P(\mathbb R^d\times\Gamma_I)\,:\,\exists (\boldsymbol\mu,\boldsymbol\nu)\in\mathcal A_I(\bar\mu) \textrm{ s.t. } \boldsymbol\eta \textrm{ represents }(\boldsymbol\mu,\boldsymbol\nu)\right\}.
\end{equation*}
\end{definition}

 \begin{remark}
 We stress that in general
 \begin{equation*}
 \mathcal R_I^{\mathcal A}(\bar\mu)\subsetneq\left\{\boldsymbol\eta\in\mathscr P(\mathbb R^d\times\Gamma_I)\,:\,\exists (\boldsymbol\mu,\boldsymbol\nu)\in\mathcal A_I(\bar\mu) \textrm{ s.t. }
 \boldsymbol\eta \textrm{ represents }\boldsymbol\mu\right\}.
 \end{equation*}
 Indeed, in the left-hand set we are requiring $\boldsymbol\eta$ to be a representation for the pair $(\boldsymbol\mu,\boldsymbol\nu)$ as in Definition~\ref{def:representation}(\ref{item1:repres}), while in the right-hand side we are exploiting the various possibilities for the construction of a representation recalled in Definition~\ref{def:representation}(\ref{item2:repres}). We borrow the following clarifying example from \cite{CMPbulgaro}.
 \end{remark}
 
\begin{example}
In $\mathbb R^2$, let 
\begin{itemize}
\item[$\cdot$] $\mathscr A=\{\gamma_{x,y}(\cdot)\}_{(x,y)\in\mathbb R^2}\subseteq AC([0,2])$ where $\gamma_{x,y}(t)=(x+t,y-t\,\mathrm{sgn}\,y)$ for any $(x,y)\in\mathbb R^2$, $t\in [0,2]$, with  $\mathrm{sgn}(0)=0$;
\item[$\cdot$] $F:\mathbb R^2\rightrightarrows\mathbb R^2$, $F(x,y)\equiv [-1,1]\times[-1,1]$ for all $(x,y)\in\mathbb R^2$;
\item[$\cdot$] $\mu_0=\dfrac12\delta_0\otimes\mathscr L^1_{|[-1,1]}\in\mathscr P(\mathbb R^2)$, $\boldsymbol\eta=\mu_0\otimes \delta_{\gamma_{x,y}}\in\mathscr P(\mathbb R^2\times\Gamma_2)$, $\boldsymbol\mu=\{\mu_t\}_{t\in[0,2]}$ with $\mu_t=e_t\sharp\boldsymbol\eta$;
\item[$\cdot$] $Q$ be the open square of vertice $\left\{(0,0), (1,0), (1/2,\pm 1/2)\right\}$. 
\end{itemize}
By construction we have that
\begin{itemize}
\item[$\cdot$] $F$ is in the form of Definition \ref{def:setting} (by taking for instance as $f_0$ the null function, $A(x)$ be the $2\times 2$ identity matrix and $\mathbb R^d\ni U=[-1,1]\times[-1,1]$) and $\dot\gamma(t)\in F(\gamma(t))$ for all $\gamma\in\mathscr A$ and $t\in]0,2[$. 
\item[$\cdot$] $\boldsymbol\mu$ is an admissible trajectory and we denote with $\boldsymbol\nu=\{\nu_t\}_{t\in [0,2]}$ its driving family of Borel vector-valued measures.
\end{itemize}
Denoted by $v_t=\dfrac{\nu_t}{\mu_t}$ the mean vector field, this implies $v_t(x,y)=(1,0)$ for all $(x,y)\in Q\setminus(\mathbb R\times \{0\})$ and $t=x$.
Now, consider the associated characteristics $\dot{\tilde{\gamma}}_y(t)=v_t(\tilde\gamma_y(t))$, $\tilde\gamma_y(0)=(0,y)$, $y\in[-1,1]$, and let us build $\tilde{\boldsymbol\eta}=\mu_0\otimes\delta_{\tilde\gamma_y}\in\mathscr P(\mathbb R^d\times\Gamma_{[0,2]})$.\\
We notice that, by construction, $\tilde{\boldsymbol\eta}$ represents $(\boldsymbol\mu,\boldsymbol\nu)$ (in particular it represents $\boldsymbol\mu$), while $\boldsymbol\eta$ represents $\boldsymbol\mu$ but not the pair $(\boldsymbol\mu,\boldsymbol\nu)$, since it is not constructed on the mean vector field $\displaystyle\frac{\nu_t}{\mu_t}$.
\end{example}

In Theorem 3 in \cite{CMNP} the authors give sufficient conditions providing compactness of the set $\mathcal A_I(\bar\mu)$ w.r.t the uniform convergence of curves in $W_p$, with $\bar\mu\in\mathscr P_p(\mathbb R^d)$, $p\ge1$. While Proposition 1 in \cite{CMPbulgaro} states the compactness of the set $\mathcal R_I^{\mathcal A}(\bar\mu)$.

\medskip

\begin{lemma}[Norm-minimal control density]\label{lemma:MinContr}
Given $(\boldsymbol\mu,\boldsymbol\nu)\in\mathcal A_I$, there exists a Borel map $u:I\times\mathbb R^d\to U$, defined $\mu_t$-a.e. $x\in\mathbb R^d$ and a.e. $t\in I$, such that $\boldsymbol\nu=\{\nu_t=v_t\mu_t\}_{t\in I}$, with
\begin{align*}
&v_t(x)=f_0(x)+A(x)\,u(t,x),\quad\textrm{for }\mu_t\textrm{-a.e. }x\textrm{ and a.e. }t\in I,\\
&\int_{\mathbb R^d}\Psi\left(x,\dfrac{\nu_t}{\mu_t}(x)\right)\,d\mu_t(x)=\int_{\mathbb R^d}|u(t,x)|\,d\mu_t(x),\quad\textrm{for a.e. }t\in I.
\end{align*}
We will call $u(t,x)$ the \emph{norm-minimal control density} associated with the admissible trajectory $(\boldsymbol\mu,\boldsymbol\nu)$.
\end{lemma}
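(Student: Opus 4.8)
The plan is to reduce the statement to a \emph{pointwise} minimal-norm selection of the control, using the continuity of such a selection already obtained inside the proof of Lemma~\ref{lemma:bprop}; the only genuinely delicate point is to produce a \emph{jointly} Borel representative of the mean velocity field $(t,x)\mapsto\frac{\nu_t}{\mu_t}(x)$, since a priori the Radon--Nikodym derivative is defined only $\mu_t$-a.e.\ separately for each fixed $t$.

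To obtain joint measurability, I would introduce on $I\times\mathbb R^d$ the finite scalar measure $\bar\mu$ given by $d\bar\mu(t,x)=dt\,d\mu_t(x)$ (of total mass $|I|$) and the vector-valued measure $\bar\nu$ given by $d\bar\nu(t,x)=dt\,d\nu_t(x)$, both well defined by the narrow continuity of $t\mapsto\mu_t$; the map $\bar\nu$ is $\sigma$-finite, since on each slab $I\times B(0,R)$ one has $|\nu_t|(B(0,R))\le D(R+1)$ by the linear growth in Lemma~\ref{lemma:bprop}(1), whence $|\bar\nu|(I\times B(0,R))<+\infty$. Using (A2), for any Borel $E\subseteq I\times\mathbb R^d$ with $\bar\mu(E)=0$ one has $\mu_t(E_t)=0$ for a.e.\ $t$, where $E_t:=\{x:(t,x)\in E\}$; since $|\nu_t|\ll\mu_t$ for a.e.\ $t$, this yields $|\nu_t|(E_t)=0$ for a.e.\ $t$ and hence $|\bar\nu|(E)=\int_I|\nu_t|(E_t)\,dt=0$. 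Thus $\bar\nu\ll\bar\mu$, and the Radon--Nikodym theorem provides a jointly Borel map $V:=\frac{\bar\nu}{\bar\mu}:I\times\mathbb R^d\to\mathbb R^d$. By a Fubini/disintegration argument the identity $d\bar\nu=V\,d\bar\mu$ forces $V(t,\cdot)=\frac{\nu_t}{\mu_t}(\cdot)$ for a.e.\ $t$ and $\mu_t$-a.e.\ $x$, so that by (A2) we have $V(t,x)\in F(x)$ for $\bar\mu$-a.e.\ $(t,x)$.

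The selection is then immediate. In the proof of Lemma~\ref{lemma:bprop} it is established that, for $(x,v)\in\mathrm{Graph}\,F$, the set $Q(x,v)=\{u\in U:\,f(x,u)=v\}$ is nonempty, convex and compact, and $\Psi(x,v)=\min_{z\in Q(x,v)}|z|$; since the element of minimal norm in a nonempty convex compact set is the metric projection of the origin, it is unique, and Corollary~9.3.3 in \cite{AuF} gives that the selection $\sigma:\mathrm{Graph}\,F\to U$ assigning to $(x,v)$ this minimal-norm element is continuous, with $f(x,\sigma(x,v))=v$ and $|\sigma(x,v)|=\Psi(x,v)$. I would then set
\[
u(t,x):=\begin{cases}\sigma(x,V(t,x)),&\textrm{if }V(t,x)\in F(x),\\ 0,&\textrm{otherwise},\end{cases}
\]
which takes values in $U$ (as $0\in U$) and is Borel, being the composition of the jointly Borel map $(t,x)\mapsto(x,V(t,x))$ with the continuous $\sigma$ on the $\bar\mu$-conull Borel set $\{V(t,x)\in F(x)\}$.

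Finally, I would check the two claimed identities. For $\bar\mu$-a.e.\ $(t,x)$ one has $f(x,u(t,x))=V(t,x)=\frac{\nu_t}{\mu_t}(x)$, i.e.\ $v_t(x)=f_0(x)+A(x)\,u(t,x)$ for a.e.\ $t$ and $\mu_t$-a.e.\ $x$; moreover $|u(t,x)|=\Psi(x,V(t,x))=\Psi\big(x,\frac{\nu_t}{\mu_t}(x)\big)$ pointwise, and integrating against $\mu_t$ yields the stated equality of integrals for a.e.\ $t$, the modification of $u$ on the set $\{V(t,x)\notin F(x)\}$ being harmless since that set has $\mu_t$-measure zero for a.e.\ $t$. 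The main obstacle is precisely the joint-measurability step of the second paragraph; once $V$ is available, the construction rests entirely on the continuity of the minimal-norm selection already furnished by Lemma~\ref{lemma:bprop}.
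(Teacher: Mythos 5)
Your proof is correct and follows the same core strategy as the paper: apply, pointwise in $(t,x)$, the unique minimal-norm solution of $A(x)u=v_t(x)-f_0(x)$ furnished by Lemma~\ref{lemma:bprop}, so that $|u(t,x)|=\Psi(x,v_t(x))$ and the integral identity follows at once. The difference is in how the Borel regularity of $u$ is justified. The paper writes $u(t,x)=A(x)^+(v_t(x)-f_0(x))+(I-A(x)^+A(x))\,w$ and simply asserts that $u$ is ``by construction'' a well-defined Borel map, leaving implicit both the measurable dependence of $w$ on $(t,x)$ and, more importantly, the existence of a \emph{jointly} Borel representative of $(t,x)\mapsto\frac{\nu_t}{\mu_t}(x)$, which a priori is only defined $\mu_t$-a.e.\ for each fixed $t$. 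You address exactly these two points: the Radon--Nikodym argument for $\bar\nu\ll\bar\mu$ on $I\times\mathbb R^d$ (with the Fubini identification $V(t,\cdot)=\frac{\nu_t}{\mu_t}$ for a.e.\ $t$) produces the jointly Borel velocity field, and composing it with the continuous minimal-norm selection $\sigma$ on $\mathrm{Graph}\,F$ --- the same selection the paper obtains from Corollary~9.3.3 in \cite{AuF} inside the proof of Lemma~\ref{lemma:bprop} --- yields Borel measurability of $u$ without having to track $w$ at all. Your version is therefore a more complete rendering of the paper's own argument rather than a different proof; the one identification worth making explicit is that the set $Q(x,v)=M(x,v)\cap U$ of Lemma~\ref{lemma:bprop} coincides with $\{u\in U:\,f(x,u)=v\}$ for $(x,v)\in\mathrm{Graph}\,F$ (which holds because any $u\in U$ solving $A(x)u=v-f_0(x)$ satisfies $g(x,v,u)=u$ with $u\in\overline{B_{R_U}(0)}$), so that the minimal-norm element of $Q(x,v)$ really computes $\Psi(x,v)$.
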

\begin{proof}
By assumption, there exists $v_t\in L^1_{\mu_t}$ such that $\boldsymbol{\nu}=\{\nu_t=v_t\mu_t\}_{t\in I}$, $v_t(x)\in F(x)$ for $\mu_t$-a.e. $x\in \mathbb R^d$ and a.e. $t\in I$. Then, by Lemma \ref{lemma:bprop} for $\mu_t$-a.e. $x$ and a.e. $t$ there exists a unique minimum-norm solution $u(t,x)\in U$ for $v_t(x)=f_0(x)+A(x)\,u$. It is defined by $u(t,x):= A(x)^+\,(v_t(x)-f_0(x))+(I-A(x)^+\,A(x))\,w$, for some $w\in\mathbb R^m$ and it satisfies $\Psi(x,v_t(x))=|u(t,x)|$ for $\mu_t$-a.e. $x$ and a.e. $t$. By construction, $u:I\times\mathbb R^d\to U$ is a well-defined Borel map for $\mu_t$-a.e. $x$ and a.e. $t$.
\end{proof}

\medskip

The following result allows us to prove the existence of an admissible trajectory with given (admissible) initial velocity and satisfying further properties which will be used later on in Section \ref{sec:HJB} to provide an HJB result in our framework.

\begin{lemma}\label{Lemma:initialVel1}
Let $\mu_0\in\mathscr P_2(\mathbb R^d)$, $T>0$, $v_0:\mathbb R^d\to\mathbb R^d$ be a Borel map which is also a $L^2_{\mu_0}$-selection of $F(\cdot)$.
Then for all $\beta>0$, there exists an admissible pair $(\boldsymbol\mu,\boldsymbol\nu)\in\mathcal A_{[0,T]}(\mu_0)$ and a representation $\boldsymbol\eta$ for the pair $(\boldsymbol\mu,\boldsymbol\nu)$ in the sense of Definition~\ref{def:representation}(\ref{item1:repres})
satisfying 
\begin{enumerate}
\item for all $p\in L^2_{\mu_0}(\mathbb R^d)$ 
\[\lim_{t\to0^+}\int_{\mathbb R^d\times\Gamma_{[0,T]}} \langle p\circ e_0(x,\gamma) ,\frac{e_t(x,\gamma)-e_0(x,\gamma)}{t}\rangle\,d\boldsymbol\eta(x,\gamma)=\int_{\mathbb R^d}\langle p(x),v_0(x)\rangle\,d\mu_0(x);\]
\item $\nu_t\rightharpoonup^* v_0\mu_0$ as $t\to 0^+$;
\item for all $t\in [0,T]$ we have
\[\int_{\mathbb R^d} \Psi\left(x,\dfrac{\nu_t}{\mu_t}(x)\right)\,d\mu_t(x)\le \int_{\mathbb R^d} \Psi\left(x,v_0(x)\right)\,d\mu_0(x);\]
\item the following bound holds
\[\int_0^T\int_{\mathbb R^d} \Psi\left(x,\dfrac{\nu_t}{\mu_t}(x)\right)\,d\mu_t(x)dt\le \beta.\]
\end{enumerate}
\end{lemma}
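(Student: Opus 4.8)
The plan is to build a trajectory that spends only a short initial time actually using control and then coasts on the pure drift. Set $M:=\int_{\mathbb R^d}\Psi(x,v_0(x))\,d\mu_0(x)$; since $v_0(x)\in F(x)$, Lemma~\ref{lemma:bprop}$(i)$ provides for $\mu_0$-a.e.\ $x$ a norm-minimal control $u_0(x)\in U$ with $f(x,u_0(x))=v_0(x)$ and $|u_0(x)|=\Psi(x,v_0(x))\le R_U$, whence $M\le R_U<+\infty$ and $x\mapsto u_0(x)$ is Borel (as in Lemma~\ref{lemma:MinContr}). Choosing $\tau:=\min\{T,\beta/(M+1)\}>0$, I would freeze each control and let each particle follow the Carath\'eodory ODE $\dot\gamma_x(t)=f_0(\gamma_x(t))+\chi_{[0,\tau]}(t)\,A(\gamma_x(t))\,u_0(x)$ with $\gamma_x(0)=x$; global solutions exist by the linear growth of $f_0,A$, and a Borel selection $x\mapsto\gamma_x\in\Gamma_{[0,T]}$ exists by measurable selection. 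Setting $\boldsymbol\eta^\flat:=\int_{\mathbb R^d}\delta_{(x,\gamma_x)}\,d\mu_0(x)$, which is concentrated on curves with $\dot\gamma(t)\in F(\gamma(t))$ (recall $0\in U$), Lemma~\ref{lemma:equiv-def}(\ref{item2:equiv-def}) shows $\boldsymbol\eta^\flat$ represents an admissible pair $(\boldsymbol\mu,\boldsymbol\nu)\in\mathcal A_{[0,T]}(\mu_0)$ with mean velocity $v_t=\nu_t/\mu_t$.

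Conditions (3) and (4) are properties of the pair and follow from convexity. Disintegrating $\boldsymbol\eta^\flat$ along $e_t$ and using Jensen's inequality with the convexity of $\Psi(y,\cdot)$ (Lemma~\ref{lemma:bprop}$(iii)$) together with $v_t(y)=\int\dot\gamma(t)\,d\eta_{t,y}$, I get
\[\int_{\mathbb R^d}\Psi(y,v_t(y))\,d\mu_t\le\int_{\mathbb R^d}\Psi(\gamma_x(t),\dot\gamma_x(t))\,d\mu_0(x)\le\int_{\mathbb R^d}\chi_{[0,\tau]}(t)\,|u_0(x)|\,d\mu_0(x),\]
since for $t\le\tau$ the control $u_0(x)\in U$ generates $\dot\gamma_x(t)$ at $\gamma_x(t)$ (so $\Psi(\gamma_x(t),\dot\gamma_x(t))\le|u_0(x)|$) and for $t>\tau$ the velocity is $f_0$, giving $\Psi=0$. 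This yields (3) for every $t$ (the bound is $\le M$), and integrating in time gives (4) with total cost $\le\tau M<\beta$. For (2), the flux is representation-independent, $\int\langle\phi,d\nu_t\rangle=\int_{\mathbb R^d}\langle\phi(\gamma_x(t)),\dot\gamma_x(t)\rangle\,d\mu_0(x)$; as $t\to0^+$ one has $\gamma_x(t)\to x$ and $\dot\gamma_x(t)\to v_0(x)$, and dominated convergence (linear growth of $F$, $\mu_0\in\mathscr P_2$) gives $\nu_t\rightharpoonup^* v_0\mu_0$.

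For (1) I would take $\boldsymbol\eta$ to be a representation of the pair $(\boldsymbol\mu,\boldsymbol\nu)$ (Lemma~\ref{lemma:equiv-def}(\ref{item1:equiv-def})) and show the difference quotients $\tfrac{e_t-e_0}{t}$ converge to $v_0\circ e_0$ weakly in $L^2_{\mu_0}$. Writing $\tfrac{\gamma(t)-\gamma(0)}{t}=\tfrac1t\int_0^t\dot\gamma(s)\,ds$ and using Fubini reduces the claim to $g(s):=\int\langle p(\gamma(0)),\dot\gamma(s)\rangle\,d\boldsymbol\eta\to\int_{\mathbb R^d}\langle p,v_0\rangle\,d\mu_0$ for $p\in C^0_b$. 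The key manipulation is to replace the initial point by the current one:
\[g(s)=\int\langle p(\gamma(s)),\dot\gamma(s)\rangle\,d\boldsymbol\eta+\int\langle p(\gamma(0))-p(\gamma(s)),\dot\gamma(s)\rangle\,d\boldsymbol\eta.\]
By Lemma~\ref{lemma:equiv-def}(\ref{item2:equiv-def}) the first term equals $\int\langle p,d\nu_s\rangle$, which converges to $\int_{\mathbb R^d}\langle p,v_0\rangle\,d\mu_0$ by (2) (upgraded to bounded test functions via the uniform moment bounds of Proposition~\ref{prop:moments}); the second is controlled by Cauchy--Schwarz and vanishes as $s\to0^+$, since $\gamma(s)\to\gamma(0)$ forces $\int|p(\gamma(0))-p(\gamma(s))|^2\,d\boldsymbol\eta\to0$ by dominated convergence while $\int|\dot\gamma(s)|^2\,d\boldsymbol\eta=\int_{\mathbb R^d}|v_s|^2\,d\mu_s$ stays bounded. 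The bound $\|\tfrac{e_t-e_0}{t}\|_{L^2(\boldsymbol\eta)}^2\le\tfrac1t\int_0^t\int_{\mathbb R^d}|v_s|^2\,d\mu_s\,ds$, whose right-hand side is uniform in $t$ by the moment estimates, together with density of $C^0_b$ in $L^2_{\mu_0}$, then extends the convergence to arbitrary $p\in L^2_{\mu_0}$, proving (1).

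The main obstacle is (1): the pairing of the velocity at time $s$ with the test function evaluated at the \emph{initial} configuration depends, a priori, on the joint law of initial and current positions under the chosen representation (as the example after Definition~\ref{def:reprSetA} shows, this joint law is genuinely representation-dependent). The device above circumvents this by trading the initial evaluation for the current one, at the cost of an error that vanishes because, on a short time scale, positions barely move; this makes the argument robust to the choice of representation and reduces (1) to the Eulerian statement (2) together with the moment estimates. The only other delicate point is the Borel measurability of $x\mapsto\gamma_x$, since $f_0$ and $A$ are merely continuous and the frozen ODE need not have unique solutions, which is a standard measurable-selection issue.
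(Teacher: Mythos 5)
Your proof is correct, and it departs from the paper's in two substantive ways. First, where the paper uses the exponentially decaying control $u_0(x)e^{-s}$ and then obtains item (4) by truncating at a small time $\tau$ and concatenating with the drift-only flow $f(\cdot,0)$ (invoking Lemma 4.4 in \cite{DNS} for the juxtaposition), you bake the truncation into the control itself via $\chi_{[0,\tau]}(t)\,u_0(x)$ with $\tau$ chosen from $\beta$ at the outset; this yields (3) and (4) in one stroke and dispenses with the concatenation step entirely. Second, and more interestingly, for item (1) the paper works directly with the specific measure $\boldsymbol\eta=\mu_0\otimes\delta_{\gamma_x}$, for which the pairing $\langle p\circ e_0,\tfrac{e_t-e_0}{t}\rangle$ collapses to $\int\langle p(x),\tfrac{\gamma_x(t)-x}{t}\rangle\,d\mu_0$ and dominated convergence finishes; you instead prove (1) for a genuine representation of the pair obtained from Lemma \ref{lemma:equiv-def}(\ref{item1:equiv-def}), reducing it to the Eulerian statement (2) by swapping $p(\gamma(0))$ for $p(\gamma(s))$ and controlling the error via Cauchy--Schwarz and the moment bounds, then extending from $C^0_b$ to $L^2_{\mu_0}$ by density and the uniform bound on the difference quotients. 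Your route is longer but representation-independent, and it quietly handles a point the paper glosses over: $\mu_0\otimes\delta_{\gamma_x}$ represents $\boldsymbol\mu$, but need not represent the \emph{pair} $(\boldsymbol\mu,\boldsymbol\nu)$ in the sense of Definition \ref{def:representation}(\ref{item1:repres}) when distinct characteristics cross, since the individual curve velocities may then differ from the averaged field $\nu_t/\mu_t$; the statement of the lemma asks precisely for a representation of the pair, so your extra care is not wasted. The remaining ingredients (Jensen with the convexity of $\Psi(y,\cdot)$ for (3), the Borel selection of the frozen-control flow, the Cesàro passage from $g(s)$ to the difference quotient) are all sound.
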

\begin{proof}
Let $u_0:\mathbb R^d\to U$ be a Borel map such that 
$v_0(x)=f(x,u_0(x))$ for all $x\in\mathbb R^d$ and $\Psi(x,v_0(x))=|u_0(x)|$. Notice that such a map $u_0$ exists by the same argument used in the proof of Lemma \ref{lemma:MinContr}.
Define the map $G:\mathbb R^d\rightrightarrows\Gamma_T$
\[G(\bar x)=\left\{\gamma\in AC([0,T]):\,\gamma(t)=\bar x+\int_0^t f\left(\gamma(s),u_0(\bar x)e^{-s}\right)\,ds, \,\,\forall t\in[0,T]\right\}.\]
According to Theorem 8.2.9 p.315 in \cite{AuF}, to prove the measurability of this map it is sufficient to notice that
the map $g:\mathbb R^d\times \Gamma_T\to \Gamma_T$ defined by 
\[g(\bar x,\gamma)(t):=\gamma(t)-\bar x+\int_0^t f\left(\gamma(s),u_0(\bar x)e^{-s}\right)\,ds,\]
is a Carath\'eodory map, i.e., $x\mapsto g(x,\gamma)$ is Borel for every $\gamma\in\Gamma_T$ and $\gamma\mapsto g(x,\gamma)$ is continuous for every $x\in\mathbb R^d$.\par\medskip\par
By Theorem 8.1.3 p. 308 in \cite{AuF}, since $G(\cdot)$ is Borel, it admits a Borel selection $x\mapsto \gamma_x\in G(x)$.
Define $\boldsymbol\mu=\{\mu_t\}_{t\in [0,T]}$ by setting $\mu_t=e_t\sharp\boldsymbol\eta$ where
\[\boldsymbol\eta=\mu_0\otimes\delta_{\gamma_x}\in\mathscr P(\mathbb R^d\times \Gamma_T).\]
According to Theorem 1 in \cite{CMPbulgaro}, we have that $(\boldsymbol\mu,\boldsymbol\nu=\{\nu_t\}_{t\in [0,T]})\in\mathcal A_{[0,T]}$,
where $\nu_t$ is defined by
\[\dfrac{\nu_t}{\mu_t}(y)=:v_t(y)=\int_{e_t^{-1}(y)}\dot \gamma(t) \,d\eta_{t,y}(x,\gamma),\]
for a.e. $t\in [0,T]$ and $\mu_t$-a.e. $y\in\mathbb R^d$, where we used the disintegration $\boldsymbol\eta=\mu_t\otimes\eta_{t,y}$.\par\medskip\par

\medskip

We prove (1) following a similar procedure as for the proof of Proposition 2.5 in \cite{MQ}. By Proposition \ref{prop:moments}, we have that $e_0,\frac{e_t-e_0}{t}\in L^2_{\boldsymbol\eta}$ for all $t\in[0,T]$. Thus, for all $p\in L^2_{\mu_0}(\mathbb R^d)$, by the definition of $\boldsymbol\eta$, we have
\begin{equation*}
\lim_{t\to0^+}\int_{\mathbb R^d\times\Gamma_{[0,T]}}\langle p\circ e_0(x,\gamma),\frac{e_t(x,\gamma)-e_0(x,\gamma)}{t}\rangle\,d\boldsymbol\eta(x,\gamma)=\lim_{t\to0^+}\int_{\mathbb R^d}\langle p(x),\frac{\gamma_x(t)-\gamma_x(0)}{t}\rangle\,d\mu_0(x).
\end{equation*}
By Dominated Convergence Theorem, we obtain
\begin{align*}
\lim_{t\to0^+}\int_{\mathbb R^d}\langle p(x),\frac{\gamma_x(t)-\gamma_x(0)}{t}\rangle\,d\mu(x)&=\int_{\mathbb R^d}\langle p(x),\lim_{t\to0^+}\frac{\gamma_x(t)-\gamma_x(0)}{t}\rangle\,d\mu_0(x)\\
&=\int_{\mathbb R^d}\langle p(x),v_0(x)\rangle\,d\mu_0(x),
\end{align*}
thanks to the uniform bound on $\displaystyle\frac{e_t-e_0}{t}$ in terms of the $2$-moment of $\mu_0$ (see Proposition \ref{prop:moments}).

\medskip

We prove (2). By the definition of $\boldsymbol\eta$, we have
\[v_t(y)=\int_{e_t^{-1}(y)}f(y,u_0(\gamma(0))e^{-t})\,d\eta_{t,y}(x,\gamma).\]
For any $\varphi\in C^0_b(\mathbb R^d;\mathbb R^d)$, we then have
\begin{equation}\label{eq:InitVel0}
\int_{\mathbb R^d}\langle \varphi(y),v_t(y)\rangle\,d\mu_t(y)=\int_{\mathbb R^d\times\Gamma_T}\varphi(\gamma(t))\cdot f(\gamma(t),u_0(\gamma(0))e^{-t})\,d\boldsymbol\eta(x,\gamma).
\end{equation}
We observe that we can use the Dominated Convergence Theorem, indeed
\begin{align*}
f(\gamma(t),u_0(\gamma(0))e^{-t})&= (1+|\gamma(t)|)\dfrac{f(\gamma(t),u_0(\gamma(0))e^{-t})}{1+|\gamma(t)|}\\
&\le C (1+|\gamma(t)|)\cdot \max\{1,\mathrm{diam}\, U\},
\end{align*}
which is $\boldsymbol\eta$-integrable since we can estimate the $2$-moment of $\mu_t$ in terms of the $2$-moment of $\mu_0$ by Proposition \ref{prop:moments}.
Thus, by passing to the limit under the integral sign in \eqref{eq:InitVel0}, we obtain
\[\lim_{t\to 0^+}\int_{\mathbb R^d}\langle \varphi(y),v_t(y)\rangle\,d\mu_t(y)=\int_{\mathbb R^d}\langle\varphi(x),f(x,u_0(x))\rangle\,d\mu_0=\int_{\mathbb R^d}\langle\varphi(x),v_0(x)\rangle\,d\mu_0(x).\]

\medskip

We prove (3). Recalling the affine structure of $f$, we have
\[v_t(y)=f\left(y,\int_{e_t^{-1}(y)}u_0(\gamma(0))e^{-t}\,d\eta_{t,y}(x,\gamma)\right),\]
thus
\begin{align*}
\int_{\mathbb R^d}\Psi(y,v_t(y))\,d\mu_t(y)\le&\int_{\mathbb R^d}\int_{e_t^{-1}(y)}|u_0(\gamma(0))|e^{-t}\,d\eta_{t,y}(x,\gamma)\,d\mu_t(x)\\
\le&\int_{\mathbb R^d\times\Gamma_T}|u_0(\gamma(0))|\,d\boldsymbol\eta(x,\gamma)=\int_{\mathbb R^d}|u_0(x)|\,d\mu_0(x)\\
=&\int_{\mathbb R^d}\Psi(x,v_0(x))\,d\mu_0(x).
\end{align*}
The last formula shows that the map
\[t\mapsto \int_{\mathbb R^d}\Psi(y,v_t(y))\,d\mu_t(y)\]
belongs to $L^1([0,T])$. In particular, for every $\beta>0$ there exists $\tau>0$ such that 
\[\int_{0}^{\tau}\int_{\mathbb R^d}\Psi(y,v_t(y))\,d\mu_t(y)\,dt\le \beta.\]
We then consider any solution $\boldsymbol{\tilde\mu}=\{\tilde \mu_t\}_{t\in [\tau,T]}$ of the equation
\[\begin{cases}
\partial_t\tilde\mu_t+\mathrm{div}(f(x,0)\tilde\mu_t)=0,\\
\tilde\mu_\tau=\mu_{\tau}.
\end{cases}\]
By Lemma 4.4 in \cite{DNS}, the juxtaposition of $\boldsymbol\mu$ restricted to $[0,\tau]$ with $\boldsymbol{\tilde\mu}$, and the juxtaposition of the corresponding families of Borel vector-valued measures $\boldsymbol\nu$ restricted to $[0,\tau]$ with $\boldsymbol{\tilde\nu}=\{f(\cdot,0)\tilde\mu_t\}_{t\in[\tau,T]}$, yields an admissible trajectory satisfying (4).
\end{proof}

\section{General dynamic programming principle}\label{sec:GF}

In this section we present an abstract Dynamic Programming Principle which holds in quite general frameworks: this will allow us to treat the optimal control problems proposed in Section \ref{sec:sparse} 
and in Appendix \ref{sec:appendix} in a unified way.
The proposed structure establish a common framework to check the validity of a Dynamic Programming Principle for problems of different nature.  

\begin{definition}\label{def:GFobjects}
A \emph{generalized control system} is a quadruplet $(X,\Sigma,c,c_f)$ where $X$, $\Sigma$ are nonempty sets, 
$c_f:X\to [0,+\infty]$, and $c:X\times X\times \Sigma\to [0,+\infty]$, is a map satisfying the following properties
\begin{enumerate}
\item[$\boldsymbol{(C_1)}$] for every $x,y,z\in X$, $\sigma_1,\sigma_2\in \Sigma$, there exists $\sigma'\in\Sigma$ such that
\[c(x,z,\sigma')\le c(x,y,\sigma_1)+c(y,z,\sigma_2).\]
\item[$\boldsymbol{(C_2)}$] for every $x,z\in X$, $\sigma\in \Sigma$, there exist $y'\in X$, $\sigma'_1,\sigma'_2\in\Sigma$ such that
\[c(x,z,\sigma)\ge c(x,y',\sigma'_1)+c(y',z,\sigma'_2).\]
\end{enumerate}
Given $x\in X$ we define the \emph{reachable set from} $x$ by
\[\mathscr R_x:=\left\{y\in X:\,\inf_{\sigma\in \Sigma}c(x,y,\sigma)<+\infty\right\},\]
and if $y\in\mathscr R_x$ we say that $y$ can be reached from $x$.
Notice that if $y\in\mathscr R_x$ and $z\in\mathscr R_y$, property $\boldsymbol{(C_1)}$ implies that $z\in \mathscr R_x$, hence the position
\[R_{\Sigma}:=\left\{(x,y)\in X\times X:\,y\in\mathscr R_x\right\},\]
defines a transitive relation $R_{\Sigma}$ on $X$.\par\medskip\par
If we define
\[X_\Sigma:=\{x\in X:\,(x,x)\in R_{\Sigma}\},\]
we have that the restriction of $R_{\Sigma}$ on $X_{\Sigma}$ is a partial order on $\Sigma$.
Equivalently, we have that $x\in X_{\Sigma}$ if and only if there exists $\sigma\in \Sigma$ such that $c(x,x,\sigma)<+\infty$, i.e., $x\in\mathscr R_x$.
\par\medskip\par
Given $x\in X$ and $y\in\mathscr R_x$, we define the set of \emph{admissible transitions} from $x$ to $y$ by
\[\mathscr A(x,y):=\left\{\sigma\in \Sigma:\, c(x,y,\sigma)<+\infty\right\}.\]
and if $\sigma\in \mathscr A(x,y)$, we call $c(x,y,\sigma)$ the \emph{cost of the admissible transition} $\sigma$.
We call $c_f(y)$ the \emph{exit cost} at the state $y$.\par\medskip\par
We define the \emph{value function} $V:X\to[0,+\infty]$ by setting
\[V(x)=\inf_{\substack{y\in X\\ \sigma\in\Sigma}}\left\{c(x,y,\sigma)+c_f(y)\right\},\]
and if $V(x)<+\infty$, we have
$V(x)=\displaystyle\inf_{y\in \mathscr R_x}\inf_{\sigma\in\mathscr A(x,y)}\left\{c(x,y,\sigma)+c_f(y)\right\}$.\par
\end{definition}

We prove a Dynamic Programming Principle for this general framework.

\begin{theorem}[Dynamic Programming Principle]\label{thm:DPP}
For every $x\in X$ we have
\[V(x)=\inf_{\substack{y\in X\\ \sigma\in\Sigma}}\left\{c(x,y,\sigma)+V(y)\right\}.\]
\end{theorem}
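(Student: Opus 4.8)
The plan is to establish the two inequalities separately, with axiom $\boldsymbol{(C_1)}$ yielding one direction and axiom $\boldsymbol{(C_2)}$ the other. Throughout I would write $W(x):=\inf_{y\in X,\,\sigma\in\Sigma}\{c(x,y,\sigma)+V(y)\}$ for the right-hand side and work entirely in $[0,+\infty]$, never subtracting, so that every manipulation is a legitimate extended-real inequality and the degenerate cases $V(x)=+\infty$ or $W(x)=+\infty$ are absorbed automatically.

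For $V(x)\le W(x)$ I would fix an arbitrary pair $(y,\sigma_1)$ and expand the inner value function $V(y)=\inf_{z,\sigma_2}\{c(y,z,\sigma_2)+c_f(z)\}$. For any competitor $(z,\sigma_2)$ in this infimum, axiom $\boldsymbol{(C_1)}$ produces a single $\sigma'$ with $c(x,z,\sigma')\le c(x,y,\sigma_1)+c(y,z,\sigma_2)$; since $(z,\sigma')$ is then an admissible choice in the definition of $V(x)$, one obtains
\[V(x)\le c(x,z,\sigma')+c_f(z)\le c(x,y,\sigma_1)+c(y,z,\sigma_2)+c_f(z).\]
Taking the infimum over $(z,\sigma_2)$ collapses the last two summands into $V(y)$ (the constant $c(x,y,\sigma_1)$ factors out of the infimum), giving $V(x)\le c(x,y,\sigma_1)+V(y)$; a further infimum over $(y,\sigma_1)$ yields $V(x)\le W(x)$.

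For the reverse inequality $V(x)\ge W(x)$ I would instead start from the definition of $V(x)$ and split each competitor via axiom $\boldsymbol{(C_2)}$. Given any $(z,\sigma)$, that axiom furnishes an intermediate state $y'$ and transitions $\sigma_1',\sigma_2'$ with $c(x,z,\sigma)\ge c(x,y',\sigma_1')+c(y',z,\sigma_2')$. Since $c(y',z,\sigma_2')+c_f(z)\ge V(y')$ by definition of the value function, it follows that
\[c(x,z,\sigma)+c_f(z)\ge c(x,y',\sigma_1')+V(y')\ge W(x).\]
As $(z,\sigma)$ is arbitrary, taking the infimum on the left gives $V(x)\ge W(x)$, and combining the two inequalities proves the claim.

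I do not anticipate a genuine obstacle: the statement is the abstract skeleton of a dynamic programming argument, and its entire content lies in recognising that $\boldsymbol{(C_1)}$ encodes a \emph{sub-additivity} property (a direct transition costs no more than a concatenation through an intermediate state) which bounds $V$ from above, whereas $\boldsymbol{(C_2)}$ encodes a \emph{splitting} property (every transition decomposes through some intermediate state at no greater cost) which bounds $V$ from below. The only point requiring care is the bookkeeping with the nested infima and with the value $+\infty$, and this is precisely what the convention of staying in $[0,+\infty]$ and never subtracting handles, eliminating the need for case distinctions.
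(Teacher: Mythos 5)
Your proof is correct and follows essentially the same route as the paper: $\boldsymbol{(C_1)}$ gives $V(x)\le W(x)$ by concatenating a competitor for $V(x\to y)$ with one for $V(y)$, and $\boldsymbol{(C_2)}$ gives $V(x)\ge W(x)$ by splitting a competitor for $V(x)$ through an intermediate state. The only (cosmetic) difference is that the paper extracts $\varepsilon$-approximate minimizers and lets $\varepsilon\to0^+$, whereas you work with arbitrary competitors and take the nested infima at the end, which avoids the subtraction of $\varepsilon$ altogether.
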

\begin{proof}
Set $W(x)=\displaystyle\inf_{\substack{y\in X\\ \sigma\in\Sigma}}\left\{c(x,y,\sigma)+V(y)\right\}$.
\begin{enumerate}
\item We prove that $V(x)\ge W(x)$. If $V(x)=+\infty$ there is nothing to prove. So assume $V(x)<+\infty$.
For all $\varepsilon>0$ there exist $y_\varepsilon\in X$, $\sigma_\varepsilon\in \Sigma$ such that
\[V(x)+\varepsilon\ge c(x,y_\varepsilon,\sigma_\varepsilon)+c_f(y_\varepsilon).\]
According to $\boldsymbol{(C_2)}$, there are $y'_\varepsilon\in X$, $\sigma'_{\varepsilon,1},\sigma'_{\varepsilon,2}\in\Sigma$
such that
\begin{align*}
V(x)+\varepsilon\ge&c(x,y_\varepsilon,\sigma_\varepsilon)+c_f(y_\varepsilon)\ge c(x,y'_\varepsilon,\sigma'_{\varepsilon,1})+c(y'_\varepsilon,y_\varepsilon,\sigma'_{\varepsilon,2})+c_f(y_\varepsilon)\\
\ge&c(x,y'_\varepsilon,\sigma'_{\varepsilon,1})+V(y'_\varepsilon)\ge \inf_{\substack{y\in X\\ \sigma\in\Sigma}}\left\{c(x,y,\sigma)+V(y)\right\}=W(x),
\end{align*}
and we conclude by letting $\varepsilon\to 0^+$.
\item We prove that $V(x)\le W(x)$. If $W(x)=+\infty$ there is nothing to prove. So assume $W(x)<+\infty$. For all $\varepsilon>0$
there exist $y_\varepsilon,y'_\varepsilon\in X$, $\sigma_\varepsilon,\sigma'_\varepsilon\in \Sigma$ such that
\[W(x)+\varepsilon\ge c(x,y_\varepsilon,\sigma_\varepsilon)+V(y_\varepsilon)\ge c(x,y_\varepsilon,\sigma_\varepsilon)+c(y_\varepsilon,y'_\varepsilon,\sigma'_\varepsilon)+c_f(y'_\varepsilon)-\varepsilon.\]
According to $\boldsymbol{(C_1)}$, there exists $\sigma_{\varepsilon}''\in\Sigma$ such that
\begin{align*}
W(x)+\varepsilon\ge&c(x,y'_\varepsilon,\sigma''_\varepsilon)+c_f(y'_\varepsilon)-\varepsilon\\ \ge&\inf_{\substack{y\in X\\ \sigma\in\Sigma}}\left\{c(x,y,\sigma)+c_f(y)\right\}-\varepsilon=V(x)-\varepsilon,
\end{align*}
and we conclude by letting $\varepsilon\to 0^+$.
\end{enumerate}
\end{proof}

\begin{definition}[Generalized admissible trajectory]\label{def:adm}
Let $(I,\le_I)$ be a totally orderered set admitting a maximal element $b\in I$ and a minimal element $a\in I$ w.r.t. the order $\le_I$.
We endow $I$ with the order topology, and use the notation $I=[a,b]$.
Given $x\in X$, a \emph{generalized admissible trajectory} starting from $x$ defined on $I$ is a pair $(\gamma,\sigma)$ of maps $\gamma:I\to X$, $\sigma:I\to \Sigma$, 
satisfying 
\begin{enumerate}
\item $\gamma(a)=x$;
\item $c(x,\gamma(t),\sigma(t))<+\infty$ for all $t\in I$; 
\item\label{item3:admGF} for all $t_1,t_2\in I$ with $t_1\le_I t_2$ there exists $\sigma_{t_1\to t_2}\in \Sigma$
such that 
\[c(x,\gamma(t_2),\sigma(t_2))\ge c(x,\gamma(t_1),\sigma(t_1))+c(\gamma(t_1),\gamma(t_2),\sigma_{t_1\to t_2}) \]
\end{enumerate}
In particular, by taking $t=b$ in $(2)$, we must have $\gamma(b)\in\mathscr R_x$. Moreover, from $(3)$ we deduce that $c(\gamma(t_1),\gamma(t_2),\sigma_{t_1\to t_2})<+\infty$, so 
$\gamma(t_2)\in\mathscr R_{\gamma(t_1)}$ for all $t_1,t_2\in I$ with $a\le_I t_1\le_I t_2\le_Ib$.\par
\end{definition}
\begin{remark}\label{rem:zerostand}
We notice that if $(\gamma,\sigma)$ is a generalized admissible trajectory defined on $I$, and $\sigma_{a\to a}\in\Sigma$ satisfies Definition \ref{def:adm} (3) with $t_1=t_2=a$,
we can define $\sigma'(t)=\sigma(t)$ for $t\ne a$ and $\sigma'(a)=\sigma_{a\to a}$.
In this case, we have that $(\gamma,\sigma')$ is still a generalized admissible trajectory, and, from Definition \ref{def:adm} (3), recalling that $\sigma'(a)=\sigma_{a\to a}$, 
we have $c(x,x,\sigma'(a))=0$. Thus, without loss of generality, given a generalized admissible trajectory $(\gamma,\sigma)$ defined on $I=[a,b]$ we always assume that
$c(x,x,\sigma(a))=0$.
\end{remark}

\begin{definition}[Optimal transitions and optimal trajectories]\label{def:GFopt}
Given $x,y\in X$, $\sigma\in\Sigma$, we say that $\sigma$ is an \emph{optimal transition} from $x$ to $y$
if \[V(x)=c(x,y,\sigma)+V(y).\]
A generalized admissible trajectory $(\gamma,\sigma)$ defined on $I=[a,b]$ is called an \emph{optimal trajectory} if for all $t\in I$
we have that $\sigma(t)$ is an optimal transition from $\gamma(a)$ to $\gamma(t)$, i.e.,
\[V(\gamma(a))=c(\gamma(a),\gamma(t),\sigma(t))+V(\gamma(t)), \textrm{ for all }t\in I.\]
\end{definition}

\begin{corollary}[DPP for generalized admissible trajectories]\label{cor:DPPadm}
Let $x\in X$ and $(\gamma,\sigma)$ be a generalized admissible trajectory starting from $x$ defined on the totally ordered set $I$.
Then the map $h:I\to [0,+\infty]$ defined as 
\[h(t):=c(x,\gamma(t),\sigma(t))+V(\gamma(t)),\]
is monotone increasing, and it is constant if and only if the trajectory is optimal.
Moreover, if the trajectory is optimal, for all $t,s\in I$ with $t\le s$ we have that
any $\sigma_{t\to s}\in \Sigma$ satisfying $(3)$ in Definition \ref{def:adm} is an optimal transition from $\gamma(t)$ to $\gamma(s)$. 
\end{corollary}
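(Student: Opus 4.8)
The plan is to establish the three assertions in order, relying only on the Dynamic Programming Principle (Theorem \ref{thm:DPP}) and the superadditivity relation of Definition \ref{def:adm}(3); notably, the order topology on $I$ plays no role in the argument. To prove that $h$ is monotone increasing, I would fix $t_1\le_I t_2$ and invoke Definition \ref{def:adm}(3) to produce $\sigma_{t_1\to t_2}\in\Sigma$ with
\[c(x,\gamma(t_2),\sigma(t_2))\ge c(x,\gamma(t_1),\sigma(t_1))+c(\gamma(t_1),\gamma(t_2),\sigma_{t_1\to t_2}).\]
Adding $V(\gamma(t_2))$ to both sides and then applying Theorem \ref{thm:DPP} at $\gamma(t_1)$ in the form $V(\gamma(t_1))\le c(\gamma(t_1),\gamma(t_2),\sigma_{t_1\to t_2})+V(\gamma(t_2))$ yields $h(t_2)\ge c(x,\gamma(t_1),\sigma(t_1))+V(\gamma(t_1))=h(t_1)$. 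Every manipulation here is the addition of nonnegative quantities, so it is valid in $[0,+\infty]$.

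For the equivalence in the second assertion, the first step is the normalization $h(a)=V(x)$: by Remark \ref{rem:zerostand} we may assume $c(x,x,\sigma(a))=0$, so that $h(a)=c(x,x,\sigma(a))+V(x)=V(x)$. By Definition \ref{def:GFopt}, the trajectory is optimal exactly when $V(x)=c(x,\gamma(t),\sigma(t))+V(\gamma(t))=h(t)$ for every $t\in I$, that is, when $h\equiv V(x)$. Combining this with monotonicity and the identity $h(a)=V(x)$ gives the chain of equivalences: $h$ is constant if and only if $h(t)=h(a)=V(x)$ for all $t$, which holds if and only if the trajectory is optimal.

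For the last assertion, assume the trajectory is optimal and fix $t\le_I s$. Writing the optimality identities at $t$ and at $s$ and inserting the relation from Definition \ref{def:adm}(3) with $t_1=t$, $t_2=s$ gives
\[c(x,\gamma(t),\sigma(t))+V(\gamma(t))=V(x)\ge c(x,\gamma(t),\sigma(t))+c(\gamma(t),\gamma(s),\sigma_{t\to s})+V(\gamma(s)).\]
Since $c(x,\gamma(t),\sigma(t))<+\infty$ by admissibility (Definition \ref{def:adm}(2)), I may cancel this term from both sides to obtain $V(\gamma(t))\ge c(\gamma(t),\gamma(s),\sigma_{t\to s})+V(\gamma(s))$. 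The reverse inequality is precisely Theorem \ref{thm:DPP} applied at $\gamma(t)$, so equality holds and $\sigma_{t\to s}$ is an optimal transition from $\gamma(t)$ to $\gamma(s)$.

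The computations themselves are short. The only point I would treat with genuine care is the bookkeeping in $[0,+\infty]$: each invocation of the Dynamic Programming Principle must be placed on the correct side of the relevant inequality, and the single cancellation in the third part is legitimate only because admissibility guarantees $c(x,\gamma(t),\sigma(t))<+\infty$. This is the step I expect to be the main obstacle, since an unjustified cancellation — in effect subtracting $+\infty$ — would silently break the optimal-transition conclusion; everywhere else I would deliberately phrase the estimates as additions of nonnegative terms to avoid the issue entirely.
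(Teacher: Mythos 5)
Your proof is correct and follows essentially the same route as the paper: monotonicity from Theorem \ref{thm:DPP} combined with Definition \ref{def:adm}(3), the equivalence via the normalization $c(x,x,\sigma(a))=0$ from Remark \ref{rem:zerostand}, and the optimal-transition claim by forcing equality in the same chain of inequalities. Your only departures are cosmetic but welcome refinements — phrasing the estimates as additions in $[0,+\infty]$ and explicitly justifying the one cancellation by the finiteness of $c(x,\gamma(t),\sigma(t))$ — where the paper works with differences without comment.
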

\begin{proof}
Let $(\gamma,\sigma)$ be a generalized admissible trajectory, we prove that $h(\cdot)$ is monotone increasing: indeed,
recalling Theorem \ref{thm:DPP}, 
\[V(z_1)\le c(z_1,z_2,\sigma)+V(z_2),\textrm{ for all }\sigma\in\Sigma,\,z_1,z_2\in X,\] 
thus, choosing $z_1=\gamma(t)$, $z_2=\gamma(s)$, $\sigma=\sigma_{t\to s}$, from Definition \ref{def:adm} (3), we have
\begin{equation}\label{eq:dpp}
V(\gamma(t))-V(\gamma(s))\le c(\gamma(t),\gamma(s),\sigma_{t\to s})\le c(x,\gamma(s),\sigma(s))-c(x,\gamma(t),\sigma(t)),
\end{equation}
hence
\[h(t)=V(\gamma(t))+c(x,\gamma(t),\sigma(t))\le V(\gamma(s))+c(x,\gamma(s),\sigma(s))=h(s),\]
as desired.
\begin{enumerate}
\item We prove that, if $h(\cdot)$ is constant, then the trajectory is optimal.
If $h$ is constant, we have 
\[h(t)=V(\gamma(t))+c(x,\gamma(t),\sigma(t))=V(\gamma(s))+c(x,\gamma(s),\sigma(s))=h(s),\]
and so
\[V(\gamma(t))-V(\gamma(s))=c(x,\gamma(s),\sigma(s))-c(x,\gamma(t),\sigma(t)).\]
In particular, all the inequalities in \eqref{eq:dpp} are fulfilled as equality,
thus $\sigma_{t\to s}$ is an optimal transition between $\gamma(t)$ and $\gamma(s)$.
By taking $t=a$, and recalling that we can always assume that $c(x,x,\sigma(a))=0$ (see Remark \ref{rem:zerostand}),
we have that $(\gamma,\sigma)$ is optimal.
\item We prove that, if the trajectory is optimal, then $h(\cdot)$ is constant.
Since the trajectory is optimal, we have 
\[V(x)=c(x,\gamma(t),\sigma(t))+V(\gamma(t)),\hspace{1cm}V(x)=c(x,\gamma(s),\sigma(s))+V(\gamma(s)),\]
hence 
\[V(\gamma(t))-V(\gamma(s))=c(x,\gamma(s),\sigma(s))-c(x,\gamma(t),\sigma(t)),\]
thus $h(\cdot)$ is constant, and again all the inequalities in \eqref{eq:dpp} are fulfilled as equality,
so $\sigma_{t\to s}$ is an optimal transition between $\gamma(t)$ and $\gamma(s)$.
\end{enumerate}
\end{proof}

\begin{corollary}\label{cor:DPPadm2}
Let $(\gamma,\sigma)$ be a generalized admissible trajectory defined on the totally ordered set $I=[a,b]$.
\begin{enumerate}
\item\label{item1corGF} if $\displaystyle\inf_{\sigma\in\Sigma}c(\gamma(b),\gamma(b),\sigma)=0$, we have $V(\gamma(b))\le c_f(\gamma(b))$.
\item if $(\gamma,\sigma)$ is optimal and $V(\gamma(b))=c_f(\gamma(b))$ then 
$\gamma(b)\in X$ and $\sigma(b)\in \Sigma$ realize the infimum in the definition of $V(\gamma(a))$, i.e.,
\[V(\gamma(a))=c(\gamma(a),\gamma(b),\sigma(b))+c_f(\gamma(b)).\]
\item\label{item3:corDPP2} if $\displaystyle\inf_{\sigma\in\Sigma}c(\gamma(b),\gamma(b),\sigma)=0$ and $\gamma(b)\in X$ and $\sigma(b)\in \Sigma$ realize the infimum in the definition of $V(\gamma(a))$, i.e.,
\[V(\gamma(a))=c(\gamma(a),\gamma(b),\sigma(b))+c_f(\gamma(b)),\]
then $(\gamma,\sigma)$ is optimal.
\end{enumerate}
\end{corollary}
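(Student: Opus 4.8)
The plan is to obtain all three items directly from the definition of $V$, the Dynamic Programming Principle (Theorem \ref{thm:DPP}), and the monotonicity of the map $h$ established in Corollary \ref{cor:DPPadm}; only item (3) requires a short combination of these.

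For item (1), I would test the infimum defining $V(\gamma(b))$ against the admissible choice $y=\gamma(b)$. Since $c_f(\gamma(b))$ does not depend on $\sigma$, this gives
\[
V(\gamma(b))\le \inf_{\sigma\in\Sigma}\left\{c(\gamma(b),\gamma(b),\sigma)+c_f(\gamma(b))\right\}=c_f(\gamma(b))+\inf_{\sigma\in\Sigma}c(\gamma(b),\gamma(b),\sigma)=c_f(\gamma(b)),
\]
where the last equality uses the hypothesis $\inf_{\sigma\in\Sigma}c(\gamma(b),\gamma(b),\sigma)=0$. For item (2), I would simply evaluate the optimality condition of Definition \ref{def:GFopt} at $t=b$, which reads $V(\gamma(a))=c(\gamma(a),\gamma(b),\sigma(b))+V(\gamma(b))$, and then substitute the assumption $V(\gamma(b))=c_f(\gamma(b))$ to recover the claimed identity.

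For item (3), the strategy is to first promote $\sigma(b)$ to an optimal transition from $\gamma(a)$ to $\gamma(b)$, and then use the monotonicity of $h$ to conclude optimality at every time. Under the standing hypothesis, item (1) applies and yields $V(\gamma(b))\le c_f(\gamma(b))$. Combining this with the general inequality $V(\gamma(a))\le c(\gamma(a),\gamma(b),\sigma(b))+V(\gamma(b))$ supplied by Theorem \ref{thm:DPP} and with the assumed equality $V(\gamma(a))=c(\gamma(a),\gamma(b),\sigma(b))+c_f(\gamma(b))$, I would squeeze
\[
V(\gamma(a))\le c(\gamma(a),\gamma(b),\sigma(b))+V(\gamma(b))\le c(\gamma(a),\gamma(b),\sigma(b))+c_f(\gamma(b))=V(\gamma(a)),
\]
forcing $h(b)=c(\gamma(a),\gamma(b),\sigma(b))+V(\gamma(b))=V(\gamma(a))$. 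Recalling Remark \ref{rem:zerostand}, one has $h(a)=c(\gamma(a),\gamma(a),\sigma(a))+V(\gamma(a))=V(\gamma(a))$, so $h(a)=h(b)$. Since $h$ is monotone increasing on $I=[a,b]$ by Corollary \ref{cor:DPPadm}, it must be constant on $I$, and the same corollary then gives that $(\gamma,\sigma)$ is optimal.

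The main obstacle, and it is a mild one, is item (3): the key is to notice that the boundary inequality $V(\gamma(b))\le c_f(\gamma(b))$ from item (1), together with the endpoint optimality hypothesis, pins down $h(a)=h(b)$, after which the already-established monotonicity of $h$ upgrades a single endpoint equality into optimality at every $t\in I$. No estimate beyond the Dynamic Programming Principle and Corollary \ref{cor:DPPadm} is needed.
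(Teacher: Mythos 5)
Your proposal is correct and follows essentially the same route as the paper: item (1) by testing the infimum at $y=\gamma(b)$, item (2) by evaluating the optimality identity at $t=b$, and item (3) by squeezing $c(\gamma(a),\gamma(b),\sigma(b))+V(\gamma(b))$ between $V(\gamma(a))$ and itself using item (1) and Theorem \ref{thm:DPP}, then invoking the monotonicity of $h$ from Corollary \ref{cor:DPPadm} together with the normalization $c(\gamma(a),\gamma(a),\sigma(a))=0$ of Remark \ref{rem:zerostand}. No gaps.
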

\begin{proof}
\begin{enumerate}
\item[]
\item By assumption, for all $\varepsilon\ge 0$ there exists $\sigma_\varepsilon\in\Sigma$ with
\[V(\gamma(b))\le c(\gamma(b),\gamma(b),\sigma_\varepsilon)+c_f(\gamma(b))\le \varepsilon+c_f(\gamma(b)),\] 
and we conclude by letting $\varepsilon\to 0^+$ to obtain $V(\gamma(b))\le c_f(\gamma(b))$.
\item Recalling Theorem \ref{thm:DPP}, we have 
\[V(\gamma(a))=c(\gamma(a),\gamma(b),\sigma(b))+V(\gamma(b))=c(\gamma(a),\gamma(b),\sigma(b))+c_f(\gamma(b)).\]
\item Conversely, since $V(\gamma(b))\le c_f(\gamma(b))$ by item \ref{item1corGF}, we have
\[V(\gamma(a))=c(\gamma(a),\gamma(b),\sigma(b))+c_f(\gamma(b))\ge c(\gamma(a),\gamma(b),\sigma(b))+V(\gamma(b)).\]
but, according to the Theorem \ref{thm:DPP}, the opposite inequality holds, and so
\[V(\gamma(a))=c(\gamma(a),\gamma(b),\sigma(b))+V(\gamma(b)).\]
Recalling that $c(\gamma(a),\gamma(a),\sigma(a))=0$, by Corollary \ref{cor:DPPadm} we obtain for all $t\in [a,b]$
\begin{align*}c(\gamma(a),\gamma(a),\sigma(a))+V(\gamma(a))&\le c(\gamma(a),\gamma(t),\sigma(t))+V(\gamma(t))\\ &\le c(\gamma(a),\gamma(b),\sigma(b))+V(\gamma(b)),\end{align*}
and since the first and the last terms are equal, we conclude that for all $t\in [a,b]$
we have $V(\gamma(a))= c(\gamma(a),\gamma(t),\sigma(t))+V(\gamma(t))$,
and so the trajectory is optimal by Corollary \ref{cor:DPPadm}.
\end{enumerate}
\end{proof}

This completes the proof of the Dynamic Programming Principle.

\section{Control sparsity problems}\label{sec:sparse}
In this section we use the notation and the setting introduced in Section \ref{sec:rainbow} to formulate and analyze two problems involving a control-sparsity constraint. 
For both of them we will implement the following strategy:
\begin{itemize}
\item we describe the \emph{control sparsity} constraint that will be included in the cost functional to be minimized;
\item we prove a compactness property of the set of \emph{feasible} trajectories, i.e. admissible trajectories satisfying the control sparsity constraint;
\item we use the results of Section \ref{sec:GF} to prove a Dynamic Programming for the value function of the problem;
\item we prove the existence of an optimal trajectory;
\item we characterize the set of initial velocities for a feasible trajectory.
\end{itemize}
The last step is essential in order to provide necessary conditions in form of an Hamilton-Jacobi-Bellman equation satisfied by the value function (see Section \ref{sec:HJB}).

\subsection{The $L^\infty$-time averaged feasibility case}\label{sec:linftymu}\

Let $\alpha\ge 0$ be fixed, $p\ge 1$. Referring to the notation introduced in Section \ref{sec:GF}, we set
\[X=\mathscr P_p(\mathbb R^d),\quad \Sigma=\displaystyle\bigcup_{\substack{I\subseteq \mathbb R\\ I\textrm{ compact interval}}}\left[AC(I;\mathscr P_p(\mathbb R^d))\times\mathrm{Bor}(I;\mathscr M(\mathbb R^d;\mathbb R^d))\right].\] 
Observe that the set $\mathcal A_I$ of admissible trajectories starting by a measure in $\mathscr P_p(\mathbb R^d)$ (see Definition \ref{def:adm-traj} and Proposition \ref{prop:moments}) is a subset of $\Sigma$.\par
On the set $\Sigma_I:=AC(I;\mathscr P_p(\mathbb R^d))\times\mathrm{Bor}(I;\mathscr M(\mathbb R^d;\mathbb R^d))$ we will consider the topology of sequentially a.e. pointwise $w^*$-convergence, i.e.,
given $\{\rho^n\}_{n\in\mathbb N}:=\{(\boldsymbol\mu^n,\boldsymbol\nu^n)\}_{n}\subseteq\Sigma_I$, and $\rho:=(\boldsymbol\mu,\boldsymbol\nu)\in\Sigma_I$, we say that $\rho^n\rightharpoonup^*\rho$ 
if and only if $(\mu^n_t,\nu^n_t)\rightharpoonup^*(\mu_t,\nu_t)$ for a.e. $t\in I$.

\begin{definition}[$L^\infty$-time feasible trajectories]\label{def:Linftyfeas}
Given $\rho=(\boldsymbol\mu,\boldsymbol\nu)\in \mathcal A_I$, we define
the map $\theta_\rho:I\to [0,+\infty]$ by setting
\begin{equation*}
\theta_\rho(s):=\int_{\mathbb R^d}\Psi\left(x,\dfrac{\nu_s}{\mu_s}(x)\right)\,d\mu_s(x),
\end{equation*}
where $\Psi$ is the \emph{control magnitude density}.
Given $\bar\mu\in\mathscr P_p(\mathbb R^d)$, we set 
\begin{align*}
\mathcal F^\infty_I(\bar\mu):=&\{\rho\in \mathcal A_I(\bar\mu):\,\theta_\rho(s)\le \alpha \textrm{ for a.e. }s\in I\}=\{\rho\in \mathcal A_I(\bar\mu):\,\|\theta_\rho\|_{L^{\infty}(I)}\le \alpha\}\\
\mathcal R_I^\infty(\bar\mu):=&\left\{\boldsymbol\eta\in\mathscr P(\mathbb R^d\times\Gamma_I)\,:\,\exists (\boldsymbol\mu,\boldsymbol\nu)\in\mathcal F^\infty_I(\bar\mu) \textrm{ s.t. } \boldsymbol\eta \textrm{ represents }(\boldsymbol\mu,\boldsymbol\nu)\right\},
\end{align*}
and we define the set of \emph{$\alpha$-feasible trajectories} defined on $I$ by
\[\mathcal F^\infty_I:=\bigcup_{\bar\mu\in\mathscr P_p(\mathbb R^d)}\mathcal F^\infty_I(\bar\mu)\subseteq \Sigma_I.\]
Finally, notice that 
\[\|\theta_\rho\|_{L^{\infty}(I)}\le \alpha\textrm{ if and only if }\int_I \mathcal E(\mu_t,\nu_t)\,dt<+\infty,\]
where  $\mathcal E:\mathscr P(\mathbb R^d)\times\mathscr M(\mathbb R^d;\mathbb R^d)\to [0,+\infty]$ is defined by
\begin{align*}
\mathcal E(\mu,E)&:=\begin{cases}
\displaystyle I_{[0,\alpha]}\left(\int_{\mathbb R^d}\Psi\left(x,\frac{E}{\mu}(x)\right)\,d\mu(x)\right),
&\textrm{if }|E|\ll \mu,\\ \\
+\infty,&\textrm{otherwise}.\end{cases}
\end{align*}
\end{definition}

\begin{remark}
The quantity $\theta_\rho(s)$ represents the total magnitude of control acting on the mass at time $s$. 
Thus, the feasibility constraint imposes a restriction on the amount of control to be used w.r.t. the portion of controlled mass: in particular, at every instant of time the controller
must decide if it is more convenient to control all the mass with a reduced amount of control, or to act on few individuals with a greater amount of control (\emph{control sparsity}). 
In some cases (depending on the cost functional) the two strategies are actually equivalent. We refer to the surveys \cites{CFPT,FPR} for some applications of a sparse control strategy in the framework of multi-agent systems.
\end{remark}

The main topological properties of the set of feasible trajectories are summarized as follows, and are natural extensions of the same properties proved for the admissibility set $\mathcal A_I$, 
respectively in Proposition 3 and Theorem 3 in \cite{CMNP} and in Proposition 1 in \cite{CMPbulgaro}.

\begin{proposition}\label{prop:topprop}
Let $I\subseteq \mathbb R$ be a compact nonempty interval, $p\ge 1$, $\mu_0\in\mathscr P_p(\mathbb R^d)$, $C_1\ge 0$. Then 
\begin{enumerate}
\item $\mathcal F^\infty_I$ is closed w.r.t. the topology of $\Sigma_I$;
\item for any $\mathscr B\subseteq \mathcal F^\infty_I$, $C_1>0$ such that for all $(\boldsymbol\mu,\boldsymbol\nu)\in\mathscr B$ with $\boldsymbol\mu=\{\mu_t\}_{t\in I}$ it holds
$\mathrm m_p(\mu_0)\le C_1$, we have that the closure of $\mathscr B$ in $\Sigma_I$ is contained in $\mathcal F^\infty_I$;
\item $\mathcal F^\infty_I(\mu_0)$ is compact in the topology of $\Sigma_I$
\item $\mathcal R^\infty_I(\mu_0)$ is compact in the narrow topology.
\end{enumerate}
\end{proposition}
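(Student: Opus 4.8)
The plan is to reduce everything to the compactness results already available for the admissibility set $\mathcal A_I$ (Proposition 3 and Theorem 3 in \cite{CMNP}, Proposition 1 in \cite{CMPbulgaro}), and then to show that the feasibility constraint $\|\theta_\rho\|_{L^\infty(I)}\le\alpha$ is preserved under the relevant limits. For item (1), I would take a sequence $\rho^n=(\boldsymbol\mu^n,\boldsymbol\nu^n)\in\mathcal F^\infty_I$ converging in $\Sigma_I$ to some $\rho=(\boldsymbol\mu,\boldsymbol\nu)$, i.e. $(\mu^n_t,\nu^n_t)\rightharpoonup^*(\mu_t,\nu_t)$ for a.e.\ $t\in I$. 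First I would argue that $\rho\in\mathcal A_I$ using the closedness of the admissibility set; the continuity equation passes to the limit since it is linear and tested against fixed smooth functions, and the velocity constraint $\tfrac{\nu_t}{\mu_t}(x)\in F(x)$ is recovered from lower semicontinuity of the functional $\mathcal J_F$ in Definition \ref{def:adm-traj}. The genuinely new point is the $L^\infty$ bound: I would recall from Definition \ref{def:Linftyfeas} that $\|\theta_\rho\|_{L^\infty(I)}\le\alpha$ is equivalent to $\int_I\mathcal E(\mu_t,\nu_t)\,dt<+\infty$, and then exploit lower semicontinuity of the pair-functional
\[
(\mu,E)\mapsto\int_{\mathbb R^d}\Psi\!\left(x,\tfrac{E}{\mu}(x)\right)d\mu(x)
\]
with respect to joint $w^*$-convergence. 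This lower semicontinuity follows from the convexity of $v\mapsto\Psi(x,v)$ and the lower semicontinuity of $\Psi$ established in Lemma \ref{lemma:bprop}(2)(ii)--(iii), via the standard theory of integral functionals of measures (e.g.\ the Ioffe/Reshetnyak-type lower semicontinuity used in \cite{AGS} and \cite{CMNP}). Passing to the liminf in $\theta_{\rho^n}(t)\le\alpha$ along the a.e.\ convergent times then yields $\theta_\rho(t)\le\alpha$ for a.e.\ $t$, so $\rho\in\mathcal F^\infty_I$.

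For item (2), the extra hypothesis is the uniform moment bound $\mathrm m_p(\mu^n_0)\le C_1$. I would use the moment estimates of Proposition \ref{prop:moments} (Appendix \ref{sec:moments}), which propagate the bound on $\mathrm m_p(\mu_0)$ to a bound on $\mathrm m_p(\mu_t)$ uniform in $t\in I$ and in the trajectory, thanks to the linear growth of $F$. This equi-boundedness of $p$-moments is precisely what guarantees, via Proposition \ref{prop:wassconv}, that $w^*$-convergence of the marginals upgrades to $W_p$-convergence and that no mass escapes to infinity; combined with item (1) it shows the $\Sigma_I$-closure of any such $\mathscr B$ stays inside $\mathcal F^\infty_I$.

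Items (3) and (4) are then compactness statements obtained by adding sequential precompactness to the closedness of (1)--(2). For (3), fixing the initial datum $\mu_0$ gives a single moment bound $C_1=\mathrm m_p(\mu_0)$, so $\mathcal F^\infty_I(\mu_0)\subseteq\mathcal A_I(\mu_0)$ is contained in a set that is already sequentially precompact in $\Sigma_I$ by Theorem 3 in \cite{CMNP}; by item (2) its closure lies in $\mathcal F^\infty_I$, and since it starts from the fixed $\mu_0$ and the limit continuity equation forces the initial marginal to be $\mu_0$, the limit lies in $\mathcal F^\infty_I(\mu_0)$, giving compactness. For (4), I would lift to the probabilistic representation: $\mathcal R^\infty_I(\mu_0)\subseteq\mathcal R^{\mathcal A}_I(\mu_0)$, which is compact in the narrow topology by Proposition 1 in \cite{CMPbulgaro}. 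A narrowly convergent sequence $\boldsymbol\eta^n\rightharpoonup\boldsymbol\eta$ has limit representing some $(\boldsymbol\mu,\boldsymbol\nu)\in\mathcal A_I(\mu_0)$; using Lemma \ref{lemma:equiv-def} to recover the represented pair and the lower semicontinuity argument of item (1) applied to the induced trajectories, I would check the feasibility bound survives, so $\boldsymbol\eta\in\mathcal R^\infty_I(\mu_0)$.

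The main obstacle I anticipate is the lower semicontinuity of the control-effort functional $(\mu,E)\mapsto\int\Psi(x,\tfrac{E}{\mu}(x))\,d\mu$ under joint $w^*$-convergence of $(\mu_t,\nu_t)$: one must handle the Radon--Nikodym density carefully, since $\tfrac{\nu^n_t}{\mu^n_t}$ need not converge pointwise even when the measures do. The clean way is to recast $\Psi$ as a one-homogeneous recession-type integrand in the $(\mu,E)$ variables, exploiting that $\Psi(x,\cdot)$ is convex and l.s.c.\ and that the constraint set $F(x)$ has the affine structure of Definition \ref{def:setting}, so that the effort integral becomes a convex, positively one-homogeneous, l.s.c.\ functional of the vector measure $E=\nu_t$ relative to $\mu_t$; lower semicontinuity then follows from the joint lower semicontinuity theorem for such functionals in \cite{AGS}. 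Everything else is a routine assembly of known compactness facts with this semicontinuity input.
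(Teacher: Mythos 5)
Your proposal is correct and follows essentially the same route as the paper: the key input is the lower semicontinuity of $(\mu,E)\mapsto\int\Psi(x,\tfrac{E}{\mu}(x))\,d\mu$ under joint $w^*$-convergence, obtained from the convexity and lower semicontinuity of $\Psi$ in Lemma \ref{lemma:bprop} via the Reshetnyak/Buttazzo-type theory of convex one-homogeneous integral functionals of measures (the paper computes the recession function $\Psi_\infty$ explicitly and cites \cite{But} and Theorem 2.34 in \cite{AFP}), after which the a.e.-pointwise constraint passes to the limit and compactness is inherited from the known results for $\mathcal A_I(\mu_0)$ and $\mathcal R^{\mathcal A}_I(\mu_0)$ together with the moment bounds. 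The only cosmetic difference is that the paper encodes feasibility through the indicator-composed functional $\mathcal E$ and Fatou's lemma rather than taking liminf directly in $\theta_{\rho^n}(t)\le\alpha$, which is the same argument.
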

\begin{proof}
The proof is essentially based on the variational characterization of the feasibility constraint.\par\medskip\par
\emph{Step 1: }The functional $\mathscr F:\mathscr P(\mathbb R^d)\times\mathscr M(\mathbb R^d;\mathbb R^d)\to[0,+\infty]$ defined by
\begin{equation}\label{eq:feasEreduced}
\mathscr F(\mu,E):=\begin{cases}
\displaystyle\int_{\mathbb R^d}\Psi\left(x,\frac{E}{\mu}(x)\right)\,d\mu(x),
&\textrm{if }|E|\ll \mu,\\ \\
+\infty,&\textrm{otherwise}\end{cases}
\end{equation}
is l.s.c. w.r.t. w$^*$-convergence.\par\medskip\par
\emph{Proof of Step 1: }By Lemma~\ref{lemma:bprop}, the function $\Psi:\mathbb R^d\times\mathbb R^d\to [0,+\infty]$ is l.s.c. and  $\Psi(x,\cdot)$ is convex for any $x\in\mathbb R^d$ with bounded domain.
So adopting the notation in \cite{But}, we have that $\Psi_\infty(x,v)=0$ if $v=0$ and $\Psi_\infty(x,v)=+\infty$ if $v\neq 0$, where $\Psi_\infty(x,\cdot)$ 
denotes the \emph{recession function} for $\Psi(x,\cdot)$. By l.s.c. of $F$, there exists a continuous selection $z:\mathbb R^d\to\mathbb R^d$ of $F$ (Michael's Theorem). 
Thus, by continuity of $\Psi(\cdot,\cdot)$ in $\mathrm{Graph}\,F$ (see Lemma \ref{lemma:bprop}), we have that $x\mapsto\Psi(x,z(x))$ is continuous and finite. 
We conclude by Lemma 2.2.3, p. 39, Theorem 3.4.1, p.115, and Corollary 3.4.2 in \cite{But} or Theorem 2.34 in \cite{AFP}.\hfill$\diamond$\par\medskip\par
\emph{Step 2: }Let $\rho^n:=(\boldsymbol\mu^n,\boldsymbol\nu^n)\in\mathcal F^\infty_I$ for all $n\in\mathbb N$, $\rho:=(\boldsymbol\mu,\boldsymbol\nu)\in\Sigma_I$
be such that $\rho^n$ converges to $\rho$ in $\Sigma_I$. Then $\rho\in\mathcal F^\infty_I$.\par\medskip\par
\emph{Proof of Step 2: }By convexity and l.s.c. of the indicator function $I_{[0,\alpha]}(\cdot)$ and l.s.c. of $\mathscr F(\cdot,\cdot)$, we have that the functional $\mathcal E(\cdot,\cdot)$ is l.s.c w.r.t. w$^*$-convergence. 
The l.s.c. of the functional $\mathcal J_F(\cdot,\cdot)$, defined in Definition~\ref{def:adm-traj}$(A2)$, was already proved in Lemma 3 in \cite{CMNP}.
By Proposition 3 in \cite{CMNP} we have $\rho\in\mathcal A_I$.
Now, fix $t\in I$ such that $(\mu^n_t,\nu^n_t)\rightharpoonup^*(\mu_t,\nu_t)$ and $\mathcal E(\mu^n_t,\nu^n_t)+\mathcal J_F(\mu^n_t,\nu^n_t)=0$ for all $n\in\mathbb N$. By l.s.c. of $\mathcal E$, $\mathcal J_F$ and the fact that $\mathcal E,\mathcal J_F\ge 0$, we have  
\[\displaystyle 0\le \mathcal E(\mu_t,\nu_t)+\mathcal J_F(\mu_t,\nu_t)\le \liminf_{n\to +\infty} \mathcal E(\mu_t^n,\nu_t^n)+\mathcal J_F(\mu_t^n,\nu_t^n)=0.\]
By applying Fatou's lemma we deduce that \[\int_I\left(\mathcal E(\mu_t,\nu_t)+\mathcal J_F(\mu_t,\nu_t)\right)\,dt=0,\] hence $\rho\in\mathcal F^\infty_I$.\hfill$\diamond$\par\medskip\par
All the assertion now follows recalling that uniformly boundedness of the moments along a sequence implies existence of a narrowly convergent subsequence (see e.g. Chapter 5 in \cite{AGS}).
\end{proof}

We pass now to the analysis of the value function and of the Dynamic Programming Principle in this setting.
To this aim, we will refer to the abstract results proved in Section \ref{sec:GF}.

\begin{definition}[Concatenation and restriction]
\begin{enumerate}
\item[]
\item Let $I_i=[a_i,b_i]\subset\mathbb R$, $i=1,2$, with $b_1=a_2$, and $I:=I_1\cup I_2$. Let $\rho^i=(\boldsymbol\mu^i,\boldsymbol\nu^i)\in\mathcal F^\infty_{I_i}$ with $\mu^1_{b_1}=\mu^2_{a_2}$. The \emph{concatenation} $\rho^1\star\rho^2=(\boldsymbol\mu,\boldsymbol\nu)$ of $\rho^1$ and $\rho^2$ is defined by setting $\mu_t=\mu_t^i$ and $\nu_t=\nu_t^i$ when $t\in I_i$ for $i=1,2$, and noticing that we can assume $\nu^1_{b_1}=\nu^2_{a_2}$ by changing the driving vector field in a $\mathscr L^1$-negligible set. We recall that this implies that $\rho^1\star\rho^2\in\mathcal F^\infty_I$. Indeed, by Lemma 4.4 in \cite{DNS} we have that the set of solutions of the continuity equation is closed w.r.t. juxtaposition operations. The admissibility property of the resulting trajectory follows straightforwardly, as observed also in Theorem 6 in \cite{CMNP} and so does the feasibility one.
\item Let $\rho=(\boldsymbol\mu,\boldsymbol\nu)\in\mathcal F^\infty_I$. The \emph{restriction} $\rho_{|I_1}$ of $\rho$ to a compact and nonempty interval $I_1\subset I$, where $\rho_{|I_1}=(\boldsymbol\mu^1=\{\mu^1_t\}_{t\in I_1}, \boldsymbol\nu^1=\{\nu^1_t\}_{t\in I_1})$ is defined by setting $\mu^1_t:=\mu_t$ and $\nu^1_t:=\nu_t$ for all $t\in I_1$. Clearly we have $\rho_{|I_1}\in\mathcal F^\infty_{I_1}$. 
\end{enumerate}
\end{definition}

\begin{remark}\label{rem:extFeas}
We can always extend an $\alpha$-feasible trajectory $(\boldsymbol\mu^{(1)},\boldsymbol\nu^{(1)})\in\mathcal F^\infty_{[a,b]}$ to an $\alpha$-feasible
trajectory defined in the extended time-interval $[a,c]$, for any $0\le a\le b\le c$. It is sufficient to take $\bar v_t(\cdot)=f(\cdot,0)$ for
all $t\in[b,c]$ and to consider the solution $\boldsymbol\mu^{(2)}=\{\mu^{(2)}_t\}_{t\in[b,c]}$ of the continuity equation
$\partial_t\mu_t+\mathrm{div}(\nu_t^{(2)})=0$ for $t\in[b,c]$, with $\mu_{|t=b}=\mu^{(1)}_{|t=b}$, $\nu_t^{(2)}=\bar v_t\mu_t$.
We have that $(\boldsymbol\mu,\boldsymbol\nu):=(\boldsymbol\mu^{(1)},\boldsymbol\nu^{(1)})\star(\boldsymbol\mu^{(2)},\boldsymbol\nu^{(2)})$ is an $\alpha$-feasible trajectory on $[a,c]$.
\end{remark}

\begin{definition}\label{def:LinftyGF}
Let $c$ and $c_f$ be as in Definition \ref{def:GFobjects}, satisfying the following additional properties
\begin{enumerate}
\item[$\boldsymbol{(C_3)}$] $c(\mu^{(1)},\mu^{(2)}, \hat\rho)<+\infty$ if and only if $\hat\rho=(\boldsymbol{\hat\mu},\boldsymbol{\hat\nu})\in\mathcal F^\infty_I(\mu^{(1)})$, with $\hat\mu_{|t=\max I}=\mu^{(2)}$ for some compact and nonempty interval $I\subset\mathbb R$;
\item[$\boldsymbol{(C_4)}$] let $0\le a\le b\le c$, $\rho=(\boldsymbol{\mu},\boldsymbol{\nu})\in\mathcal F^\infty_{[a,c]}$. Then $c:X\times X\times \Sigma\to[0,+\infty]$ is superadditive by restrictions, i.e.
\[c(\mu_a,\mu_c,\rho)\ge c(\mu_a,\mu_b,\rho_{|[a,b]})+c(\mu_b,\mu_c,\rho_{|[b,c]}).\]
\end{enumerate}
Let $\bar\mu\in\mathscr P_p(\mathbb R^d)$, $I\subset\mathbb R$ nonempty and compact interval, and $\rho=(\boldsymbol\mu,\boldsymbol\nu)\in\mathcal F^\infty_I(\bar\mu)$. We define the set $\mathscr G^\infty_{\rho}$ made of the pairs $(\gamma,\sigma)$ defined as follows
\begin{enumerate}
\item $\gamma:I\to X$, $\gamma(t):=\mu_t$ for all $t\in I$;
\item $\sigma:I\to\Sigma$, $\sigma(t):=\rho_{|[\min I,t]}$ for all $t\in I$.
\end{enumerate}
Finally, we define the set
\[\mathscr G_I^\infty(\bar\mu):=\left\{(\gamma,\sigma)\in\mathscr G_{\rho}^\infty\,:\,\rho\in\mathcal F^\infty_I(\bar\mu)\right\}.\]
\end{definition}

\begin{theorem}[DPP for $L^\infty$-time feasibility case]\label{thm:DPPinfty}
Let $V:\mathscr P_p(\mathbb R^d)\to [0,+\infty]$ be as in Definition \ref{def:GFobjects}. For any $\mu_0\in\mathscr P_p(\mathbb R^d)$ we have
\[V(\mu_0)=\inf_{\substack{\rho=(\boldsymbol\mu,\boldsymbol\nu)\in\mathcal F^\infty_I(\mu_0)\\ I\subseteq \mathbb R \textrm{ compact interval}}}\left\{c(\mu_0,\mu_{|t=\max I},\rho)+V(\mu_{|t=\max I})\right\}.\]
\end{theorem}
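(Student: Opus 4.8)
The plan is to deduce this statement directly from the abstract Dynamic Programming Principle (Theorem \ref{thm:DPP}), combined with the structural characterization $(C_3)$ of when the transition cost is finite. First I would note that, by Definition \ref{def:LinftyGF}, the pair $(c,c_f)$ is in particular a generalized control system in the sense of Definition \ref{def:GFobjects}, so conditions $(C_1)$ and $(C_2)$ hold. Hence Theorem \ref{thm:DPP} applies verbatim with $x=\mu_0$ and gives
\[V(\mu_0)=\inf_{\substack{y\in X\\ \sigma\in\Sigma}}\left\{c(\mu_0,y,\sigma)+V(y)\right\}.\]
All that remains is to rewrite this abstract infimum as the concrete infimum over $\alpha$-feasible trajectories.

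For the rewriting step I would split the index set into the pairs $(y,\sigma)$ with $c(\mu_0,y,\sigma)<+\infty$ and those with $c(\mu_0,y,\sigma)=+\infty$. On the latter set, since $c_f\ge 0$ forces $V\ge 0$, the summand $c(\mu_0,y,\sigma)+V(y)$ equals $+\infty$, so this set contributes only the value $+\infty$ and may be discarded without changing the infimum; this is legitimate regardless of whether the finite-cost set is empty, since in the degenerate case both sides equal $+\infty$ (with the convention $\inf\emptyset=+\infty$). By property $(C_3)$, the finite-cost pairs are exactly those of the form $\sigma=\rho=(\boldsymbol\mu,\boldsymbol\nu)\in\mathcal F^\infty_I(\mu_0)$ for some compact nonempty interval $I\subset\mathbb R$, with $y=\mu_{|t=\max I}$ necessarily the terminal measure of $\rho$. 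Substituting this parametrization yields precisely
\[V(\mu_0)=\inf_{\substack{\rho=(\boldsymbol\mu,\boldsymbol\nu)\in\mathcal F^\infty_I(\mu_0)\\ I\subseteq\mathbb R\textrm{ compact interval}}}\left\{c(\mu_0,\mu_{|t=\max I},\rho)+V(\mu_{|t=\max I})\right\}.\]

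The argument is essentially bookkeeping once the abstract principle is available, and I do not expect a substantial obstacle; the only point requiring care is the correct handling of the $+\infty$ values, namely that the nonnegativity of $V$ makes dropping the infinite-cost pairs exact. For completeness I would also observe that the feasible family is never empty: taking $v_t(\cdot)=f(\cdot,0)$ gives $\Psi(x,f_0(x))=0\le\alpha$ because $0\in U$, so the uncontrolled trajectory is always $\alpha$-feasible and the right-hand infimum is genuinely over a nonempty set. I note that the superadditivity property $(C_4)$ is \emph{not} needed for this statement; it enters instead in the sharper monotonicity and optimality results of Corollary \ref{cor:DPPadm} and Corollary \ref{cor:DPPadm2}, where it supplies the $(C_2)$-type decomposition along restrictions of a single feasible trajectory.
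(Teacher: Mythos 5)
Your proposal is correct and follows exactly the paper's route: the paper's own proof is the one-line remark that the result follows from Theorem \ref{thm:DPP} together with $\boldsymbol{(C_3)}$, and your argument simply spells out the bookkeeping (discarding infinite-cost pairs using $V\ge 0$, reparametrizing via $\boldsymbol{(C_3)}$, and noting nonemptiness of the feasible family) that the paper leaves implicit. Your observation that $\boldsymbol{(C_4)}$ is not needed for this particular statement is also accurate.
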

\begin{proof}
The proof follows by Theorem \ref{thm:DPP} and $\boldsymbol{(C_3)}$.
\end{proof}

\begin{remark}\label{rem:LinftyGF}
Any $(\gamma,\sigma)\in\mathscr G_{\rho}^\infty$, s.t. $\rho\in\mathcal F^\infty_I(\bar\mu)$, is generalized admissible from $\bar\mu$, according to Definition \ref{def:adm}. 
It is sufficient to take $\sigma_{t_1\to t_2}:=\rho_{|[t_1,t_2]}$ for any $0\le t_1\le t_2\le T$, and observe that $\boldsymbol{(C_4)}$ implies item $(3)$ in Definition \ref{def:adm}. 
Thus, Corollaries \ref{cor:DPPadm}, \ref{cor:DPPadm2} hold in this framework. 
Furthermore, since we can indentify any $\hat\rho\in\mathcal F^\infty_{[t,t]}$ with its restriction $\hat\rho_{|[t,t]}$, then by $\boldsymbol{(C_4)}$ we have that $c(\gamma(t),\gamma(t),\sigma_{t\to t})=0$ for all $t\in I$.
\end{remark}

\begin{proposition}[Existence of minimizers]\label{prop:existenceinfty}
Assume properties $\boldsymbol{(C_1)}-\boldsymbol{(C_4)}$. Let $p\ge 1$ and $\mu_0\in\mathscr P_p(\mathbb R^d)$. If $c(\mu_0,\cdot,\cdot)$ and $c_f$ are l.s.c. w.r.t. $w^*$-convergence and $V(\mu_0)<+\infty$, then there exist $I\subset \mathbb R$ nonempty and compact interval and an optimal trajectory $(\gamma,\sigma)\in\mathscr G^\infty_I(\mu_0)$, according to Definition \ref{def:GFopt}.
\end{proposition}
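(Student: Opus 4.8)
The plan is to reduce the statement to a \emph{direct method} (existence of minimizers) for the infimum defining $V(\mu_0)$, and then to promote such a minimizer to an optimal trajectory through the abstract results of Section \ref{sec:GF}. Concretely, suppose we can produce a feasible trajectory $\rho^*=(\boldsymbol\mu^*,\boldsymbol\nu^*)\in\mathcal F^\infty_{I^*}(\mu_0)$, defined on a compact interval $I^*=[0,T^*]$, with terminal measure $\mu^*:=\mu^*_{T^*}$, such that $V(\mu_0)=c(\mu_0,\mu^*,\rho^*)+c_f(\mu^*)$. Associate with $\rho^*$ the pair $(\gamma,\sigma)\in\mathscr G^\infty_{\rho^*}$ of Definition \ref{def:LinftyGF}. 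By Remark \ref{rem:LinftyGF} this pair is generalized admissible and satisfies $\inf_{\sigma\in\Sigma}c(\mu^*,\mu^*,\sigma)=0$; since $\mu^*=\gamma(T^*)$ and $\rho^*=\sigma(T^*)$ realize the infimum in $V(\mu_0)$, the third item of Corollary \ref{cor:DPPadm2} shows that $(\gamma,\sigma)\in\mathscr G^\infty_{I^*}(\mu_0)$ is optimal in the sense of Definition \ref{def:GFopt}. Hence the entire task is to attain the infimum in $V(\mu_0)$.

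First I would set up the minimizing sequence. Since $V(\mu_0)<+\infty$, property $\boldsymbol{(C_3)}$ rewrites the infimum in Definition \ref{def:GFobjects} as an infimum over feasible trajectories, so I would choose $\rho^n=(\boldsymbol\mu^n,\boldsymbol\nu^n)\in\mathcal F^\infty_{[0,T_n]}(\mu_0)$ (feasibility is invariant under time translation, so every trajectory may be taken to start at time $0$) with terminal data $\mu^n:=\mu^n_{T_n}$ such that $c(\mu_0,\mu^n,\rho^n)+c_f(\mu^n)\to V(\mu_0)$. Up to a subsequence I may assume $T_n\to T^*\in[0,+\infty]$.

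The heart of the argument, and the main obstacle, is reconciling the \emph{variable time horizon} $T_n$ with the \emph{fixed-interval} compactness of Proposition \ref{prop:topprop}. Working in the substantive case $T^*<+\infty$, I would fix $T\ge\sup_n T_n$ and extend each $\rho^n$ to $\hat\rho^n\in\mathcal F^\infty_{[0,T]}(\mu_0)$ by appending the zero-control evolution $f(\cdot,0)$ on $[T_n,T]$, as in Remark \ref{rem:extFeas}; this leaves the restriction to $[0,T_n]$ unchanged. By Proposition \ref{prop:topprop}(3) the set $\mathcal F^\infty_{[0,T]}(\mu_0)$ is compact in $\Sigma_{[0,T]}$, so along a further subsequence $\hat\rho^n\rightharpoonup^*\hat\rho^*=(\boldsymbol{\hat\mu}^*,\boldsymbol{\hat\nu}^*)\in\mathcal F^\infty_{[0,T]}(\mu_0)$, and I would set $\rho^*:=\hat\rho^*_{|[0,T^*]}$ and $\mu^*:=\hat\mu^*_{T^*}$. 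Since a.e.\ pointwise $w^*$-convergence does not control the terminal datum at the \emph{moving} instant $T_n$, here I would invoke the uniform moment bounds of Proposition \ref{prop:moments}: they make the curves $\{\boldsymbol{\hat\mu}^n\}$ equi-continuous with values in a $W_p$-precompact set (Proposition \ref{prop:wassconv}), so an Arzel\`a--Ascoli argument yields uniform $W_p$-convergence and hence $\mu^n=\hat\mu^n_{T_n}\to\hat\mu^*_{T^*}=\mu^*$ because $T_n\to T^*$.

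Finally I would pass to the limit. Lower semicontinuity of $c_f$ and of $c(\mu_0,\cdot,\cdot)$ with respect to $w^*$-convergence, combined with the convergence of the (extended, then restricted) trajectories and of the terminal data just obtained, gives $c(\mu_0,\mu^*,\rho^*)+c_f(\mu^*)\le\liminf_n\big(c(\mu_0,\mu^n,\rho^n)+c_f(\mu^n)\big)=V(\mu_0)$, while $(\mu^*,\rho^*)$ is admissible in the infimum defining $V(\mu_0)$, so the reverse inequality holds and the reduction of the first paragraph concludes. The degenerate case $T^*=0$ is handled separately: the limit collapses to the constant trajectory at $\mu_0$ on $I^*=\{0\}$, where $c(\mu_0,\mu_0,\cdot)=0$, and l.s.c.\ of $c_f$ forces $c_f(\mu_0)\le\liminf_n c_f(\mu^n)\le V(\mu_0)\le c_f(\mu_0)$, so the trivial trajectory is optimal. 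I expect the two delicate points to be (i) ensuring that the minimizing horizons can be taken bounded, i.e.\ that $T^*<+\infty$, and (ii) making the passage to the limit of $c$ genuinely compatible with the varying horizons through superadditivity $\boldsymbol{(C_4)}$ and the l.s.c.\ hypothesis on $c(\mu_0,\cdot,\cdot)$.
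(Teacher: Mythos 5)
Your overall architecture matches the paper's: a minimizing sequence from $\boldsymbol{(C_3)}$, extension of the trajectories to a common compact interval via Remark \ref{rem:extFeas}, compactness of $\mathcal F^\infty_{[0,T]}(\mu_0)$ from Proposition \ref{prop:topprop}, lower semicontinuity to pass to the limit, and Corollary \ref{cor:DPPadm2}\eqref{item3:corDPP2} to upgrade the minimizer to an optimal trajectory. The gap sits exactly in the step you flag as ``delicate point (ii)'': the way you cut the limit trajectory makes the cost comparison run in the wrong direction. You extend each $\rho^n$ to $[0,T]$ with $T\ge\sup_n T_n$ and then restrict the \emph{limit} to $[0,T^*]$ with $T^*=\lim_n T_n$. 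To close the argument you need $c(\mu_0,\hat\mu^n_{T^*},\hat\rho^n_{|[0,T^*]})\le c(\mu_0,\mu^n,\rho^n)+o(1)$. But whenever $T_n<T^*$ (e.g.\ $T_n\nearrow T^*$) the interval $[0,T^*]$ strictly contains $[0,T_n]$, and superadditivity $\boldsymbol{(C_4)}$ only yields $c(\mu_0,\hat\mu^n_{T^*},\hat\rho^n_{|[0,T^*]})\ge c(\mu_0,\mu^n,\rho^n)$: the cost of a trajectory dominates that of its initial piece, not vice versa. The axioms $\boldsymbol{(C_1)}$--$\boldsymbol{(C_4)}$ give no upper bound on the cost of the appended zero-control piece on $[T_n,T^*]$, so it cannot be absorbed into an $o(1)$. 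Hence the chain $c(\mu_0,\mu^*,\rho^*)\le\liminf_n c(\mu_0,\hat\mu^n_{T^*},\hat\rho^n_{|[0,T^*]})$ does not connect to $\liminf_n\bigl(c(\mu_0,\mu^n,\rho^n)+c_f(\mu^n)\bigr)=V(\mu_0)$.

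The paper's fix is to restrict in the opposite direction: set $T=\liminf_n T_n$, extract a subsequence with $T_{n_k}\ge T-\tfrac1k$, and work with the restrictions $\rho^{n_k}_{|[0,T-\frac1k]}$, which are genuine initial pieces of the original trajectories; then $\boldsymbol{(C_4)}$ (via item \eqref{item3:admGF} of Definition \ref{def:adm}) gives $c(\mu_0,\mu^{n_k},\rho^{n_k})\ge c(\mu_0,\hat\mu^{n_k}_{|t=T-\frac1k},\hat\rho^{n_k}_{|[0,T-\frac1k]})$, which is the inequality needed before invoking l.s.c.; the zero-control extension on $[T-\tfrac1k,T]$ is then used only to place everything in the compact set $\mathcal F^\infty_{[0,T]}(\mu_0)$ and never enters a cost estimate. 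Your Arzel\`a--Ascoli argument for the convergence of the terminal data and your treatment of the degenerate case are fine, and your point (i) (finiteness of the limiting horizon) is left implicit in the paper as well; but without the ``restrict to $[0,T-\tfrac1k]\subseteq[0,T_{n_k}]$'' device the passage to the limit of the running cost does not close.
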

\begin{proof}
By finiteness of $V(\mu_0)$ and $\boldsymbol{(C_3)}$, we have that for all $n\in\mathbb N$, $n>0$, there exist $(\mu^n,\rho^n)\in X\times\Sigma$ and $I^n\subset \mathbb R$ non empty compact interval such that $\rho^n=(\boldsymbol\mu^n,\boldsymbol\nu^n)\in\mathcal F^\infty_{I^n}(\mu_0)$ with $\mu^n_{|t=\max I^n}=\mu^n$ and
\[V(\mu_0)+\frac{1}{n}\ge c(\mu_0,\mu^n,\rho^n)+c_f(\mu^n).\]

Let $(\gamma^n,\sigma^n)\in\mathscr G^\infty_{\rho^n}$.
Without loss of generality, we can assume $I^n=[0,T_n]$ for all $n\in\mathbb N$. Let $T=\liminf_{n\to+\infty}T_n$, then there exists a subsequence such that $T=\lim_{k\to+\infty}T_{n_k}$ and $T_{n_k}\ge T-\dfrac{1}{k}$ for all $k\ge1$.

Let us consider the restrictions $\rho^{n_k}_{|[0,T-\frac{1}{k}]}\in\mathcal F^\infty_{[0,T-\frac{1}{k}]}(\mu_0)$ and any of their extensions in $[T-\frac{1}{k},T]$ preserving the feasibility constraint (see Remark \ref{rem:extFeas}). 
Denote with $\hat\rho^{nk}:=(\boldsymbol{\hat\mu}^{nk},\boldsymbol{\hat\nu}^{nk})\in \mathcal F^\infty_{[0,T]}(\mu_0)$ such an object. By compactness of $\mathcal F^\infty_{[0,T]}(\mu_0)$ proved 
in Proposition \ref{prop:topprop}, $\hat\rho^{nk}\rightharpoonup^*\hat\rho:=(\boldsymbol{\hat\mu},\boldsymbol{\hat\nu})\in\mathcal F^\infty_{[0,T]}(\mu_0)$, i.e. $(\hat\mu^{nk}_t,\hat\nu^{nk}_t)\rightharpoonup^*(\hat\mu_t,\hat\nu_t)$ for a.e. $t\in[0,T]$. 
Furthermore, similarly to the proof of Theorem 3 in \cite{CMNP}, by the standing assumption in Definition \ref{def:setting} and Remark \ref{rem:basic estim}, for any $k$ there exists a sequence $t_i\to T$ such that $\hat\nu^{nk}_{t_i}\rightharpoonup^*\hat\nu^{nk}_T$ for $i\to+\infty$, and $\hat\mu^{nk}_{t_i}\rightharpoonup^*\hat\mu^{nk}_T$ by absolute continuity of $\boldsymbol{\hat\mu}^{nk}$.

By a diagonal argument, we have that $\hat\rho^{nk}_{|[0,T-\frac{1}{k}]}\rightharpoonup^*\hat\rho$, up to subsequences. Property \eqref{item3:admGF} in Definition \ref{def:adm} leads to
\[c(\mu_0,\mu^{n_k},\rho^{n_k})\ge c(\mu_0,\hat\mu^{nk}_{|t=T-\frac{1}{k}},\hat\rho^{nk}_{|[0,T-\frac{1}{k}]}).\]
By passing to the limit up to subsequences we get
\[V(\mu_0)=c(\mu_0,\hat\mu,\hat\rho)+c_f(\hat\mu).\]
Thus, $(\gamma,\sigma)\in\mathscr G^\infty_{\hat\rho}$ is optimal by Corollary \ref{cor:DPPadm2}\eqref{item3:corDPP2}.
\end{proof}

In Section \ref{sec:examples} we see a concrete example of cost and value functions satisfying $\boldsymbol{(C_1)}-\boldsymbol{(C_4)}$.

\medskip

The last part of this subsection is devoted to the characterization of the set of initial velocities for feasible trajectories in $\mathscr P_2(\mathbb R^d)$. 
This is a fundamental ingredient to deal with the formulation of an Hamilton-Jacobi-Bellman equation for this problem, which is discussed in Section \ref{sec:HJBinfty}.

\begin{definition}\label{def:ivsetLinfty}
Let $\mu\in\mathscr P_2(\mathbb R^d)$, $0\le s\le T$.
\begin{enumerate}
\item Given $\boldsymbol\eta\in\mathcal R^\infty_{[s,T]}(\mu)$, we define
\begin{equation*}
\mathscr V_{[s,T]}(\boldsymbol\eta):=\left\{w_{\boldsymbol\eta}\in L^2_{\boldsymbol\eta}(\mathbb R^d\times\Gamma_{[s,T]};\mathbb R^d)\,:\,
\begin{array}{l}\exists\{t_i\}_{i\in\mathbb N}\subseteq]s,T[, \textrm{ with }t_i\to s^+ \textrm{ and}\\ \\\displaystyle\frac{e_{t_i}-e_s}{t_i-s}\rightharpoonup w_{\boldsymbol\eta} \textrm{ weakly in }L^2_{\boldsymbol\eta}(\mathbb R^d\times\Gamma_{[s,T]};\mathbb R^d)\end{array}\right\},
\end{equation*}
and $\mathscr V_{[s,T]}(\boldsymbol\eta)\neq\emptyset$ as already observed in Definition 11 in \cite{CMNP}.
\item We set
\begin{equation*}
\mathscr V^\infty_{[s,T]}(\mu):=\left\{x\mapsto\int_{\Gamma^x_{[s,T]}}w_{\boldsymbol\eta}(x,\gamma)\,d\eta_x\,:\,\boldsymbol\eta\in\mathcal R^\infty_{[s,T]}(\mu),\,
w_{\boldsymbol\eta}\in \mathscr V_{[s,T]}(\boldsymbol\eta)\right\},
\end{equation*}
where we denoted with $\{\eta_x\}_{x\in\mathbb R^d}$ the disintegration of $\boldsymbol\eta$ w.r.t. the map $e_s$.
\item We define the set
\begin{equation*}
\mathscr Z^\infty(\mu):=\left\{v\in L^2_\mu(\mathbb R^d;\mathbb R^d)\,:\,\begin{array}{l}v(x)\in F(x) \textrm{ for }\mu\textrm{-a.e. }x\in\mathbb R^d \\\textrm{and }\displaystyle\int_{\mathbb R^d}\Psi(x,v(x))\,d\mu(x)\le\alpha\end{array}\right\}.
\end{equation*}
\end{enumerate}
\end{definition}

\begin{lemma}[Initial velocity]\label{lem:carattVelinfty}
Let $\mu\in\mathscr P_2(\mathbb R^d)$, $0\le s\le T$. Then $\mathscr V^\infty_{[s,T]}(\mu)\equiv\mathscr Z^\infty(\mu)$.
\end{lemma}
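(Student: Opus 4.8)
The plan is to prove the two inclusions separately, and the ``easy'' one is $\mathscr Z^\infty(\mu)\subseteq\mathscr V^\infty_{[s,T]}(\mu)$, which I would deduce directly from Lemma \ref{Lemma:initialVel1}. Given $v\in\mathscr Z^\infty(\mu)$, I would first select a Borel map $u_0:\mathbb R^d\to U$ with $v(x)=f(x,u_0(x))$ and $\Psi(x,v(x))=|u_0(x)|$ (as in Lemma \ref{lemma:MinContr}), and then run the construction of Lemma \ref{Lemma:initialVel1} started at time $s$, i.e.\ $\boldsymbol\eta=\mu\otimes\delta_{\gamma_x}$ with $\gamma_x(t)=x+\int_s^t f(\gamma_x(r),u_0(x)e^{-(r-s)})\,dr$. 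Property $(3)$ of that lemma guarantees $\int_{\mathbb R^d}\Psi(y,\tfrac{\nu_\tau}{\mu_\tau}(y))\,d\mu_\tau\le\int_{\mathbb R^d}\Psi(x,v(x))\,d\mu\le\alpha$ for every $\tau$, so the resulting pair is $\alpha$-feasible and $\boldsymbol\eta\in\mathcal R^\infty_{[s,T]}(\mu)$. Since $\boldsymbol\eta$ is deterministic one has $\eta_x=\delta_{(x,\gamma_x)}$, the difference quotients $\tfrac{\gamma_x(t)-x}{t-s}$ converge (strongly in $L^2_{\boldsymbol\eta}$, by the moment bounds of Proposition \ref{prop:moments}) to $w_{\boldsymbol\eta}(x,\gamma)=\dot\gamma_x(s)=v(x)$, and the barycenter $\int_{\Gamma^x_{[s,T]}}w_{\boldsymbol\eta}\,d\eta_x$ equals $v(x)$. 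Hence $v\in\mathscr V^\infty_{[s,T]}(\mu)$.

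For the reverse inclusion $\mathscr V^\infty_{[s,T]}(\mu)\subseteq\mathscr Z^\infty(\mu)$, fix $\boldsymbol\eta\in\mathcal R^\infty_{[s,T]}(\mu)$ representing a feasible pair $(\boldsymbol\mu,\boldsymbol\nu)\in\mathcal F^\infty_{[s,T]}(\mu)$, a weak limit $w_{\boldsymbol\eta}\in\mathscr V_{[s,T]}(\boldsymbol\eta)$ along some $t_i\to s^+$, and set $v(x):=\int_{\Gamma^x_{[s,T]}}w_{\boldsymbol\eta}\,d\eta_x$. Jensen's inequality applied to the disintegration immediately gives $v\in L^2_\mu$. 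The whole claim then reduces to showing $\mathscr F(\mu,v\mu)\le\alpha$, with $\mathscr F$ as in \eqref{eq:feasEreduced}: indeed finiteness of this integral forces $v\mu\ll\mu$ and $v(x)\in\mathrm{dom}\,\Psi(x,\cdot)=F(x)$ for $\mu$-a.e.\ $x$ (since $\Psi=+\infty$ off $\mathrm{Graph}\,F$), which is exactly the defining requirement of $\mathscr Z^\infty(\mu)$.

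To obtain $\mathscr F(\mu,v\mu)\le\alpha$ I would run a \emph{time-averaging} argument, which is the technical heart of the proof. For $h_i:=t_i-s$ I set $\bar\mu^i:=\tfrac1{h_i}\int_s^{t_i}\mu_\tau\,d\tau$ and $\bar\nu^i:=\tfrac1{h_i}\int_s^{t_i}\nu_\tau\,d\tau$. Since $\Psi(x,\cdot)$ is convex (Lemma \ref{lemma:bprop}$(iii)$), the functional $\mathscr F$ is jointly convex in $(\mu,E)$, so Jensen in the time variable together with the feasibility constraint $\mathscr F(\mu_\tau,\nu_\tau)=\theta_\rho(\tau)\le\alpha$ gives $\mathscr F(\bar\mu^i,\bar\nu^i)\le\tfrac1{h_i}\int_s^{t_i}\theta_\rho(\tau)\,d\tau\le\alpha$. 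As $t_i\to s^+$ one has $\bar\mu^i\rightharpoonup^*\mu_s=\mu$ by narrow continuity of $\boldsymbol\mu$, while testing $\bar\nu^i$ against $\varphi\in C^0_c(\mathbb R^d;\mathbb R^d)$ and writing $\int\langle\varphi,d\bar\nu^i\rangle=\int_{\mathbb R^d\times\Gamma}\tfrac1{h_i}\int_s^{t_i}\langle\varphi(\gamma(\tau)),\dot\gamma(\tau)\rangle\,d\tau\,d\boldsymbol\eta$, I would replace $\varphi(\gamma(\tau))$ by $\varphi(\gamma(s))=\varphi\circ e_s$ up to an error controlled by the uniform continuity of $\varphi$ and the continuity of the curves, and then use the defining weak $L^2_{\boldsymbol\eta}$-convergence $\tfrac{e_{t_i}-e_s}{h_i}\rightharpoonup w_{\boldsymbol\eta}$ together with the barycentric identity $\int\langle\varphi\circ e_s,w_{\boldsymbol\eta}\rangle\,d\boldsymbol\eta=\int\langle\varphi(x),v(x)\rangle\,d\mu$ to conclude $\bar\nu^i\rightharpoonup^* v\mu$. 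Finally, the weak-$*$ lower semicontinuity of $\mathscr F$ (Step 1 of Proposition \ref{prop:topprop}) yields $\mathscr F(\mu,v\mu)\le\liminf_i\mathscr F(\bar\mu^i,\bar\nu^i)\le\alpha$, completing the proof.

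The step I expect to be the main obstacle is the identification $\bar\nu^i\rightharpoonup^* v\mu$: this is where the weak (not strong) nature of the convergence defining $\mathscr V_{[s,T]}(\boldsymbol\eta)$ meets the barycentric projection, and care is needed to show both that the error from replacing $\varphi(\gamma(\tau))$ by $\varphi\circ e_s$ vanishes uniformly in $i$ (using the $L^2_{\boldsymbol\eta}$-boundedness of the difference quotients guaranteed by the moment estimates of Proposition \ref{prop:moments}) and that the barycenter commutes with the weak limit under the disintegration w.r.t.\ $e_s$. Everything else — joint convexity and lower semicontinuity of $\mathscr F$, and the feasibility-preserving construction — is already available from Lemma \ref{lemma:bprop}, Proposition \ref{prop:topprop} and Lemma \ref{Lemma:initialVel1}.
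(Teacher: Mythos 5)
Your proof is correct, and the easy inclusion $\mathscr Z^\infty(\mu)\subseteq\mathscr V^\infty_{[s,T]}(\mu)$ coincides with the paper's (both reduce it to items $(1)$ and $(3)$ of Lemma \ref{Lemma:initialVel1}). For the reverse inclusion, however, you take a genuinely different route. The paper works directly on the path space: it cites Lemma 5 in \cite{CMNP} to get that the barycenter is an $L^2_\mu$-selection of $F$, then localizes to balls $\overline{B_n(0)}$, applies lower semicontinuity and convexity of $\Psi(x,\cdot)$ together with Jensen's inequality in time to the difference quotients $\frac{1}{t}\int_0^t\dot\gamma(r)\,dr$, and uses uniform continuity of $\Psi$ on $\mathrm{Graph}\,F$ over compacta (plus a Gronwall bound on $|\gamma(t)-\gamma(0)|$) to trade $\Psi(\gamma(0),\dot\gamma(r))$ for $\Psi(\gamma(r),\dot\gamma(r))$ up to $\varepsilon$, finally letting $\varepsilon\to0^+$ and $n\to\infty$. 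You instead time-average the Eulerian data, $\bar\mu^i=\frac1{h_i}\int_s^{t_i}\mu_\tau\,d\tau$, $\bar\nu^i=\frac1{h_i}\int_s^{t_i}\nu_\tau\,d\tau$, exploit the joint convexity of the perspective-type functional $\mathscr F$ to propagate the bound $\mathscr F(\bar\mu^i,\bar\nu^i)\le\alpha$, identify the weak$^*$ limit $(\mu,v\mu)$ (the identification of $\bar\nu^i\rightharpoonup^* v\mu$ being exactly the computation the paper itself performs in \eqref{eq:convinitvel}--\eqref{eq:critical} for the $L^1$ case), and close with the weak$^*$ lower semicontinuity of $\mathscr F$ from Step 1 of Proposition \ref{prop:topprop}. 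Your route buys two things: it recycles already-established machinery instead of redoing a localized l.s.c. argument by hand, and it delivers the selection property $v(x)\in F(x)$ $\mu$-a.e.\ for free from the finiteness of $\mathscr F(\mu,v\mu)$, with no appeal to Lemma 5 of \cite{CMNP}. The price is that you must actually verify the joint convexity of $\mathscr F$ and the Jensen inequality for time-averages of measure pairs (both standard, e.g.\ via the sup representation $\Psi(x,v)=\sup_h[a_h(x)+\langle v,b_h(x)\rangle]$ already invoked in the paper), and carry out the limit identification for $\bar\nu^i$ carefully; since you flag precisely that step and sketch the correct estimate, I see no gap.
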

\begin{proof}
From items $(1)$ and $(3)$ in Lemma \ref{Lemma:initialVel1}, we deduce that $\mathscr Z^\infty(\mu)\subseteq\mathscr V^\infty_{[s,T]}(\mu)$. 

\medskip

Let us now prove the other inclusion, i.e. $\mathscr V^\infty_{[s,T]}(\mu)\subseteq\mathscr Z^\infty(\mu)$. Without loss of generality, let us consider $s=0$. Let $\boldsymbol\eta\in\mathcal R^\infty_{[0,T]}(\mu)$. For any $(\boldsymbol\mu,\boldsymbol\nu)$ represented by $\boldsymbol\eta$, we have in particular that $(\boldsymbol\mu,\boldsymbol\nu)\in\mathcal A_{[0,T]}$, then $x\mapsto\int_{\Gamma^x_{[0,T]}}w_{\boldsymbol\eta}(x,\gamma)\,d\eta_x$, with $w_{\boldsymbol\eta}\in\mathscr V_{[0,T]}(\boldsymbol\eta)$, is an $L^2_\mu$-selection of $F$ by Lemma 5 in \cite{CMNP} and convexity of $F(x)$. Hence, it remains to prove that
\[\int_{\mathbb R^d}\Psi\left(x,\int_{\Gamma^x_{[0,T]}}w_{\boldsymbol\eta}(x,\gamma)\,d\eta_x\right)\,d\mu(x)\le\alpha.\] 
By feasibility of $(\boldsymbol\mu,\boldsymbol\nu)$, for a.e. $t\in[0,T]$ we have
\[\int_{\mathbb R^d\times\Gamma_{[0,T]}}\Psi(\gamma(t),\dot\gamma(t))\,d\boldsymbol\eta(x,\gamma)=\int_{\mathbb R^d}\Psi\left(x,\frac{\nu_t}{\mu_t}(x)\right)\,d\mu_t(x)\le\alpha.\]

By hypothesis and Lemma \ref{lemma:bprop}, $\Psi(\cdot,\cdot)$ is uniformly continuous on $\mathrm{Graph}\,F$ when the first variable ranges in a compact set $K\subset\mathbb R^d$.

Then, let us fix $n\in\mathbb N$ and consider the closed ball $\overline{B_n(0)}=\overline{B(0,n)}\subset\mathbb R^d$. We have
\begin{align*}
\int_{\overline{B_n(0)}}\Psi\left(x,\int_{\Gamma^x_{[0,T]}}w_{\boldsymbol\eta}(x,\gamma)\,d\eta_x\right)&\,d\mu(x)\le\\
&\le\liminf_{t\to0^+}\int_{\overline{B_n(0)}}\Psi\left(x,\int_{\Gamma^x_{[0,T]}}\frac{e_t-e_0}{t}\,d\eta_x\right)\,d\mu(x)\\
&\le\liminf_{t\to0^+}\int_{\overline{B_n(0)}\times\Gamma_{[0,T]}}\Psi\left(e_0,\frac{e_t-e_0}{t}\right)\,d\boldsymbol\eta(x,\gamma)\\
&=\liminf_{t\to0^+}\int_{\overline{B_n(0)}\times\Gamma_{[0,T]}}\Psi\left(\gamma(0),\frac{1}{t}\int_0^t\dot\gamma(r)\,dr\right)\,d\boldsymbol\eta,
\end{align*}
where we used l.s.c. of $\Psi$ in the second variable and Fatou's Lemma for the first inequality, and Jensen's inequality together with convexity of $\Psi$ in the second variable for the second inequality.
Moreover,
\begin{align*}
\liminf_{t\to0^+}\int_{\overline{B_n(0)}\times\Gamma_{[0,T]}}&\Psi\left(\gamma(0),\frac{1}{t}\int_0^t\dot\gamma(r)\,dr\right)\,d\boldsymbol\eta\le\\
&\le\liminf_{t\to0^+}\frac{1}{t}\int_0^t\int_{\overline{B_n(0)}\times\Gamma_{[0,T]}}\Psi(\gamma(0),\dot\gamma(r))\,d\boldsymbol\eta\,dr\\
&\le\liminf_{t\to0^+}\frac{1}{t}\int_0^t\int_{\overline{B_n(0)}\times\Gamma_{[0,T]}}\Psi(\gamma(r),\dot\gamma(r))\,d\boldsymbol\eta\,dr+\varepsilon\\
&\le\alpha+\varepsilon,
\end{align*}
for any $\varepsilon$ small enough. Where we used again Jensen's inequality and convexity of $\Psi$ in the second variable for the first inequality. Finally we used uniform continuity of $\Psi(\cdot,\cdot)$ in the compact set $\overline{B_n(0)}$, together with uniform continuity of the set of all $\gamma\in\Gamma_{[0,T]}$ s.t. $(\gamma(0),\gamma)\in\mathrm{supp}\,\boldsymbol\eta$. Indeed, by the standing assumption in Definition \ref{def:setting} and compactness of $U$, for all $(x,\gamma)\in\mathrm{supp}\,\boldsymbol\eta$, we have
\begin{align*}
|\gamma(t)-\gamma(0)|&\le\int_0^t|\dot\gamma(s)|\,ds\le C\int_0^t|\gamma(s)|\,ds+Ct\\
&\le C\int_0^t|\gamma(s)-\gamma(0)|\,ds+Ct\,(1+|\gamma(0)|),
\end{align*}
and so, by Gronwall's inequality, and recalling that $\gamma(0)\in\overline{B(0,n)}$,
\begin{equation*}
|\gamma(t)-\gamma(0)|\le Ct\,(1+|\gamma(0)|) e^{Ct}\le CT(1+n)e^{CT}.
\end{equation*}

We conclude by letting $\varepsilon\to0^+$ in the former estimate, and noticing that this holds for all $n\in\mathbb N$, thus by passing to the limit we have
\[\int_{\mathbb R^d}\Psi\left(x,\int_{\Gamma^x_{[0,T]}}w_{\boldsymbol\eta}(x,\gamma)\,d\eta_x\right)\,d\mu(x)=\sup_{n\in\mathbb N}\int_{\overline{B_n(0)}}\Psi\left(x,\int_{\Gamma^x_{[0,T]}}w_{\boldsymbol\eta}(x,\gamma)\,d\eta_x\right)\,d\mu(x)\le\alpha.\]
\end{proof}

\subsection{The $L^1$-time averaged feasibility case}\label{sec:l1mu}\

Let $\alpha\ge 0$ be fixed, $p\ge 1$. Referring to the notation of Section \ref{sec:GF}, we take
\[X=\mathscr P_p(\mathbb R^d)\times[0,+\infty[,\quad \Sigma=\displaystyle\bigcup_{\substack{I\subseteq \mathbb R\\ I\textrm{ compact interval}}}\left[AC(I;\mathscr P_p(\mathbb R^d))\times \mathrm{Bor}(I;\mathscr M(\mathbb R^d;\mathbb R^d))\right]\times[0,+\infty[.\] 
On the set $\Sigma_I:=AC(I;\mathscr P_p(\mathbb R^d))\times \mathrm{Bor}(I;\mathscr M(\mathbb R^d;\mathbb R^d))\times[0,+\infty[$ we will consider the topology given
by the sequentially a.e. $w^*$-convergence and the convergence in $\mathbb R$. More precisely, given $\{\rho^n\}_{n\in\mathbb N}:=\{(\boldsymbol\mu^n,\boldsymbol\nu^n,\omega^n)\}_{n\in\mathbb N}\subseteq\Sigma_I$, 
and  $\rho:=(\boldsymbol\mu,\boldsymbol\nu,\omega)\in\Sigma_I$, we say that \emph{$\rho^n$ converges in $w^*/\mathbb R$ to $\rho$}, and we write $\rho^n\rightharpoonup^{*/\mathbb R}\rho$, if and only if 
$(\mu^n_t,\nu^n_t)\rightharpoonup^*(\mu_t,\nu_t)$ for a.e. $t\in I$, and $\omega^n\to\omega$ in $\mathbb R$.

\begin{definition}[$L^1$-time feasible trajectories]
Given $\rho=(\boldsymbol\mu,\boldsymbol\nu,\omega)\in \mathcal A_I\times[0,+\infty[$, we define
the map $\omega_{\rho}:I\to [0,+\infty]$ by setting
\begin{equation*}
\omega_{\rho}(t):=
\begin{cases}
\displaystyle\omega+\int_{\min I}^t\theta_\rho(s)\,ds,&\textrm{ if }\theta_\rho\in L^1([\min I, t]);\\ \\ 
+\infty,&\textrm{ otherwise},
\end{cases}
\end{equation*}
where $\theta_\rho:I\to [0,+\infty]$ is defined as in Definition \ref{def:Linftyfeas}.\par\medskip\par
Given $(\bar\mu,\bar\omega)\in\mathscr P_p(\mathbb R^d)\times[0,+\infty[$, we set 
\begin{align*}
\mathcal F^1_I(\bar\mu,\bar\omega):=&\{\rho\in \mathcal A_I(\bar\mu)\times[0,+\infty[:\,\omega_\rho(\max I)\le \alpha,\,\omega_\rho(\min I)=\bar\omega\}\\
\mathcal R_I^1(\bar\mu,\bar\omega):=&\left\{\boldsymbol\eta\in\mathscr P(\mathbb R^d\times\Gamma_I)\,:\,\exists (\boldsymbol\mu,\boldsymbol\nu,\omega)\in\mathcal F^1_I(\bar\mu,\bar\omega) \textrm{ s.t. } \boldsymbol\eta \textrm{ represents }(\boldsymbol\mu,\boldsymbol\nu)\right\}.
\end{align*}
and we define the set of \emph{$\alpha$-feasible trajectories} defined on $I$ by
\[\mathcal F^1_I:=\bigcup_{\substack{\bar\mu\in\mathscr P_p(\mathbb R^d)\\ \bar\omega\ge 0}}\mathcal F^1_I(\bar\mu,\bar\omega)\subseteq \Sigma_I.\]
\end{definition}

The counterpart of Proposition \ref{prop:topprop} is the following.

\begin{proposition}\label{prop:topprop2}
Let $I\subseteq \mathbb R$ be a compact nonempty interval, $p\ge 1$, $\mu_0\in\mathscr P_p(\mathbb R^d)$, $C_1\ge 0$, $\omega\in[0,\alpha]$. Then 
\begin{enumerate}
\item $\mathcal F^1_I$ is closed w.r.t. the topology of $\Sigma_I$;
\item for any $\mathscr B\subseteq \mathcal F^\infty_I$, $C_1>0$ such that for all $(\boldsymbol\mu,\boldsymbol\nu)\in\mathscr B$ with $\boldsymbol\mu=\{\mu_t\}_{t\in I}$ it holds
$\mathrm m_p(\mu_0)\le C_1$, we have that the closure of $\mathscr B$ in $\Sigma_I$ is contained in $\mathcal F^\infty_I$;
\item $\mathcal F^1_I(\mu_0,\omega)$ is compact in the topology of $\Sigma_I$
\item $\mathcal R^1_I(\mu_0,\omega)$ is compact in the narrow topology.
\end{enumerate}
\end{proposition}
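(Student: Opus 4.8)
The plan is to follow the same variational strategy as in Proposition \ref{prop:topprop}, the only genuinely new ingredient being the treatment of the cumulative ($L^1$) constraint together with the extra real memory variable $\omega$. First I would record the lower semicontinuity of the integral functional. From Step 1 in the proof of Proposition \ref{prop:topprop} I already have that $\mathscr F(\cdot,\cdot)$ defined in \eqref{eq:feasEreduced} is l.s.c.\ w.r.t.\ $w^*$-convergence; since $\theta_\rho(s)=\mathscr F(\mu_s,\nu_s)$, for every sequence $\rho^n=(\boldsymbol\mu^n,\boldsymbol\nu^n,\omega^n)\rightharpoonup^{*/\mathbb R}\rho=(\boldsymbol\mu,\boldsymbol\nu,\omega)$ and for a.e.\ $s\in I$ one has $\theta_\rho(s)\le\liminf_n\theta_{\rho^n}(s)$. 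By Fatou's lemma (applicable since $\theta_{\rho^n}\ge 0$) this lifts to the integral functional:
\[
\int_I\theta_\rho(s)\,ds\le\liminf_{n\to+\infty}\int_I\theta_{\rho^n}(s)\,ds.
\]

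Then I would prove item (1). Given $\rho^n\in\mathcal F^1_I$ with $\rho^n\rightharpoonup^{*/\mathbb R}\rho$, admissibility of the limit, i.e.\ $(\boldsymbol\mu,\boldsymbol\nu)\in\mathcal A_I$, follows from the l.s.c.\ of $\mathcal J_F$ and Proposition 3 in \cite{CMNP}, exactly as in the $L^\infty$ case. For feasibility, using the convergence $\omega^n\to\omega$ together with the displayed estimate,
\[
\omega_\rho(\max I)=\omega+\int_I\theta_\rho(s)\,ds\le\lim_n\omega^n+\liminf_n\int_I\theta_{\rho^n}(s)\,ds=\liminf_n\omega_{\rho^n}(\max I)\le\alpha,
\]
and $\omega_\rho(\min I)=\omega$ by the very definition of $\omega_\rho$; hence $\rho\in\mathcal F^1_I$. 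Item (2) is the same argument localized to sequences carrying a uniform $p$-moment bound at the initial time, which is precisely what guarantees the existence of a limit point in $\Sigma_I$.

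Finally, for the compactness statements (3) and (4) I would combine closedness with the usual moment estimates. Fixing the initial datum $(\mu_0,\omega)$, Remark \ref{rem:basic estim} and Proposition \ref{prop:moments} give uniform bounds on the $p$-moments of $\mu^n_t$ along any sequence in $\mathcal F^1_I(\mu_0,\omega)$; since here the memory components all equal the fixed value $\omega$, their convergence is automatic, and the compactness of $\mathcal A_I(\mu_0)$ (Theorem 3 in \cite{CMNP}) yields a subsequence converging in $\Sigma_I$. The limit lies in $\mathcal F^1_I(\mu_0,\omega)$ by item (1), which gives (3). Statement (4) follows identically, replacing the compactness of $\mathcal A_I(\mu_0)$ with that of $\mathcal R_I^{\mathcal A}(\mu_0)$ (Proposition 1 in \cite{CMPbulgaro}) and using narrow compactness from the uniform moment bounds.

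I expect the only delicate point to be the lower semicontinuity passage from the pointwise functional $\theta_\rho$ to its time integral: one must check that the a.e.\ $w^*$-convergence built into the $\Sigma_I$-topology is exactly the hypothesis needed to apply Step 1 pointwise in $s$ before invoking Fatou. Everything else is a bookkeeping adaptation of Proposition \ref{prop:topprop}, with the real variable $\omega$ entering only through the trivial limit $\omega^n\to\omega$, so that the cumulative constraint becomes, if anything, more transparent than the instantaneous one (no recession-function or indicator-function analysis is required here).
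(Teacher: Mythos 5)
Your proposal is correct and follows essentially the same route as the paper: the paper also reduces everything to the lower semicontinuity of the cumulative functional $\mathscr G(\boldsymbol\mu,\boldsymbol\nu,\omega)=\omega+\int_I\int_{\mathbb R^d}\Psi\,d\mu_s\,ds$ obtained from Step 1 of Proposition \ref{prop:topprop} via Fatou's lemma, and then composes with the indicator $I_{[0,\alpha]}$ before adapting the remaining compactness arguments. Your direct liminf inequality for $\omega_\rho(\max I)$ is just an unpacked version of the paper's statement that the functional $\mathcal E'$ is l.s.c., so the two arguments coincide.
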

\begin{proof}
It is enough to notice that, given $\rho:=(\boldsymbol\mu,\boldsymbol\nu,\omega)\in\Sigma_I$, we have $\omega_\rho(\max I)\le \alpha$
if and only if 
$\mathcal E'(\rho)<+\infty$,
where  $\mathcal E':AC(I,\mathscr P(\mathbb R^d))\times \mathrm{Bor}(I,\mathscr M(\mathbb R^d;\mathbb R^d))\times [0,+\infty[\to [0,+\infty]$ is defined by
{\footnotesize
\begin{align*}
\mathcal E'(\boldsymbol\mu,\boldsymbol\nu,\omega)&:=\begin{cases}
\displaystyle I_{[0,\alpha]}\left(\omega+\int_I\int_{\mathbb R^d}\Psi\left(x,\frac{\nu_s}{\mu_s}(x)\right)\,d\mu_s(x)\,ds\right),
&\textrm{if }|\nu_s|\ll \mu_s \textrm{ for a.e. }s\in I, \\&\textrm{and }\theta_\rho\in L^1(I)\\ \\
+\infty,&\textrm{otherwise}.\end{cases}
\end{align*}}
Moreover, by applying Fatou's lemma and recalling the l.s.c. of the functional $\mathscr F$ in the proof of Proposition \ref{prop:topprop}, we have that
the functional $\mathscr G:AC(I,\mathscr P(\mathbb R^d))\times \mathrm{Bor}(I,\mathscr M(\mathbb R^d;\mathbb R^d))\times [0,+\infty[\to[0,+\infty]$ defined by
{\footnotesize
\begin{align*}
\mathscr G(\boldsymbol\mu,\boldsymbol\nu,\omega)&:=\begin{cases}
\displaystyle\omega+\int_I\int_{\mathbb R^d}\Psi\left(x,\frac{\nu_s}{\mu_s}(x)\right)\,d\mu_s(x)\,ds,
&\textrm{if }|\nu_s|\ll \mu_s \textrm{ for a.e. }s\in I, \\&\textrm{and }\theta_\rho\in L^1(I)\\ \\
+\infty,&\textrm{otherwise}.\end{cases}
\end{align*}}
is l.s.c. w.r.t. $w^*/\mathbb R$-convergence. Thus the functional $\mathcal E'$ is l.s.c. The other assertions follows by an easy adaption of the proof of Proposition \ref{prop:topprop}.
\end{proof}

With the following definition, we notice that Remark \ref{rem:extFeas} can be easily applied also in this setting.
\begin{definition}[Concatenation and restriction]
\begin{enumerate}
\item[]
\item Let $I_i=[a_i,b_i]\subset\mathbb R$, $i=1,2$, with $b_1=a_2$, and $I:=I_1\cup I_2$. Let $\rho^i=(\boldsymbol\mu^i,\boldsymbol\nu^i,\omega^i)\in\mathcal F^1_{I_i}$ with $\mu^1_{b_1}=\mu^2_{a_2}$, $\omega_{\rho^1}(b_1)=\omega^2$. The \emph{concatenation} $\rho^1\star\rho^2=(\boldsymbol\mu,\boldsymbol\nu,\omega^1)$ of $\rho^1$ and $\rho^2$ is defined by setting $\mu_t=\mu_t^i$ and $\nu_t=\nu_t^i$ when $t\in I_i$ for $i=1,2$. We recall that this implies that $\rho^1\star\rho^2\in\mathcal F^1_I$, with a similar reasoning as for the $L^\infty$-time feasibility condition setting.
\item Let $\rho=(\boldsymbol\mu,\boldsymbol\nu,\omega)\in\mathcal F^1_I$. The \emph{restriction} $\rho_{|I_1}$ of $\rho$ to a compact and nonempty interval $I_1\subseteq I$, where $\rho_{|I_1}=(\boldsymbol\mu^1=\{\mu^1_t\}_{t\in I_1}, \boldsymbol\nu^1=\{\nu^1_t\}_{t\in I_1},\omega^1)$, is defined by setting $\mu^1_t:=\mu_t$ and $\nu^1_t:=\nu_t$ for all $t\in I_1$, $\omega^1=\omega_\rho(\min I_1)$. Clearly we have $\rho_{|I_1}\in\mathcal F^1_{I_1}$. 
\end{enumerate}
\end{definition}

\medskip

As done in Definition \ref{def:LinftyGF} for the problem of Section \ref{sec:linftymu}, we now want to reconduct this framework to the general one of Definition \ref{def:adm} in order to gain the general results proved in Section \ref{sec:GF}.

\begin{definition}\label{def:L1GF}
Let $c$ and $c_f$ be as in Definition \ref{def:GFobjects}, satisfying the following additional properties
\begin{enumerate}
\item[$\boldsymbol{(C'_3)}$] $c(\mu^{(1)},\omega^{(1)},\mu^{(2)}, \omega^{(2)}, \hat\rho)<+\infty$ if and only if $\hat\rho=(\boldsymbol{\hat\mu},\boldsymbol{\hat\nu},\hat\omega)\in\mathcal F^1_I(\mu^{(1)},\omega^{(1)})$, with $\hat\mu_{|t=\max I}=\mu^{(2)}$ and $\omega_{\hat\rho}(\max I)=\omega^{(2)}$ for some compact and nonempty interval $I\subset\mathbb R$;
\item[$\boldsymbol{(C'_4)}$] let $0\le a\le b\le c$, $\rho=(\boldsymbol{\mu},\boldsymbol{\nu},\omega)\in\mathcal F^1_{[a,c]}$. Then $c:X\times X\times \Sigma\to[0,+\infty]$ is superadditive by restrictions, i.e.
\[c(\mu_a,\omega,\mu_c,\omega_\rho(c),\rho)\ge c(\mu_a,\omega,\mu_b,\omega_\rho(b),\rho_{|[a,b]})+c(\mu_b,\omega_\rho(b),\mu_c,\omega_\rho(c),\rho_{|[b,c]}).\]
\end{enumerate}
Let $\bar\mu\in\mathscr P_p(\mathbb R^d)$, $\bar\omega\in[0,\alpha]$, $I\subset\mathbb R$ nonempty and compact interval, and $\rho=(\boldsymbol\mu,\boldsymbol\nu,\omega)\in\mathcal F^1_I(\bar\mu,\bar\omega)$. We define the set $\mathscr G^1_{\rho}$ made of the pairs $(\gamma,\sigma)$ defined as follows
\begin{enumerate}
\item $\gamma:I\to X$, $\gamma(t):=(\mu_t,\omega_\rho(t))$ for all $t\in I$;
\item $\sigma:I\to\Sigma$, $\sigma(t):=\rho_{|[\min I,t]}$ for all $t\in I$.
\end{enumerate}
Finally, we define the set
\[\mathscr G_I^1(\bar\mu,\bar\omega):=\left\{(\gamma,\sigma)\in\mathscr G_{\rho}^1\,:\,\rho\in\mathcal F^1_I(\bar\mu,\bar\omega)\right\}.\]
\end{definition}

\begin{theorem}[DPP for $L^1$-time feasibility case]\label{thm:DPP1}
Let $V:\mathscr P_p(\mathbb R^d)\times[0,+\infty[\to [0,+\infty]$ be as in Definition \ref{def:GFobjects}. For any $(\mu_0,\omega_0)\in\mathscr P_p(\mathbb R^d)\times[0,+\infty[$, we have
\[V(\mu_0,\omega_0)=\inf_{\substack{\rho=(\boldsymbol\mu,\boldsymbol\nu,\omega)\in\mathcal F^1_I(\mu_0,\omega_0)\\ I\subseteq \mathbb R \textrm{ compact interval}}}\left\{c(\mu_0,\omega_0,\mu_{|t=\max I},\omega_\rho(\max I),\rho)+V(\mu_{|t=\max I},\omega_\rho(\max I))\right\}.\]
\end{theorem}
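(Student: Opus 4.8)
The plan is to obtain this statement as a direct specialization of the abstract Dynamic Programming Principle established in Theorem \ref{thm:DPP}, exactly as was done for the $L^\infty$-time feasibility case in Theorem \ref{thm:DPPinfty}. The only structural difference is that here the state space is the product $X=\mathscr P_p(\mathbb R^d)\times[0,+\infty[$, so that a state is a pair $(\mu,\omega)$ in which the cumulative sparsity budget $\omega$ is carried along as an auxiliary coordinate, and the cost $c$, $c_f$ are required to satisfy $\boldsymbol{(C_1)}$, $\boldsymbol{(C_2)}$ together with the additional properties $\boldsymbol{(C'_3)}$, $\boldsymbol{(C'_4)}$ of Definition \ref{def:L1GF}.

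First I would invoke Theorem \ref{thm:DPP}, which applies because, by hypothesis, $c$ and $c_f$ are as in Definition \ref{def:GFobjects}, so that $\boldsymbol{(C_1)}$ and $\boldsymbol{(C_2)}$ hold and $V$ is the value function defined there. This gives, for every $(\mu_0,\omega_0)\in X$,
\[V(\mu_0,\omega_0)=\inf_{\substack{y\in X\\ \sigma\in\Sigma}}\left\{c((\mu_0,\omega_0),y,\sigma)+V(y)\right\}.\]
Next I would rewrite the right-hand side in the concrete notation of the $L^1$-case. Unpacking the product, a generic $y\in X$ is a pair $(\mu_1,\omega_1)\in\mathscr P_p(\mathbb R^d)\times[0,+\infty[$, while a generic $\sigma\in\Sigma$ is of the form $\rho=(\boldsymbol\mu,\boldsymbol\nu,\omega)$ defined on some compact interval $I$. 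The crucial step is property $\boldsymbol{(C'_3)}$: the cost $c(\mu_0,\omega_0,\mu_1,\omega_1,\rho)$ is finite \emph{precisely} when $\rho\in\mathcal F^1_I(\mu_0,\omega_0)$ for some compact interval $I$, with the terminal conditions $\mu_{|t=\max I}=\mu_1$ and $\omega_\rho(\max I)=\omega_1$. Since terms with infinite cost never contribute to an infimum, I may restrict the infimum over $X\times\Sigma$ to feasible trajectories $\rho\in\mathcal F^1_I(\mu_0,\omega_0)$; for each such $\rho$ the endpoint $y$ is then forced to equal $(\mu_{|t=\max I},\omega_\rho(\max I))$. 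Substituting this into the displayed identity yields exactly the claimed formula.

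The argument involves no genuine analytic obstacle, the substance of the result residing entirely in the abstract principle of Theorem \ref{thm:DPP} and in the verification, already carried out through Definition \ref{def:L1GF} and Proposition \ref{prop:topprop2}, that the $L^1$-time feasibility problem fits the generalized framework of Section \ref{sec:GF}. The only point requiring care is the bookkeeping for the auxiliary coordinate $\omega$: one must check that, in $\boldsymbol{(C'_3)}$, the pairing of the endpoint $(\mu_1,\omega_1)$ with the terminal values $(\mu_{|t=\max I},\omega_\rho(\max I))$ is consistent, so that the reduction from the abstract infimum over $X\times\Sigma$ to the concrete infimum over compact intervals $I$ and feasible trajectories $\rho$ is exact. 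This matching of the budget variable is precisely what distinguishes the present case from the $L^\infty$ one, where no auxiliary coordinate appears and $\boldsymbol{(C_3)}$ alone suffices.
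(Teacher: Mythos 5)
Your proposal is correct and follows exactly the paper's own argument: the paper proves this theorem in one line by invoking Theorem \ref{thm:DPP} together with $\boldsymbol{(C'_3)}$, which is precisely the specialization and endpoint-matching you carry out in detail. Your additional bookkeeping for the auxiliary coordinate $\omega$ is a faithful elaboration of that same reduction, not a different route.
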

\begin{proof}
Coming from Theorem \ref{thm:DPP} and $\boldsymbol{(C'_3)}$.
\end{proof}

We notice that Remark \ref{rem:LinftyGF} holds also in this setting, thus we gain Corollaries \ref{cor:DPPadm}, \ref{cor:DPPadm2}.

\begin{proposition}[Existence of minimizers]\label{prop:existence1}
Assume properties $\boldsymbol{(C_1)},\boldsymbol{(C_2)},\boldsymbol{(C'_3)},\boldsymbol{(C'_4)}$. Let $p\ge 1$ and $\bar\mu\in\mathscr P_p(\mathbb R^d)$, $\bar\omega\in[0,\alpha]$. If $c(\bar\mu,\bar\omega,\cdot,\cdot,\cdot)$ and $c_f$ are l.s.c. w.r.t. $w^*/\mathbb R$-convergence and $V(\bar\mu,\bar\omega)<+\infty$, then there exist $I\subset \mathbb R$ nonempty and compact interval and an optimal trajectory $(\gamma,\sigma)\in\mathscr G^1_I(\bar\mu,\bar\omega)$, according to Definition \ref{def:GFopt}.
\end{proposition}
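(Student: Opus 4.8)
The plan is to follow, almost verbatim, the strategy used for the $L^\infty$ case in Proposition~\ref{prop:existenceinfty}, the only structural novelty being that the state now carries the extra real coordinate recording the cumulated effort: everything lives in the augmented space $X=\mathscr P_p(\mathbb R^d)\times[0,+\infty[$ and every limit is taken in the $w^*/\mathbb R$ topology, for which compactness is guaranteed by Proposition~\ref{prop:topprop2}. First I would use the finiteness of $V(\bar\mu,\bar\omega)$ together with $\boldsymbol{(C'_3)}$ to produce a minimizing sequence: for each $n$ there are a compact interval, normalized to $[0,T_n]$, and a feasible $\rho^n=(\boldsymbol\mu^n,\boldsymbol\nu^n,\bar\omega)\in\mathcal F^1_{[0,T_n]}(\bar\mu,\bar\omega)$ with terminal data $(\mu^n_{T_n},\omega_{\rho^n}(T_n))$ such that
\[
V(\bar\mu,\bar\omega)+\tfrac1n\ge c\bigl(\bar\mu,\bar\omega,\mu^n_{T_n},\omega_{\rho^n}(T_n),\rho^n\bigr)+c_f\bigl(\mu^n_{T_n},\omega_{\rho^n}(T_n)\bigr).
\]

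Setting $T=\liminf_n T_n$ I would pass to a subsequence $T_{n_k}\to T$ with $T_{n_k}\ge T-\frac1k$, restrict each $\rho^{n_k}$ to $[0,T-\frac1k]$ and extend it to $[0,T]$ through the drift $f(\cdot,0)$ as in Remark~\ref{rem:extFeas}; since $\Psi(x,f(x,0))=0$, this extension creates no cumulated effort, so $\omega_{\hat\rho^{nk}}(T)=\omega_{\rho^{n_k}}(T-\frac1k)$ and feasibility is preserved, giving $\hat\rho^{nk}\in\mathcal F^1_{[0,T]}(\bar\mu,\bar\omega)$. I would then invoke the compactness of $\mathcal F^1_{[0,T]}(\bar\mu,\bar\omega)$ from Proposition~\ref{prop:topprop2} to extract, up to a further subsequence and a diagonal argument, a limit $\hat\rho^{nk}\rightharpoonup^{*/\mathbb R}\hat\rho\in\mathcal F^1_{[0,T]}(\bar\mu,\bar\omega)$, and the auxiliary argument borrowed from Theorem~3 in \cite{CMNP} (the moment bounds of Remark~\ref{rem:basic estim} together with absolute continuity of $\boldsymbol{\hat\mu}^{nk}$) to identify the terminal slices at $t=T$.

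Superadditivity $\boldsymbol{(C'_4)}$ — equivalently item~\eqref{item3:admGF} of Definition~\ref{def:adm}, available here through the analogue of Remark~\ref{rem:LinftyGF} — then yields
\[
c\bigl(\bar\mu,\bar\omega,\mu^{n_k}_{T_{n_k}},\omega_{\rho^{n_k}}(T_{n_k}),\rho^{n_k}\bigr)\ge c\bigl(\bar\mu,\bar\omega,\hat\mu^{nk}_{T-\frac1k},\omega_{\hat\rho^{nk}}(T-\tfrac1k),\hat\rho^{nk}_{|[0,T-\frac1k]}\bigr),
\]
and, combining this with the minimizing inequality and the assumed lower semicontinuity of $c(\bar\mu,\bar\omega,\cdot,\cdot,\cdot)$ and of $c_f$ in $w^*/\mathbb R$, passing to the $\liminf$ would give $V(\bar\mu,\bar\omega)=c(\bar\mu,\bar\omega,\gamma(T),\hat\rho)+c_f(\gamma(T))$ with $\gamma(T)=(\hat\mu_T,\omega_{\hat\rho}(T))$. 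Since the trivial self-transition has zero cost (again via the analogue of Remark~\ref{rem:LinftyGF}), Corollary~\ref{cor:DPPadm2}\eqref{item3:corDPP2} would then promote this to optimality of the associated $(\gamma,\sigma)\in\mathscr G^1_{\hat\rho}\subseteq\mathscr G^1_{[0,T]}(\bar\mu,\bar\omega)$, concluding the proof.

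The hard part will be the bookkeeping of the cumulated-effort coordinate under $w^*/\mathbb R$-convergence. Unlike the $L^\infty$ case, where the terminal datum is the single measure $\mu_T$, here $c_f$ is evaluated at the pair $(\mu_T,\omega_\rho(T))$, and the map $\rho\mapsto\omega_\rho(\max I)$ is only \emph{lower} semicontinuous (this is exactly the content of the l.s.c. of $\mathscr G$ established in Proposition~\ref{prop:topprop2}), not continuous: the convergence $\hat\rho^{nk}\rightharpoonup^{*/\mathbb R}\hat\rho$ forces convergence of the \emph{initial} coordinate (trivially fixed at $\bar\omega$) but only $\omega_{\hat\rho}(T)\le\liminf_k\omega_{\hat\rho^{nk}}(T)$ at the terminal time. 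One must therefore check that this is the semicontinuity direction compatible with the l.s.c. of $c_f$ used in the limit — extracting, if needed, a further subsequence along which the bounded numbers $\omega_{\rho^{n_k}}(T_{n_k})\in[0,\alpha]$ converge, and exploiting that the $f(\cdot,0)$-extension freezes the cumulated cost so that no effort is created on the vanishing intervals $[T-\frac1k,T]$. This last fact is precisely what makes the terminal slices converge in $w^*/\mathbb R$ and keeps the constraint $\omega_{\hat\rho}(T)\le\alpha$ stable, and I expect it to be the only place where the $L^1$ argument genuinely departs from the $L^\infty$ one.
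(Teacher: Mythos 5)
Your proposal is correct and follows essentially the same route as the paper, whose entire proof of this proposition is the single line ``Analogous to the proof of Proposition~\ref{prop:existenceinfty}''; you have simply carried out that analogy in detail. The one point you flag — that the terminal cumulated-effort $\omega_\rho(\max I)$ is only lower semicontinuous under $w^*/\mathbb R$-convergence, so a further subsequence along which the numbers $\omega_{\rho^{n_k}}(T_{n_k})\in[0,\alpha]$ converge must be extracted before invoking the l.s.c. of $c_f$ — is a genuine subtlety that the paper glosses over, and your treatment of it (using that the $f(\cdot,0)$-extension creates no effort on the vanishing intervals) is the right one.
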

\begin{proof}
Analogous to the proof of Proposition \ref{prop:existenceinfty}.
\end{proof}

\medskip

We pass now to analyze the set of initial velocities in this framework. This result is used later on in Section \ref{sec:HJB1} where an Hamilton-Jacobi-Bellman equation is studied.

\begin{definition}\label{def:ivsetL1}
Let $(\mu,\omega)\in\mathscr P_2(\mathbb R^d)\times[0,\alpha[$, $0\le s\le T$.
\begin{enumerate}
\item Given $\boldsymbol\eta\in\mathcal R_{[s,T]}(\mu,\omega)$, we define the set $\mathscr V_{[s,T]}(\boldsymbol\eta)$ as in Definition~\ref{def:ivsetLinfty}.
\item We set
\begin{equation*}
\mathscr V^1_{[s,T]}(\mu,\omega):=\left\{x\mapsto\int_{\Gamma^x_{[s,T]}}w_{\boldsymbol\eta}(x,\gamma)\,d\eta_x\,:\,\boldsymbol\eta\in\mathcal R^1_{[s,T]}(\mu,\omega),\,
w_{\boldsymbol\eta}\in \mathscr V_{[s,T]}(\boldsymbol\eta)\right\},
\end{equation*}
where we denoted with $\{\eta_x\}_{x\in\mathbb R^d}$ the disintegration of $\boldsymbol\eta$ w.r.t. the map $e_s$.
\item We define the set
\begin{equation*}
\mathscr Z^1(\mu):=
\left\{v\in L^2_\mu(\mathbb R^d;\mathbb R^d)\,:\,v(x)\in F(x) \textrm{ for }\mu\textrm{-a.e. }x\in\mathbb R^d\right\}.
\end{equation*}
\end{enumerate}
\end{definition}

\begin{lemma}[Initial velocity]\label{lem:carattVel1}
Let $(\mu,\omega)\in\mathscr P_2(\mathbb R^d)\times[0,\alpha[$, $0\le s\le T$. Then $\mathscr V^1_{[s,T]}(\mu,\omega)\equiv\mathscr Z^1(\mu)$.
\end{lemma}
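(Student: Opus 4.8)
The plan is to establish the two inclusions $\mathscr Z^1(\mu)\subseteq\mathscr V^1_{[s,T]}(\mu,\omega)$ and $\mathscr V^1_{[s,T]}(\mu,\omega)\subseteq\mathscr Z^1(\mu)$ separately, exactly paralleling the structure of the proof of Lemma~\ref{lem:carattVelinfty}, but exploiting the crucial simplification that in the $L^1$-case the feasibility constraint on the initial velocity is \emph{vacuous}. Indeed, $\mathscr Z^1(\mu)$ imposes only that $v$ be an $L^2_\mu$-selection of $F(\cdot)$, with no integral bound of the form $\int\Psi(x,v(x))\,d\mu\le\alpha$; this is because the feasibility is cumulative in time, so a single instant of high control effort costs nothing. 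This is the key structural difference that makes the present lemma simpler than its $L^\infty$ counterpart.

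Let me think about the forward inclusion first.

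For $\mathscr Z^1(\mu)\subseteq\mathscr V^1_{[s,T]}(\mu,\omega)$, I would take $v_0\in\mathscr Z^1(\mu)$, i.e.\ a Borel $L^2_\mu$-selection of $F$, and apply Lemma~\ref{Lemma:initialVel1} with $\mu_0=\mu$ and the given $v_0$. That lemma produces an admissible pair $(\boldsymbol\mu,\boldsymbol\nu)\in\mathcal A_{[0,T]}(\mu)$ together with a representation $\boldsymbol\eta$ whose difference quotients $\frac{e_t-e_0}{t}$ realize $v_0$ in the limit (item (1)). The new ingredient is that I must verify $\boldsymbol\eta\in\mathcal R^1_{[s,T]}(\mu,\omega)$, i.e.\ that the constructed trajectory is $L^1$-feasible with the prescribed starting budget $\omega$. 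Here I invoke item (4) of Lemma~\ref{Lemma:initialVel1}: for any $\beta>0$ the construction can be arranged so that $\int_0^T\theta_\rho(t)\,dt\le\beta$. Choosing $\beta\le\alpha-\omega$ (possible since $\omega<\alpha$), I obtain $\omega_\rho(T)=\omega+\int_0^T\theta_\rho\,dt\le\omega+\beta\le\alpha$, so the trajectory is indeed $L^1$-feasible. After the harmless reduction $s=0$ (by time-translation, as in the companion lemma), item (1) of Lemma~\ref{Lemma:initialVel1} identifies $v_0$ as the weak $L^2_{\boldsymbol\eta}$-limit of the difference quotients, hence $v_0\in\mathscr V^1_{[s,T]}(\mu,\omega)$ via the disintegration formula $x\mapsto\int_{\Gamma^x}w_{\boldsymbol\eta}\,d\eta_x$.

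For the reverse inclusion $\mathscr V^1_{[s,T]}(\mu,\omega)\subseteq\mathscr Z^1(\mu)$, I would argue as follows. Reduce to $s=0$ and fix $\boldsymbol\eta\in\mathcal R^1_{[0,T]}(\mu,\omega)$ together with $w_{\boldsymbol\eta}\in\mathscr V_{[0,T]}(\boldsymbol\eta)$. Since $\mathcal R^1_{[0,T]}(\mu,\omega)\subseteq\mathcal R^{\mathcal A}_{[0,T]}(\mu)$, any pair $(\boldsymbol\mu,\boldsymbol\nu)$ represented by $\boldsymbol\eta$ lies in $\mathcal A_{[0,T]}$; then by Lemma~5 in~\cite{CMNP} together with the convexity of $F(x)$ (Lemma~\ref{lemma:bprop}(1)), the map $x\mapsto\int_{\Gamma^x_{[0,T]}}w_{\boldsymbol\eta}(x,\gamma)\,d\eta_x$ is an $L^2_\mu$-selection of $F$. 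That is precisely the defining condition of $\mathscr Z^1(\mu)$, so the inclusion holds. I expect this direction to be \textbf{the easy part}: there is no integral feasibility bound to verify on the limit velocity, so the entire delicate estimate of Lemma~\ref{lem:carattVelinfty} --- the one passing through Jensen's inequality, the recession of difference quotients over balls $\overline{B_n(0)}$, the uniform continuity of $\Psi$, and the Gron\-wall bound on $|\gamma(t)-\gamma(0)|$ --- simply drops out and is not needed. The only real content is the membership-in-$F$ selection argument, which is already available off-the-shelf.

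Consequently, the \textbf{main obstacle} is not a hard estimate but rather a bookkeeping point in the forward inclusion: one must confirm that the trajectory furnished by Lemma~\ref{Lemma:initialVel1} can be simultaneously (a) started from the prescribed budget $\omega$, and (b) kept $L^1$-feasible up to time $T$. The hypothesis $\omega\in[0,\alpha[$ (strict inequality) is exactly what guarantees a positive slack $\alpha-\omega>0$ into which the budget $\beta$ from item~(4) can be absorbed; I would flag this as the reason the statement requires $\omega<\alpha$ rather than $\omega\le\alpha$. A secondary technical point is ensuring the set $\mathscr V_{[s,T]}(\boldsymbol\eta)$ is nonempty and that the weak limit realizing $v_0$ is compatible with the disintegration formula defining $\mathscr V^1_{[s,T]}$; both follow as in Definition~\ref{def:ivsetLinfty} and the companion Lemma~\ref{lem:carattVelinfty}, so I would treat them briefly by reference rather than reproving them.
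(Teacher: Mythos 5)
Your proof is correct and follows essentially the same route as the paper: the inclusion $\mathscr Z^1(\mu)\subseteq\mathscr V^1_{[s,T]}(\mu,\omega)$ via items (1) and (4) of Lemma \ref{Lemma:initialVel1} with $\beta=\alpha-\omega>0$, and the reverse inclusion via Lemma 5 in \cite{CMNP}. Your additional observations — that the strict inequality $\omega<\alpha$ is what provides the slack for item (4), and that the delicate $\Psi$-estimate of the $L^\infty$ case is not needed because $\mathscr Z^1(\mu)$ carries no integral bound — are accurate and merely make explicit what the paper leaves implicit.
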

\begin{proof}
The proof follows immediately by items $(1)$ and $(4)$ in Lemma \ref{Lemma:initialVel1}, for the inclusion $\mathscr Z^1(\mu)\subseteq\mathscr V^1_{[s,T]}(\mu,\omega)$. 
It is sufficient to take $\beta=\alpha-\omega>0$. The other inclusion comes straightforwardly by Lemma 5 in \cite{CMNP}.
\end{proof}

\begin{remark}
Notice that if $\omega=\alpha$, then $f(\cdot,0)$ is the only admissible velocity driving a feasible trajectory for positive times starting from $(\mu,\omega)$.
\end{remark}

\section{Hamilton-Jacobi-Bellman equation}\label{sec:HJB}
In this section, we determine an Hamilton-Jacobi-Bellman equation for the $L^\infty$-time averaged and the $L^1$-time averaged feasibilty cases of Section \ref{sec:sparse}. 
We consider general cost functions satisfying the properties $\boldsymbol{(C_1)}-\boldsymbol{(C_4)}$ already introduced, and the following property $\boldsymbol{(C_5)}$ regarding a limiting behavior. 
Concerning the definition of sub/super-differentials that we choose to adopt in the space of probability measures, we refer to the recent survey \cite{MQ}.
Our aim is to provide \emph{necessary conditions} in the sense of the formulation of an Hamilton-Jacobi-Bellman equation in the space of measures solved by the value function in a suitable viscosity sense.

\medskip

Referring to Section \ref{sec:GF} and Definition \ref{def:adm}, we ask the following further condition on the cost function $c$.
\begin{itemize}
\item[$\boldsymbol{(C_5)}$] There exists a map $h:X\to\mathbb R$ such that for any $x\in X$, $t>0$ and any $(\gamma,\sigma)$ generalized admissible trajectory from $x$ defined on $[0,t]$, we have
\[\lim_{t\to0^+}\frac{c(x,\gamma(t),\sigma(t))}{t}=h(x).\]
\end{itemize}

\subsection{$L^\infty$-feasibility case}\label{sec:HJBinfty}\

Throughout this section, we consider the framework described in Section \ref{sec:linftymu}, hence
\[X=\mathscr P_2(\mathbb R^d),\quad \Sigma=\displaystyle\bigcup_{\substack{I\subseteq \mathbb R\\ I\textrm{ compact interval}}}\left[AC(I;\mathscr P_2(\mathbb R^d))\times \mathrm{Bor}(I;\mathscr M(\mathbb R^d;\mathbb R^d))\right].\]

We give now a definition of viscosity sub/super-differentials used in this paper, inspired by Definition 3.2 in \cite{MQ}. 
\begin{definition}[Viscosity sub/super-differentials]
Let $w:\mathscr P_2(\mathbb R^d)\to\mathbb R$ be a map, $\bar\mu\in\mathscr P_2(\mathbb R^d)$, $\delta>0$. We say that $p\in L^2_{\bar\mu}(\mathbb R^d)$ belongs to the viscosity $\delta$-superdifferential of $w$ at $\bar\mu$, and we write $p\in D^+_\delta w(\bar\mu)$, if for all $\mu\in\mathscr P_2(\mathbb R^d)$ we have
\begin{equation*}
w(\mu)-w(\bar\mu)\le\int_{\mathbb R^d\times\mathbb R^d\times\mathbb R^d}\langle x_2, x_3-x_1\rangle\,d\tilde\mu(x_1,x_2,x_3)+
\delta W_2(\bar\mu,\mu)+o(W_2(\bar\mu,\mu)),
\end{equation*}
for all $\tilde\mu\in\mathscr P(\mathbb R^d\times\mathbb R^d\times\mathbb R^d)$ such that $\pi_{1,2}\sharp\tilde\mu=(\mathrm{Id}_{\mathbb R^d},p)\sharp\bar\mu$ and $\pi_{1,3}\sharp\tilde\mu\in\Pi(\bar\mu,\mu)$.

In a similar way, the set of the viscosity $\delta$-subdifferentials of $w$ at $\bar\mu$ is defined by $D^-_\delta w(\bar\mu)=-D^+_\delta(-w)(\bar\mu)$.
\end{definition}

We consider the same definition of viscosity sub/super-solutions given in \cite{MQ} as follows.

\begin{definition}[Viscosity solutions]\label{def:viscosol}
Consider the equation
\begin{equation}\label{eq:HJB}
\mathscr H(\mu,Dw(\mu))=0,
\end{equation}
for a given hamiltonian $\mathscr H:T^*(\mathscr P_2(\mathbb R^d))\to\mathbb R$, i.e. $\mathscr H(\mu,p)$ is defined for any $\mu\in\mathscr P_2(\mathbb R^d)$ and $p\in L^2_\mu(\mathbb R^d)$. We say that a function $w:\mathscr P_2(\mathbb R^d)\to\mathbb R$ is
\begin{itemize}
\item a \emph{viscosity subsolution} of \eqref{eq:HJB} if $w$ is u.s.c. and there exists a constant $C>0$ such that
\[\mathscr H(\mu,p)\ge-C\delta,\]
for all $\mu\in\mathscr P_2(\mathbb R^d)$, $p\in D^+_\delta w(\mu)$ and $\delta>0$.
\item a \emph{viscosity supersolution} of \eqref{eq:HJB} if $w$ is l.s.c. and there exists a constant $C>0$ such that
\[\mathscr H(\mu,p)\le C\delta,\]
for all $\mu\in\mathscr P_2(\mathbb R^d)$, $p\in D^-_\delta w(\mu)$ and $\delta>0$.
\item a \emph{viscosity solution} of \eqref{eq:HJB} if $w$ is both a viscosity subsolution and supersolution.
\end{itemize}
\end{definition}

We now prove that a value function with associated cost function satisfying $\boldsymbol{(C_1)}-\boldsymbol{(C_5)}$ is a viscosity solution of a suitable Hamilton-Jacobi-Bellman equation with Hamiltonian defined as follows.
\begin{definition}[Hamiltonian]\label{def:Hinfty}
We define the \emph{hamiltonian} $\mathscr H^\infty:T^*(\mathscr P_2(\mathbb R^d))\to\mathbb R$ by
\[\mathscr H^\infty(\mu,p):=h(\mu)+\inf_{v\in\mathscr Z^\infty(\mu)}\int_{\mathbb R^d}\langle p,v\rangle\,d\mu,\]
for any $(\mu,p)\in T^*(\mathscr P_2(\mathbb R^d))$, where the map $h$ is given in $\boldsymbol{(C_5)}$.
\end{definition}

\begin{theorem}[HJB]\label{thm:HJBinfty}
Assume properties $\boldsymbol{(C_1)}-\boldsymbol{(C_5)}$. Let $\mathcal B\subseteq\mathscr P_2(\mathbb R^d)$ open with uniformly bounded $2$-moments. Assume $c(\mu,\cdot,\cdot)$ and $c_f$ to be l.s.c. w.r.t. $w^*$-convergence, $\mu\in\mathcal B$, and the value function $V:\mathscr P_2(\mathbb R^d)\to\mathbb R$ to be continuous on $\mathcal B$. Then $V$ is a viscosity solution of $\mathscr H^\infty(\mu,DV(\mu))=0$ on $\mathcal B$, where $\mathscr H^\infty$ is defined as in \ref{def:Hinfty}.
\end{theorem}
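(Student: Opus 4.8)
Since $V$ is assumed continuous on $\mathcal B$, it is there simultaneously u.s.c. and l.s.c., so the plan is to verify only the two differential inequalities of Definition \ref{def:viscosol}: the subsolution bound $\mathscr H^\infty(\mu,p)\ge -C\delta$ for $p\in D^+_\delta V(\mu)$, and the supersolution bound $\mathscr H^\infty(\mu,p)\le C\delta$ for $p\in D^-_\delta V(\mu)$, with a single constant $C$ uniform over $\mu\in\mathcal B$. The common device is to compare $V$ along short feasible trajectories $t\mapsto\mu_t$ issued from $\mu$ and represented by some $\boldsymbol\eta$, and to select in the sub/super-differential the three-plan $\tilde\mu_t:=(e_0,p\circ e_0,e_t)\sharp\boldsymbol\eta$, which satisfies $\pi_{12}\sharp\tilde\mu_t=(\mathrm{Id},p)\sharp\mu$ and $\pi_{13}\sharp\tilde\mu_t\in\Pi(\mu,\mu_t)$; for it the bilinear term equals $\int\langle p\circ e_0,e_t-e_0\rangle\,d\boldsymbol\eta$. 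I will repeatedly use the a priori bound $W_2(\mu,\mu_t)\le Mt$, where $M$ depends only on the (uniformly bounded) $2$-moments on $\mathcal B$; this follows from Proposition \ref{prop:moments} together with $W_2(\mu,\mu_t)^2\le\int|e_t-e_0|^2\,d\boldsymbol\eta$, and it is exactly what makes $C:=M$ uniform and forces $o(W_2(\mu,\mu_t))/t\to0$.

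For the supersolution bound, fix $p\in D^-_\delta V(\mu)$. Since $V(\mu)<+\infty$, Proposition \ref{prop:existenceinfty} furnishes an optimal trajectory $(\gamma,\sigma)$ from $\mu$, with $\gamma(t)=\mu_t$ and a representation $\boldsymbol\eta$; optimality (Definition \ref{def:GFopt}) gives $c(\mu,\mu_t,\sigma(t))=V(\mu)-V(\mu_t)$ for all $t$. Writing the subdifferential inequality at $\mu_t$ with the plan $\tilde\mu_t$ yields
\[-c(\mu,\mu_t,\sigma(t))=V(\mu_t)-V(\mu)\ge\int\langle p\circ e_0,e_t-e_0\rangle\,d\boldsymbol\eta-\delta W_2(\mu,\mu_t)-o(W_2(\mu,\mu_t)).\]
Along a sequence $t_i\to0^+$ realizing some $w_{\boldsymbol\eta}\in\mathscr V_{[0,T]}(\boldsymbol\eta)$ one has $\tfrac{e_{t_i}-e_0}{t_i}\rightharpoonup w_{\boldsymbol\eta}$ weakly in $L^2_{\boldsymbol\eta}$; since $p\circ e_0\in L^2_{\boldsymbol\eta}$ and, by Lemma \ref{lem:carattVelinfty}, $v(x):=\int_{\Gamma^x_{[0,T]}}w_{\boldsymbol\eta}(x,\gamma)\,d\eta_x\in\mathscr Z^\infty(\mu)$, dividing by $t_i$ and passing to the limit (using $\boldsymbol{(C_5)}$ on the left and disintegration w.r.t.\ $e_0$ on the right) gives $-h(\mu)\ge\int\langle p,v\rangle\,d\mu-\delta M$. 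Hence $\mathscr H^\infty(\mu,p)\le h(\mu)+\int\langle p,v\rangle\,d\mu\le\delta M$.

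For the subsolution bound, fix $p\in D^+_\delta V(\mu)$ and an arbitrary $v\in\mathscr Z^\infty(\mu)$. By Lemma \ref{Lemma:initialVel1} (with $v_0=v$) there is a trajectory with representation $\boldsymbol\eta$ whose initial velocity is $v$ in the sense of property (1) and which, by property (3), satisfies $\theta_\rho(t)\le\int_{\mathbb R^d}\Psi(x,v(x))\,d\mu(x)\le\alpha$ for all $t$, so it is $\alpha$-feasible; by Remark \ref{rem:LinftyGF} the associated $(\gamma,\sigma)$ is generalized admissible. The Dynamic Programming Principle (Theorem \ref{thm:DPPinfty}) gives $V(\mu)\le c(\mu,\mu_t,\rho_{|[0,t]})+V(\mu_t)$, while the superdifferential inequality at $\mu_t$ with $\tilde\mu_t$ gives $V(\mu_t)-V(\mu)\le\int\langle p\circ e_0,e_t-e_0\rangle\,d\boldsymbol\eta+\delta W_2(\mu,\mu_t)+o(W_2(\mu,\mu_t))$. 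Combining, dividing by $t$, and letting $t\to0^+$ using $\boldsymbol{(C_5)}$ and property (1) yields $-h(\mu)\le\int\langle p,v\rangle\,d\mu+\delta M$, i.e. $h(\mu)+\int\langle p,v\rangle\,d\mu\ge-\delta M$; taking the infimum over $v\in\mathscr Z^\infty(\mu)$ gives $\mathscr H^\infty(\mu,p)\ge-\delta M$.

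Choosing $C:=M$ then concludes both inequalities. The main obstacle is the uniform control of the constant and of the error terms in the vanishing-time limit: one must certify that $W_2(\mu,\mu_t)\le Mt$ holds with $M$ uniform on $\mathcal B$, and, in the supersolution step, that the weak $L^2_{\boldsymbol\eta}$-limit defining the admissible velocity survives the pairing against $p\circ e_0$ (which it does, by weak convergence and disintegration with respect to $e_0$). The structural asymmetry between the two halves—an \emph{optimal} trajectory for the supersolution versus a \emph{freely prescribed} feasible velocity for the subsolution—is precisely what the infimum in $\mathscr H^\infty$ demands.
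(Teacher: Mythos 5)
Your proposal is correct and follows essentially the same route as the paper's proof: the subsolution half realizes an arbitrary $v\in\mathscr Z^\infty(\mu)$ via Lemma \ref{Lemma:initialVel1}/Lemma \ref{lem:carattVelinfty} and combines the DPP inequality with the superdifferential estimate for the plan $(e_0,p\circ e_0,e_t)\sharp\boldsymbol\eta$, while the supersolution half uses the optimal trajectory from Proposition \ref{prop:existenceinfty}, extracts a weakly convergent sequence $\tfrac{e_{t_i}-e_0}{t_i}\rightharpoonup w_{\boldsymbol\eta}$, and identifies the limit with an element of $\mathscr Z^\infty(\mu)$ through Lemma \ref{lem:carattVelinfty}. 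The uniform constant $C=M$ coming from the moment bounds of Proposition \ref{prop:moments} is exactly the constant $K$ the paper uses.
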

\begin{proof}
\emph{Claim 1.} $V$ is a subsolution of  $\mathscr H^\infty(\mu,DV(\mu))=0$ on $\mathcal B$.
\smallskip

Let $\bar\mu\in\mathcal B$, $\delta>0$, $p\in D^+_\delta V(\bar\mu)$. Let $v\in\mathscr Z^\infty(\bar\mu)$, then by Lemma \ref{lem:carattVelinfty} there exists $(\boldsymbol\mu,\boldsymbol\nu)\in\mathcal F^\infty_{[0,T]}(\bar\mu)$, $\boldsymbol\eta$ representing $(\boldsymbol\mu,\boldsymbol\nu)$ such that
\[\lim_{t\to 0^+}\int_{\mathbb R^d\times\Gamma_T}\langle p\circ e_0,\frac{e_t-e_0}{t}\rangle\,d\boldsymbol\eta=\int_{\mathbb R^d}\langle p,v\rangle\,d\bar\mu.\]

By Theorem \ref{thm:DPPinfty},
\[V(\mu_t)-V(\bar\mu)+c(\bar\mu,\mu_t,\sigma(t))\ge 0,\]
for all $t\in[0,T]$, where $\sigma(t):=(\boldsymbol\mu,\boldsymbol\nu)_{|[0,t]}$. Notice that if we define $\tilde\mu=(e_0,p\circ e_0,e_t)\sharp\boldsymbol\eta$, we have $\pi_{1,2}\sharp\tilde\mu=(\mathrm{Id}_{\mathbb R^d},p)\sharp\bar\mu$ and $\pi_{1,3}\sharp\tilde\mu=(e_0,e_t)\sharp\boldsymbol\eta\in\Pi(\bar\mu,\mu_t)$.
Hence, $W_2(\bar\mu,\mu_t)\le\|e_t-e_0\|_{L^2_{\boldsymbol\eta}}$, which vanishes as $t\to 0^+$ by continuity of $t\mapsto e_t$ (see Proposition 2.3 in \cite{MQ}).

Thus, we can apply the definition of viscosity superdifferential with $\tilde\mu$ as before and have
\begin{align*}
0&\le V(\mu_t)-V(\bar\mu)+c(\bar\mu,\mu_t,\sigma(t))\\
&\le \int_{\mathbb R^d\times\mathbb R^d\times\mathbb R^d}\langle x_2, x_3-x_1\rangle\,d\tilde\mu(x_1,x_2,x_3)+
\delta\, W_2(\bar\mu,\mu_t)+o(W_2(\bar\mu,\mu_t))+c(\bar\mu,\mu_t,\sigma(t))\\
&\le\int_{\mathbb R^d\times\Gamma_T}\langle p\circ e_0,e_t-e_0\rangle\,d\boldsymbol\eta+\delta\,\|e_t-e_0\|_{L^2_{\boldsymbol\eta}}+o(\|e_t-e_0\|_{L^2_{\boldsymbol\eta}})+c(\bar\mu,\mu_t,\sigma(t)).
\end{align*}
Dividing by $t$, we have $\|\frac{e_t-e_0}{t}\|_{L^2_{\boldsymbol\eta}}\le K$, where $K$ is a suitable constant coming from Proposition \ref{prop:moments} and from hypothesis. Hence,
\begin{equation*}
-\delta K\le\int_{\mathbb R^d\times\Gamma_T}\langle p\circ e_0,\frac{e_t-e_0}{t}\rangle\,d\boldsymbol\eta+\frac{c(\bar\mu,\mu_t,\sigma(t))}{t},
\end{equation*}
and by letting $t\to 0^+$ we get
\begin{equation*}
-\delta K\le\int_{\mathbb R^d}\langle p(x),v(x)\rangle\,d\bar\mu(x)+h(\bar\mu).
\end{equation*}
We conclude by passing to the infimum on $v\in\mathscr Z^\infty(\bar\mu)$.

\medskip

\emph{Claim 2.} $V$ is a supersolution of  $\mathscr H^\infty(\mu,DV(\mu))=0$ on $\mathcal B$.

\smallskip

Take $\bar\mu\in\mathcal B$, $\delta>0$, $p\in D^-_\delta V(\bar\mu)$. By Proposition \ref{prop:existenceinfty}, there exist $T>0$, an optimal trajectory $(\gamma,\sigma)\in\mathscr G^\infty_\rho$ with $\rho=(\boldsymbol\mu,\boldsymbol\nu)\in\mathcal F^\infty_{[0,T]}(\bar\mu)$, and a representation $\boldsymbol\eta$ such that
\[V(\mu_t)-V(\bar\mu)+c(\bar\mu,\mu_t,\sigma(t))=0, \quad\textrm{for all }t\in[0,T].\]

We can take as before $\tilde\mu=(e_0,p\circ e_0,e_t)\sharp\boldsymbol\eta$, thus we have  $W_2(\bar\mu,\mu_t)\le\|e_t-e_0\|_{L^2_{\boldsymbol\eta}}$ and we obtain
\begin{align*}
0&=V(\mu_t)-V(\bar\mu)+c(\bar\mu,\mu_t,\sigma(t))\\
&\ge\int_{\mathbb R^d\times\Gamma_T}\langle p\circ e_0, e_t-e_0\rangle\,d\boldsymbol\eta-\delta\,\|e_t-e_0\|_{L^2_{\boldsymbol\eta}}-o(\|e_t-e_0\|_{L^2_{\boldsymbol\eta}})+c(\bar\mu,\mu_t,\sigma(t)).
\end{align*}
Dividing by $t$ and reasoning as in Claim 1, we get
\begin{equation*}
\delta K\ge\int_{\mathbb R^d\times\Gamma_T}\langle p\circ e_0,\frac{e_t-e_0}{t}\rangle\,d\boldsymbol\eta+\frac{c(\bar\mu,\mu_t,\sigma(t))}{t}.
\end{equation*}
Now, there exists a sequence $\{t_i\}_{i\in\mathbb N}\subseteq]0,T[$ and $w_{\boldsymbol\eta}\in\mathscr V_{[0,T]}(\boldsymbol\eta)$ s.t. $t_i\to 0^+$, $\frac{e_{t_i}-e_0}{t_i}$ weakly converge to $w_{\boldsymbol\eta}$ in $L^2_{\boldsymbol\eta}$, thus by letting $t_i\to 0^+$, thanks to Lemma \ref{lem:carattVelinfty}, there exists $v\in\mathscr Z^\infty(\bar\mu)$ s.t.
\[\delta K\ge\int_{\mathbb R^d}\langle p,v\rangle\,d\bar\mu + h(\bar\mu)\ge\mathscr H^\infty(\bar\mu,p).\]
\end{proof}

\subsection{$L^1$-feasibility case}\label{sec:HJB1}\

In this section, we consider the framework described in Section \ref{sec:l1mu}, hence
\[X=\mathscr P_2(\mathbb R^d)\times[0,+\infty[,\quad \Sigma=\displaystyle\bigcup_{\substack{I\subseteq \mathbb R\\ I\textrm{ compact interval}}}\left[AC(I;\mathscr P_2(\mathbb R^d))\times \mathrm{Bor}(I;\mathscr M(\mathbb R^d;\mathbb R^d))\right]\times[0,+\infty[.\]

Similarly to the previous case, we give the following.
\begin{definition}[Viscosity sub/super-differentials]
Let $w:\mathscr P_2(\mathbb R^d)\times[0,+\infty[\to\mathbb R$ be a map, $(\bar\mu,\bar\omega)\in\mathscr P_2(\mathbb R^d)\times[0,+\infty[$, $\delta>0$. We say that $(p_{\bar\mu},p_{\bar\omega})\in L^2_{\bar\mu}(\mathbb R^d)\times[0,+\infty[$ belongs to the viscosity $\delta$-superdifferential of $w$ at $(\bar\mu,\bar\omega)$, and we write $(p_{\bar\mu},p_{\bar\omega})\in D^+_\delta w(\bar\mu,\bar\omega)$, if for all $(\mu,\omega)\in\mathscr P_2(\mathbb R^d)\times[0,+\infty[$ we have
\begin{align*}
w(\mu,\omega)-w(\bar\mu,\bar\omega)&\le\int_{\mathbb R^d\times\mathbb R^d\times\mathbb R^d}\langle x_2, x_3-x_1\rangle\,d\tilde\mu(x_1,x_2,x_3)+p_{\bar\omega}\,|\omega-\bar\omega|+\\
&\qquad+\delta \sqrt{W_2^2(\bar\mu,\mu)+|\omega-\bar\omega|^2}+o(W_2(\bar\mu,\mu)+|\omega-\bar\omega|),
\end{align*}
for all $\tilde\mu\in\mathscr P(\mathbb R^d\times\mathbb R^d\times\mathbb R^d)$ such that $\pi_{1,2}\sharp\tilde\mu=(\mathrm{Id}_{\mathbb R^d},p_{\bar\mu})\sharp\bar\mu$ and $\pi_{1,3}\sharp\tilde\mu\in\Pi(\bar\mu,\mu)$.

In a similar way, the set of the viscosity $\delta$-subdifferentials of $w$ at $(\bar\mu,\bar\omega)$ is defined by $D^-_\delta w(\bar\mu,\bar\omega)=-D^+_\delta(-w)(\bar\mu,\bar\omega)$.
\end{definition}

We adopt the same definition of viscosity sub/super-solutions given in Definition \ref{def:viscosol}, with the natural adaptations for this context.

\begin{definition}[Hamiltonian]\label{def:H1}
We define the \emph{hamiltonian} $\mathscr H^1:T^*(\mathscr P_2(\mathbb R^d)\times[0,+\infty[)\to\mathbb R$ by
\[\mathscr H^1(\mu,\omega,p_\mu,p_\omega):=h(\mu,\omega)+\inf_{v\in\mathscr Z^1(\mu)}\left\{\int_{\mathbb R^d}\left[\langle p_\mu(x),v(x)\rangle+p_\omega\,\Psi(x,v(x))\right]\,d\mu(x)\right\},\]
for any $(\mu,\omega,p_\mu,p_\omega)\in T^*(\mathscr P_2(\mathbb R^d)\times[0,+\infty[)$, where the map $h$ is given in $\boldsymbol{(C_5)}$.
\end{definition}

\begin{lemma}\label{Lemma:initialVel1-part2}
Let $(\mu_0,\bar\omega)\in\mathscr P_2(\mathbb R^d)\times[0,\alpha[$, $T>0$. Let $(\boldsymbol\mu,\boldsymbol\nu,\bar\omega)\in\mathcal F^1_{[0,T]}(\mu_0,\bar\omega)$ be a feasible trajectory represented by $\boldsymbol\eta\in\mathscr P(\mathbb R^d\times\Gamma_T)$ in the sense of Definition \ref{def:representation}\eqref{item1:repres}. Then there exist $w:\mathbb R^d\to\mathbb R^d$ $L^2_{\mu_0}$-selection of $F(\cdot)$, and a sequence $t_k\in[0,T]$, $t_k\to0^+$, such that
\begin{enumerate}
\item for all $p\in L^2_{\mu_0}(\mathbb R^d)$ 
\[\lim_{k\to+\infty}\int_{\mathbb R^d\times\Gamma_{[0,T]}} \langle \frac{e_{t_k}(x,\gamma)-e_0(x,\gamma)}{t_k},p\circ e_0(x,\gamma)\rangle\,d\boldsymbol\eta(x,\gamma)=\int_{\mathbb R^d}\langle w(x),p(x)\rangle\,d\mu_0(x);\]
\item $\displaystyle\lim_{k\to+\infty}\frac{1}{t_k}\int_0^{t_k}\int_{\mathbb R^d}\Psi\left(x,\frac{\nu_s}{\mu_s}(x)\right)\,d\mu_s(x)\,ds\ge\int_{\mathbb R^d}\Psi(x,w(x))\,d\mu_0(x)$.
\end{enumerate}
\end{lemma}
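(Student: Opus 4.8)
The plan is to extract $w$ and the sequence $t_k$ from the initial-velocity set $\mathscr V_{[0,T]}(\boldsymbol\eta)$, to deduce (1) immediately by disintegration, and to obtain (2) by adapting the chain of inequalities in the proof of Lemma~\ref{lem:carattVelinfty}, with the crucial modification that one never evaluates $\Psi$ off the graph of $F$.

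First I would invoke the non-emptiness of $\mathscr V_{[0,T]}(\boldsymbol\eta)$ (Definition~\ref{def:ivsetLinfty}(1)) to pick $w_{\boldsymbol\eta}\in\mathscr V_{[0,T]}(\boldsymbol\eta)$ together with a sequence $t_k\to0^+$ along which $\frac{e_{t_k}-e_0}{t_k}\rightharpoonup w_{\boldsymbol\eta}$ weakly in $L^2_{\boldsymbol\eta}$, and set $w(x):=\int_{\Gamma^x_{[0,T]}}w_{\boldsymbol\eta}(x,\gamma)\,d\eta_x$ via the disintegration of $\boldsymbol\eta$ with respect to $e_0$. That $w$ is an $L^2_{\mu_0}$-selection of $F(\cdot)$ follows, exactly as in Lemma~\ref{lem:carattVelinfty}, from Lemma~5 in \cite{CMNP} and convexity of $F(x)$. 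Passing to a further subsequence (still denoted $t_k$) I may also assume that $\frac{1}{t_k}\int_0^{t_k}\theta_\rho(s)\,ds$ converges in $[0,+\infty]$. Statement (1) is then immediate: since $p\circ e_0\in L^2_{\boldsymbol\eta}$ for every $p\in L^2_{\mu_0}$, weak convergence gives $\int\langle\frac{e_{t_k}-e_0}{t_k},p\circ e_0\rangle\,d\boldsymbol\eta\to\int\langle w_{\boldsymbol\eta},p\circ e_0\rangle\,d\boldsymbol\eta$, and disintegrating with respect to $e_0$ (so that $p(x)$ factors out of the fibre integral) turns the right-hand side into $\int_{\mathbb R^d}\langle w(x),p(x)\rangle\,d\mu_0(x)$.

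For (2) the goal is $\int_{\mathbb R^d}\Psi(x,w(x))\,d\mu_0\le\lim_k\frac{1}{t_k}\int_0^{t_k}\theta_\rho(s)\,ds$. Since $\boldsymbol\eta$ represents the pair $(\boldsymbol\mu,\boldsymbol\nu)$, one has $\dot\gamma(s)=\frac{\nu_s}{\mu_s}(\gamma(s))$ for $\boldsymbol\eta$-a.e.\ $(x,\gamma)$, whence $\theta_\rho(s)=\int_{\mathbb R^d\times\Gamma_{[0,T]}}\Psi(\gamma(s),\dot\gamma(s))\,d\boldsymbol\eta$. The key device is to replace the genuine average velocity by an \emph{admissible} one built from controls: writing $\dot\gamma(s)=f(\gamma(s),u(s))$ with $u(s)\in U$ the norm-minimal control density of Lemma~\ref{lemma:MinContr} (so $|u(s)|=\Psi(\gamma(s),\dot\gamma(s))$), I set $\bar u_k(x,\gamma):=\frac{1}{t_k}\int_0^{t_k}u(s)\,ds\in U$ and $\tilde w_k(x,\gamma):=f(\gamma(0),\bar u_k(x,\gamma))\in F(\gamma(0))$. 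Then $\Psi(\gamma(0),\tilde w_k)\le|\bar u_k|\le\frac{1}{t_k}\int_0^{t_k}\Psi(\gamma(s),\dot\gamma(s))\,ds$, so that, with $\tilde w_k^{\,\mathrm{bar}}(x):=\int_{\Gamma^x_{[0,T]}}\tilde w_k\,d\eta_x\in F(x)$, Jensen's inequality on the disintegration and then Fubini give, for every $n$, the bound $\int_{\overline{B_n(0)}}\Psi(x,\tilde w_k^{\,\mathrm{bar}}(x))\,d\mu_0\le\int_{\overline{B_n(0)}\times\Gamma_{[0,T]}}\Psi(\gamma(0),\tilde w_k)\,d\boldsymbol\eta\le\frac{1}{t_k}\int_0^{t_k}\theta_\rho(s)\,ds$, where $\Psi\ge0$ is used to enlarge the domain back to $\mathbb R^d$ in the last step.

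It remains to pass to the limit on the left. Because $f_0,A$ are uniformly continuous on compacta and the Gronwall estimate $|\gamma(s)-\gamma(0)|\le Cs(1+n)e^{Cs}$ keeps the trajectories issuing from $\overline{B_n(0)}$ in a fixed compact set while tending to $0$ as $s\to0^+$, the difference $\tilde w_k(x,\gamma)-\frac{e_{t_k}-e_0}{t_k}(x,\gamma)=\frac{1}{t_k}\int_0^{t_k}[f(\gamma(0),u(s))-f(\gamma(s),u(s))]\,ds$ tends to $0$ uniformly over $\{\gamma(0)\in\overline{B_n(0)}\}$; hence $\tilde w_k^{\,\mathrm{bar}}\rightharpoonup w$ weakly in $L^2_{\mu_0}(\overline{B_n(0)})$. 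By lower semicontinuity of the convex integral functional $v\mapsto\int_{\overline{B_n(0)}}\Psi(x,v(x))\,d\mu_0$ (the argument of Step~1 in the proof of Proposition~\ref{prop:topprop}), $\int_{\overline{B_n(0)}}\Psi(x,w(x))\,d\mu_0\le\liminf_k\int_{\overline{B_n(0)}}\Psi(x,\tilde w_k^{\,\mathrm{bar}}(x))\,d\mu_0\le\lim_k\frac{1}{t_k}\int_0^{t_k}\theta_\rho(s)\,ds$, and taking the supremum over $n$ yields (2). The main obstacle is precisely this comparison of $\Psi$ at the two base points $\gamma(s)$ and $\gamma(0)$: evaluating $\Psi(\gamma(0),\dot\gamma(s))$ directly is illegitimate, since $\dot\gamma(s)\in F(\gamma(s))$ need not lie in $F(\gamma(0))$, and the control-averaging construction $\tilde w_k$ is exactly what keeps every evaluation of $\Psi$ on $\mathrm{Graph}\,F$ while simultaneously preserving the upper bound by $\theta_\rho$ and the weak convergence to $w$.
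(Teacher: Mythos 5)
Your proposal is correct, and item (1) is handled exactly as in the paper (extract $w_{\boldsymbol\eta}\in\mathscr V_{[0,T]}(\boldsymbol\eta)$, disintegrate w.r.t.\ $e_0$, identify $w$ as an $L^2_{\mu_0}$-selection of $F$ via Lemma 5 of \cite{CMNP} and convexity of $F(x)$). For item (2), however, you take a genuinely different route. The paper works on the Eulerian side: it first upgrades the weak convergence of the difference quotients to the statement $\frac{1}{t_k}\int_0^{t_k}\int\langle v_s,\varphi\rangle\,d\mu_s\,ds\to\int\langle w,\varphi\rangle\,d\mu_0$ for $\varphi\in C^0_c$ (via the estimate \eqref{eq:convinitvel}), then writes $\Psi(x,v)=\sup_h[a_h(x)+\langle v,b_h(x)\rangle]$ as a countable supremum of continuous affine minorants (Lemma 2.2.3 in \cite{But}), passes to the limit in each affine minorant using that convergence and the narrow continuity of $t\mapsto\mu_t$, and concludes by monotone convergence. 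You instead work on the Lagrangian side: you average the norm-minimal control $u(s,\gamma(s))$ over $[0,t_k]$ to build an admissible velocity $\tilde w_k=f(\gamma(0),\bar u_k)\in F(\gamma(0))$ whose $\Psi$-cost is controlled by the time-averaged effort, show via uniform continuity of $f_0,A$ on compacta and the Gronwall bound that $\tilde w_k$ differs from the difference quotient by a uniformly vanishing term (so its barycenter still converges weakly to $w$), and conclude by weak lower semicontinuity of the convex integral functional $v\mapsto\int\Psi(x,v(x))\,d\mu_0$. Both arguments are sound and of comparable length; the paper's duality argument sidesteps the base-point issue $\Psi(\gamma(0),\cdot)$ versus $\Psi(\gamma(s),\cdot)$ entirely by never evaluating $\Psi$ off the limit object, while your control-averaging construction confronts it head-on and resolves it by keeping every evaluation on $\mathrm{Graph}\,F$ — which also makes more transparent where the convexity of $U$ and of $\Psi(x,\cdot)$ is used. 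One small point worth making explicit in your write-up: the extraction of a further subsequence along which $\frac{1}{t_k}\int_0^{t_k}\theta_\rho(s)\,ds$ converges in $[0,+\infty]$ is needed for the left-hand side of (2) to be a genuine limit, and is legitimate since the lemma lets you choose $t_k$; the paper glosses over this as well.
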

\begin{proof}
Let $t_k\to 0^+$ be any sequence along which $\frac{e_{t_k}-e_0}{t_k}$ weakly converges in $L^2_{\boldsymbol\eta}$. Then, item $(1)$ follows by Lemma \ref{lem:carattVel1}.

Let us prove the second item. For any $t\in[0,T]$ and any $\varphi\in C^0_C(\mathbb R^d)$ we have
\begin{align*}
\langle \dfrac{e_t-e_0}{t},\varphi\circ e_0\rangle_{L^2_{\boldsymbol\eta}}=&\iint_{\mathbb R^d\times\Gamma_T}\langle \dfrac{\gamma(t)-\gamma(0)}{t},\varphi(x)\rangle\,d\boldsymbol\eta\\
=&\dfrac{1}{t}\int_0^t \iint_{\mathbb R^d\times\Gamma_T}\langle \dot\gamma(s),\varphi(\gamma(0))\rangle\,d\boldsymbol\eta\,ds\\
=&\dfrac{1}{t}\int_0^t \iint_{\mathbb R^d\times\Gamma_T}\langle \dot\gamma(s),\varphi(\gamma(s))\rangle\,d\boldsymbol\eta\,ds+\\
&+\dfrac{1}{t}\int_0^t \iint_{\mathbb R^d\times\Gamma_T}\langle \dot\gamma(s),\varphi(\gamma(0))-\varphi(\gamma(s))\rangle\,d\boldsymbol\eta\,ds  \\
\le&\dfrac{1}{t}\int_0^t \int_{\mathbb R^d}\iint_{\mathbb R^d\times\Gamma_T}\langle \dot\gamma(s),\varphi(\gamma(s))\rangle\,d\boldsymbol\eta(x,\gamma)\,ds+\\
&+\dfrac{1}{t}\int_0^t \iint_{\mathbb R^d\times\Gamma_T} C(1+|\gamma(s)|)|\varphi(\gamma(0))-\varphi(\gamma(s))|\,d\boldsymbol\eta\,ds\\
=&\dfrac{1}{t}\int_0^t \iint_{\mathbb R^d\times\Gamma_T}\langle \frac{\nu_s}{\mu_s}(\gamma(s)),\varphi(\gamma(s))\rangle\,d\boldsymbol\eta\,ds+\dfrac{1}{t}\int_0^t H(s)\,ds\\
=&\dfrac{1}{t}\int_0^t \int_{\mathbb R^d}\langle \frac{\nu_s}{\mu_s}(y),\varphi(y)\rangle\,d\mu_s(y)\,ds+\dfrac{1}{t}\int_0^t H(s)\,ds
\end{align*}
where $s\mapsto H(s)$ is the continuous function defined by 
\[H(s)=\iint_{\mathbb R^d\times\Gamma_T} C(1+|\gamma(s)|)|\varphi(\gamma(0))-\varphi(\gamma(s))|\,d\boldsymbol\eta\,ds.\]
With the very same argument, denoted with $v_s(y)=\frac{\nu_s}{\mu_s}(y)$, we can prove that
\begin{equation}\label{eq:convinitvel}
\left|\dfrac{1}{t}\int_0^t \int_{\mathbb R^d}\langle v_s(y),\varphi(y)\rangle\,\,d\mu_s(y)\,ds- \langle \dfrac{e_t-e_0}{t},\varphi\circ e_0\rangle_{L^2_{\boldsymbol\eta}}\right|\le \dfrac{1}{t}\int_0^t H(s)\,ds,
\end{equation}
and the right hand side tends to $0$ as $t\to 0$.
In particular, as a consequence of item $(1)$, we get
\begin{equation}\label{eq:critical}
\lim_{k\to+\infty}\dfrac{1}{t_k}\int_0^{t_k} \int_{\mathbb R^d}\langle v_s(y),\varphi(y)\rangle\,\,d\mu_s(y)\,ds=\int_{\mathbb R^d}\langle w(x),\varphi(x)\rangle\,d\mu_0(x).
\end{equation}

Now, as already observed in the proof of Proposition \ref{prop:topprop} (Step 1), we can apply Lemma 2.2.3(i) in \cite{But} 
to say that there exist $\{a_h\}_{h\in\mathbb N},\{b_h\}_{h\in\mathbb N}\subseteq C^0(\mathbb R^d;\mathbb R)$ such that $\Psi(x,v)=\sup_{h}[a_h(x)+\langle v,b_h(x)\rangle]$, 
for all $x,v\in\mathbb R^d$. Without loss of generality, we can assume $a_h, b_h$ to have compact support. Thus,
\begin{align*}
&\lim_{k\to+\infty} \dfrac{1}{t_k}\int_0^{t_k}\int_{\mathbb R^d}\Psi\left(x,\frac{\nu_s}{\mu_s}(x)\right)\,d\mu_s(x)\,ds\\
&=\lim_{k\to+\infty} \int_0^1\int_{\mathbb R^d}\Psi\left(x,\frac{\nu_{t_kw}}{\mu_{t_kw}}(x)\right)\,d\mu_{t_kw}(x)\,dw\\
&\ge\lim_{k\to+\infty}\int_0^1\int_{\mathbb R^d}\left(a_h(x)+\langle \frac{\nu_{t_kw}}{\mu_{t_kw}}(x),b_h(x)\rangle\right)\,d\mu_{t_kw}(x)\,dw\\
&=\int_{\mathbb R^d}\left(a_h(x)+\langle w,b_h(x)\rangle\right)\,d\mu_0(x),
\end{align*}
where the last passage follows by absolute continuity of $\boldsymbol\mu$ and by \eqref{eq:critical}.

By positivity of $\Psi$, we can consider $\{a_h\}_{h\in\mathbb N},\{b_h\}_{h\in\mathbb N}$ to be positive. Let $g_h(x):=a_h(x)+\langle w(x),b_h(x)\rangle$, and $\hat g_k:=\max\{g_h(x)\,:\,h\le k\}$. Now, since $\hat g_k$ is a non-decreasing sequence of measurable and non-negative functions with $\sup_h g_h(x)=\sup_k \hat g_k(x)$, then passing to the supremum and applying Beppo-Levi Theorem we have
\[\lim_{k\to+\infty} \dfrac{1}{t_k}\int_0^{t_k}\int_{\mathbb R^d}\Psi\left(x,\frac{\nu_s}{\mu_s}(x)\right)\,d\mu_s(x)\,ds\ge\int_{\mathbb R^d}\Psi(x,w)\,d\mu_0(x).\]
\end{proof}

\begin{theorem}[HJB]\label{thm:HJBL1}
Assume properties $\boldsymbol{(C_1)},\boldsymbol{(C_2)},\boldsymbol{(C'_3)},\boldsymbol{(C'_4)},\boldsymbol{(C_5)}$. Let $\mathcal B\subseteq\mathscr P_2(\mathbb R^d)$ open with uniformly bounded $2$-moments. Assume $c(\mu,\omega,\cdot,\cdot,\cdot)$ and $c_f$ to be l.s.c. w.r.t. $w^*/\mathbb R$-convergence, $\mu\in\mathcal B$ and $\omega\in[0,\alpha[$. Assume the value function $V:\mathscr P_2(\mathbb R^d)\times[0,+\infty[\to\mathbb R$ to be continuous on $\mathcal B\times[0,\alpha[$. Then $V$ is a viscosity solution of $\mathscr H^1(\mu,\omega,DV(\mu,\omega))=0$ on $\mathcal B\times[0,\alpha[$, where $\mathscr H^1$ is defined as in \ref{def:H1}.
\end{theorem}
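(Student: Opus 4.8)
The plan is to follow the two-claim structure of the proof of Theorem~\ref{thm:HJBinfty}, adapting every step to the enlarged state space $X=\mathscr P_2(\mathbb R^d)\times[0,+\infty[$ and to the Hamiltonian $\mathscr H^1$ of Definition~\ref{def:H1}, whose new feature is the term $p_\omega\,\Psi(x,v(x))$ coming from the cumulative-cost variable. The Dynamic Programming Principle to invoke is now Theorem~\ref{thm:DPP1}, optimal trajectories come from Proposition~\ref{prop:existence1}, and throughout one uses that along any feasible $\rho$ the $\omega$-component evolves as $\omega_\rho(t)=\bar\omega+\int_0^t\theta_\rho(s)\,ds$, so that $\omega_\rho(t)\ge\bar\omega$ and $|\omega_\rho(t)-\bar\omega|=\int_0^t\theta_\rho(s)\,ds$.

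\emph{Subsolution.} First I would fix $(\bar\mu,\bar\omega)\in\mathcal B\times[0,\alpha[$, $\delta>0$ and $(p_\mu,p_\omega)\in D^+_\delta V(\bar\mu,\bar\omega)$, choose $v\in\mathscr Z^1(\bar\mu)$, and apply Lemma~\ref{Lemma:initialVel1} with $\beta=\alpha-\bar\omega$ to obtain a feasible trajectory $(\boldsymbol\mu,\boldsymbol\nu,\bar\omega)\in\mathcal F^1_{[0,T]}(\bar\mu,\bar\omega)$ with representation $\boldsymbol\eta$, initial velocity $v$, and the bound $\theta_\rho(t)\le\int_{\mathbb R^d}\Psi(x,v(x))\,d\bar\mu$ from item~(3). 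Taking $\tilde\mu=(e_0,p_\mu\circ e_0,e_t)\sharp\boldsymbol\eta$ exactly as in Theorem~\ref{thm:HJBinfty}, the $\delta$-superdifferential inequality combined with Theorem~\ref{thm:DPP1} gives, after dividing by $t$,
\[
-\delta K\le\int_{\mathbb R^d\times\Gamma_T}\langle p_\mu\circ e_0,\frac{e_t-e_0}{t}\rangle\,d\boldsymbol\eta+p_\omega\,\frac1t\int_0^t\theta_\rho(s)\,ds+\frac{c(\bar\mu,\bar\omega,\mu_t,\omega_\rho(t),\sigma(t))}{t},
\]
where $K$ bounds $\sqrt{W_2^2(\bar\mu,\mu_t)+|\omega_\rho(t)-\bar\omega|^2}/t$. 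Using $p_\omega\ge0$ and the pointwise bound on $\theta_\rho$ to replace the middle term by $p_\omega\int\Psi(x,v)\,d\bar\mu$, then letting $t\to0^+$ (the first term converging by Lemma~\ref{Lemma:initialVel1}(1) and the last by $\boldsymbol{(C_5)}$), one obtains $-\delta K\le\int\langle p_\mu,v\rangle\,d\bar\mu+p_\omega\int\Psi(x,v)\,d\bar\mu+h(\bar\mu,\bar\omega)$; the infimum over $v\in\mathscr Z^1(\bar\mu)$ yields $\mathscr H^1(\bar\mu,\bar\omega,p_\mu,p_\omega)\ge-\delta K$.

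\emph{Supersolution.} Next I would fix $(p_\mu,p_\omega)\in D^-_\delta V(\bar\mu,\bar\omega)$ and use Proposition~\ref{prop:existence1} to get an optimal $(\gamma,\sigma)\in\mathscr G^1_\rho$ with $\rho=(\boldsymbol\mu,\boldsymbol\nu,\bar\omega)\in\mathcal F^1_{[0,T]}(\bar\mu,\bar\omega)$ and representation $\boldsymbol\eta$, so that $V(\mu_t,\omega_\rho(t))-V(\bar\mu,\bar\omega)+c(\cdots)=0$ for all $t$. Applying the $\delta$-subdifferential inequality with the same $\tilde\mu$ and dividing by $t$, the decisive input is Lemma~\ref{Lemma:initialVel1-part2}: along the sequence $t_k\to0^+$ that it provides, the quotient $\frac{e_{t_k}-e_0}{t_k}$ converges weakly in $L^2_{\boldsymbol\eta}$ to an $L^2_{\bar\mu}$-selection $w$ of $F$, whence the scalar-product term tends to $\int\langle p_\mu,w\rangle\,d\bar\mu$ (item~(1)), while $\liminf_k\frac1{t_k}\int_0^{t_k}\theta_\rho(s)\,ds\ge\int\Psi(x,w)\,d\bar\mu$ (item~(2)). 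Since $p_\omega\ge0$, $W_2(\bar\mu,\mu_{t_k})\le\|e_{t_k}-e_0\|_{L^2_{\boldsymbol\eta}}=O(t_k)$ and $\omega_\rho(t_k)-\bar\omega=O(t_k)$, passing to the liminf and invoking $\boldsymbol{(C_5)}$ gives
\[
h(\bar\mu,\bar\omega)+\int_{\mathbb R^d}\big[\langle p_\mu,w\rangle+p_\omega\,\Psi(x,w)\big]\,d\bar\mu\le\delta K,
\]
and since $w\in\mathscr Z^1(\bar\mu)$, the left-hand side is an upper bound for $\mathscr H^1(\bar\mu,\bar\omega,p_\mu,p_\omega)$, which is therefore $\le\delta K$.

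The main obstacle is the handling of the cumulative-cost term $\frac1t\int_0^t\theta_\rho(s)\,ds$ in the two passages to the limit. For the subsolution it must be estimated \emph{from above}, which is immediate from the pointwise bound of Lemma~\ref{Lemma:initialVel1}(3); for the supersolution it must instead be estimated \emph{from below} by $\int\Psi(x,w)\,d\bar\mu$ along the weakly convergent subsequence, and this is precisely Lemma~\ref{Lemma:initialVel1-part2}(2), whose proof relies on the convex-duality representation $\Psi(x,v)=\sup_h[a_h(x)+\langle v,b_h(x)\rangle]$ together with Beppo--Levi. One must also verify that $\sqrt{W_2^2(\bar\mu,\mu_{t_k})+|\omega_\rho(t_k)-\bar\omega|^2}/t_k$ stays bounded, so that the error terms collapse to $\delta K$ and the $o(\cdot)$ remainder vanishes as $t_k\to0^+$.
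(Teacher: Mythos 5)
Your proposal is correct and follows essentially the same route as the paper: the subsolution claim via Theorem~\ref{thm:DPP1}, Lemma~\ref{lem:carattVel1}/Lemma~\ref{Lemma:initialVel1} (with $\beta=\alpha-\bar\omega$) and the pointwise upper bound of item~(3), and the supersolution claim via Proposition~\ref{prop:existence1} and Lemma~\ref{Lemma:initialVel1-part2}, whose item~(2) supplies exactly the lower bound on $\frac1t\int_0^t\theta_\rho(s)\,ds$ that you identify as the crux. The only cosmetic difference is that you make explicit the use of $p_\omega\ge 0$ and of $|\omega_\rho(t)-\bar\omega|=\int_0^t\theta_\rho(s)\,ds$, which the paper leaves implicit.
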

\begin{proof}
\emph{Claim 1.} $V$ is a subsolution of  $\mathscr H^1(\mu,\omega,DV(\mu,\omega))=0$ on $\mathcal B\times[0,\alpha[$.
\smallskip

Let $(\bar\mu,\bar\omega)\in\mathcal B\times[0,\alpha[$, $\delta>0$, $(p_{\bar\mu},p_{\bar\omega})\in D^+_\delta V(\bar\mu,\bar\omega)$, and $v\in\mathscr Z^1(\bar\mu)$. By Lemma \ref{lem:carattVel1} and \ref{Lemma:initialVel1}, there exist $(\boldsymbol\mu,\boldsymbol\nu)\in\mathcal A_{[0,T]}(\bar\mu)$ and $\boldsymbol\eta$ representing $(\boldsymbol\mu,\boldsymbol\nu)$ such that $\rho:=(\boldsymbol\mu,\boldsymbol\nu,\bar\omega)\in\mathcal F^1_{[0,T]}(\bar\mu,\bar\omega)$, and items $(1),(3)$ of Lemma \ref{Lemma:initialVel1} are satisfied with $v=v_0$.

By Theorem \ref{thm:DPP1} we have
\[V(\mu_t,\omega_\rho(t))-V(\bar\mu,\bar\omega)+c(\bar\mu,\bar\omega,\mu_t,\omega_\rho(t),\sigma(t))\ge 0,\]
for all $t\in[0,T]$, where $\sigma(t):=\rho_{|[0,t]}$. Moreover, if we define $\tilde\mu=(e_0,p_{\bar\mu}\circ e_0,e_t)\sharp\boldsymbol\eta$, then $\pi_{1,2}\sharp\tilde\mu=(\mathrm{Id}_{\mathbb R^d},p_{\bar\mu})\sharp\bar\mu$ and $\pi_{1,3}\sharp\tilde\mu=(e_0,e_t)\sharp\boldsymbol\eta\in\Pi(\bar\mu,\mu_t)$.
Proceeding analogously to the proof of Theorem \ref{thm:HJBinfty} and recalling items $(1),(3)$ of Lemma \ref{Lemma:initialVel1}, we have
\begin{align*}
0&\le V(\mu_t,\omega_\rho(t))-V(\bar\mu,\bar\omega)+c(\bar\mu,\bar\omega,\mu_t,\omega_\rho(t),\sigma(t))\\
&\le \int_{\mathbb R^d\times\mathbb R^d\times\mathbb R^d}\langle x_2, x_3-x_1\rangle\,d\tilde\mu(x_1,x_2,x_3)+p_{\bar\omega}\,|\omega_\rho(t)-\bar\omega|+\\
&\qquad+\delta\, \sqrt{W_2^2(\bar\mu,\mu_t)+|\omega_\rho(t)-\bar\omega|^2}+
o(W_2(\bar\mu,\mu_t)+|\omega_\rho(t)-\bar\omega|)+c(\bar\mu,\bar\omega,\mu_t,\omega_\rho(t),\sigma(t))\\
&= \int_{\mathbb R^d\times\Gamma_T}\langle p_{\bar\mu}\circ e_0, e_t-e_0\rangle\,d\boldsymbol\eta+p_{\bar\omega}\,\int_0^t\int_{\mathbb R^d}\Psi\left(x,\frac{\nu_s}{\mu_s}(x)\right)\,d\mu_s(x)\,ds+\\
&\qquad+\delta\, \sqrt{W_2^2(\bar\mu,\mu_t)+|\omega_\rho(t)-\bar\omega|^2}+
o(W_2(\bar\mu,\mu_t)+|\omega_\rho(t)-\bar\omega|)+c(\bar\mu,\bar\omega,\mu_t,\omega_\rho(t),\sigma(t))\\
&\le \int_{\mathbb R^d\times\Gamma_T}\langle p_{\bar\mu}\circ e_0, e_t-e_0\rangle\,d\boldsymbol\eta+t\,p_{\bar\omega}\,\int_{\mathbb R^d}\Psi(x,v(x))\,d\bar\mu(x)+\\
&\qquad+\delta\, \sqrt{W_2^2(\bar\mu,\mu_t)+|\omega_\rho(t)-\bar\omega|^2}+
o(W_2(\bar\mu,\mu_t)+|\omega_\rho(t)-\bar\omega|)+c(\bar\mu,\bar\omega,\mu_t,\omega_\rho(t),\sigma(t)).
\end{align*}
Let us now divide by $t$ and recall that $\|\frac{e_t-e_0}{t}\|_{L^1_{\boldsymbol\eta}}\le K_1$, for some $K_1>0$, and also $\frac{\omega_\rho(t)-\bar\omega}{t}\le\int_{\mathbb R^d}\Psi(x,v(x))\,d\bar\mu(x)\le K_2$, for some $K_2>0$, by boundedness of $U$. Then, for some $K>0$ we have
\begin{align*}
-\delta K&\le\int_{\mathbb R^d\times\Gamma_T}\langle p_{\bar\mu}\circ e_0,\frac{e_t-e_0}{t}\rangle\,d\boldsymbol\eta + p_{\bar\omega}\,\int_{\mathbb R^d}\Psi(x,v(x))\,d\bar\mu(x)+\\
&\qquad+\frac{1}{t}o(\|e_t-e_0\|_{L^2_{\boldsymbol\eta}}+|\omega_\rho(t)-\bar\omega|)+\frac{1}{t}c(\bar\mu,\bar\omega,\mu_t,\omega_\rho(t),\sigma(t)).
\end{align*}
By letting $t\to 0^+$,
\[-\delta K\le \int_{\mathbb R^d}\langle p_{\bar\mu}(x),v(x)\rangle\,d\bar\mu+p_{\bar\omega}\,\int_{\mathbb R^d}\Psi(x,v(x))\,d\bar\mu + h(\bar\mu,\bar\omega),\]
and we conclude by passing to the infimum w.r.t. $v\in\mathscr Z^1(\bar\mu)$.

\emph{Claim 2.} $V$ is a supersolution of  $\mathscr H^1(\mu,\omega,DV(\mu,\omega))=0$ on $\mathcal B\times[0,\alpha[$.

\smallskip

Let $(\bar\mu,\bar\omega)\in\mathcal B\times[0,\alpha[$, $\delta>0$, $(p_{\bar\mu},p_{\bar\omega})\in D^-_\delta V(\bar\mu,\bar\omega)$. By Proposition \ref{prop:existence1}, there exist an optimal trajectory $\rho:=(\boldsymbol\mu,\boldsymbol\nu,\bar\omega)\in\mathcal F^1_{[0,T]}(\bar\mu,\bar\omega)$ and a representation $\boldsymbol\eta$ such that
\[V(\mu_t,\omega_\rho(t))-V(\bar\mu,\bar\omega)+c(\bar\mu,\bar\omega,\mu_t,\omega_\rho(t),\sigma(t))=0, \quad\textrm{for all }t\in[0,T].\]

As done for Claim 1, we can take $\tilde\mu=(e_0,p_{\bar\mu}\circ e_0,e_t)\sharp\boldsymbol\eta$, thus
\begin{align*}
0&=V(\mu_t,\omega_\rho(t))-V(\bar\mu,\bar\omega)+c(\bar\mu,\bar\omega,\mu_t,,\omega_\rho(t),\sigma(t))\\
&\ge\int_{\mathbb R^d\times\Gamma_T}\langle p_{\bar\mu}\circ e_0, e_t-e_0\rangle\,d\boldsymbol\eta+p_{\bar\omega}\,\int_0^t\int_{\mathbb R^d}\Psi\left(x,\frac{\nu_s}{\mu_s}(x)\right)\,d\mu_s(x)\,ds+\\
&\qquad-\delta\,\sqrt{W_2^2(\bar\mu,\mu_t)+|\omega_\rho(t)-\bar\omega|^2}-
o(W_2(\bar\mu,\mu_t)+|\omega_\rho(t)-\bar\omega|)+c(\bar\mu,\bar\omega,\mu_t,\omega_\rho(t),\sigma(t))\\
&\ge\int_{\mathbb R^d\times\Gamma_T}\langle p_{\bar\mu}\circ e_0, e_t-e_0\rangle\,d\boldsymbol\eta+p_{\bar\omega}\,\int_0^t\int_{\mathbb R^d}\Psi\left(x,\frac{\nu_s}{\mu_s}(x)\right)\,d\mu_s(x)\,ds+\\
&\qquad-\delta\,\sqrt{\|e_t-e_0\|^2_{L^2_{\boldsymbol\eta}}+\left(\int_0^t\int_{\mathbb R^d} \Psi\left(x,\frac{\nu_s}{\mu_s}(x)\right)\,d\mu_s\,ds\right)^2}-
o(\|e_t-e_0\|_{L^2_{\boldsymbol\eta}}+|\omega_\rho(t)-\bar\omega|)\\
&\qquad+c(\bar\mu,\bar\omega,\mu_t,\omega_\rho(t),\sigma(t)).
\end{align*}
Dividing by $t$ and reasoning as in Claim 1, we get
\begin{align*}
\delta K&\ge\int_{\mathbb R^d\times\Gamma_T}\langle p_{\bar\mu}\circ e_0,\frac{e_t-e_0}{t}\rangle\,d\boldsymbol\eta + p_{\bar\omega}\,\frac{1}{t}\int_0^t\int_{\mathbb R^d}\Psi\left(x,\frac{\nu_s}{\mu_s}(x)\right)\,d\mu_s(x)\,ds+\\
&\qquad-\frac{1}{t}o(\|e_t-e_0\|_{L^2_{\boldsymbol\eta}}+|\omega_\rho(t)-\bar\omega|)+\frac{1}{t}c(\bar\mu,\bar\omega,\mu_t,\omega_\rho(t),\sigma(t)).
\end{align*}
Now, by Lemma \ref{Lemma:initialVel1-part2} there exists a sequence $\{t_k\}_{k\in\mathbb N}\subseteq]0,T[$ and $v_0\in\mathscr Z^1(\bar\mu)$ s.t. by passing to the limit in the previous estimate, along the sequence $t_k$, we get
\[\delta K\ge\int_{\mathbb R^d}\langle p_{\bar\mu},v_0\rangle\,d\bar\mu +p_{\bar\omega}\int_{\mathbb R^d} \Psi(x,v_0(x))\,d\bar\mu+ h(\bar\mu,\bar\omega)\ge\mathscr H^1(\bar\mu,\bar\omega,p_{\bar\mu},p_{\bar\omega}).\]
\end{proof}

\section{A special case: the minimum time function}\label{sec:examples}

In this section we show a remarkable example where the theory proposed in this paper can be applied.

For simplicity of exposition, consider the framework outlined in Section \ref{sec:linftymu} (similarly, it is possible to consider the setting of Section \ref{sec:l1mu}). We thus take $X=\mathscr P_2(\mathbb R^d)$, $\Sigma=\displaystyle\bigcup_{\substack{I\subseteq \mathbb R\\ I\textrm{ compact interval}}}\left[AC(I;\mathscr P_2(\mathbb R^d))\times \mathrm{Bor}(I;\mathscr M(\mathbb R^d;\mathbb R^d))\right]$.
A closed nonempty \emph{target set} $\tilde S\subseteq\mathscr P_2(\mathbb R^d)$ is given.

We then define the cost functions $c:X\times X\times\Sigma\to [0,+\infty]$ and $c_f:X\to[0,+\infty]$ as follows
\begin{align*}
c(\mu^{(1)},\mu^{(2)},(\boldsymbol\mu,\boldsymbol\nu))=&\begin{cases}T,&\textrm{ if }(\boldsymbol\mu,\boldsymbol\nu)\in\mathcal F^\infty_{[0,T]}(\mu^{(1)})\textrm{ and }\mu_{|t=T}=\mu^{(2)},\\ +\infty,&\textrm{ otherwise},\end{cases}\\
c_f(\tilde\mu)=&\begin{cases}0,&\textrm{ if }\tilde\mu\in \tilde S,\\ +\infty, &\textrm{ if }\tilde\mu\notin \tilde S.\end{cases}
\end{align*}
Following the notation introduced in Section \ref{sec:GF}, we have that  $V(\mu)<+\infty$ if and only if there exist a feasible trajectory $(\boldsymbol\mu,\boldsymbol\nu)$ joining $\mu$ with the target set $\tilde S$.
In this case $V(\mu)$ is the infimum amount of time where such trajectories are defined, and so $V(\mu)=0$ if and only if $\mu\in \tilde S$.

\begin{remark}
Such a value function $V$ is the so called \emph{minimum time function} studied in \cites{Cav,CMNP,CM,CMO}. It is important to stress that in those references, no feasibility constraints are imposed, dealing just with admissibility properties for trajectories in $\mathscr P_2(\mathbb R^d)$. This paper provides thus a sort of extension of those results.
\end{remark}

We show now that conditions $\boldsymbol{(C_1)}-\boldsymbol{(C_5)}$ are satisfied.

\begin{itemize}
\item Check for $\boldsymbol{(C_1)}$. 
Given $\mu^{(i)}\in X$, $i=1,2,3$ and $(\boldsymbol\mu^{(j)},\boldsymbol\nu^{(j)})=(\{\mu^{(j)}_t\}_{[0,T_j]},\{\nu^{(j)}_t\}_{[0,T_j]})\in\Sigma$, $j=1,2$, we must prove that it is possible to construct $(\boldsymbol\mu',\boldsymbol\nu')=(\{\mu'_t\}_{[0,T']},\{\nu'_t\}_{[0,T']})\in\Sigma$
such that 
\[c(\mu^{(1)},\mu^{(3)},(\boldsymbol\mu',\boldsymbol\nu'))\le c(\mu^{(1)},\mu^{(2)},(\boldsymbol\mu^{(1)},\boldsymbol\nu^{(1)}))+c(\mu^{(2)},\mu^{(3)},(\boldsymbol\mu^{(2)},\boldsymbol\nu^{(2)})).\]
We notice that if $\mu^{(1)}_T\ne \mu^{(2)}$ or $\mu^{(2)}_0\ne \mu^{(2)}$ or $(\boldsymbol\mu^{(j)},\boldsymbol\nu^{(j)})$ are not feasible, there is nothing to prove, since the right hand side is $+\infty$.
Thus we assume $\mu^{(1)}_T=\mu^{(2)}=\mu^{(2)}_0$ and feasibility of $(\boldsymbol\mu^{(j)},\boldsymbol\nu^{(j)})$, $j=1,2$. Let us define $T'=T_1+T_2$ and 
$(\boldsymbol\mu',\boldsymbol\nu')=(\boldsymbol\mu^{(1)},\boldsymbol\nu^{(1)})\star(\boldsymbol\mu^{(2)},\boldsymbol\nu^{(2)})\in\Sigma$. Thus 
\[c(\mu^{(1)},\mu^{(3)},(\boldsymbol\mu',\boldsymbol\nu'))=T'=T_1+T_2=c(\mu^{(1)},\mu^{(2)},(\boldsymbol\mu^{(1)},\boldsymbol\nu^{(1)}))+c(\mu^{(2)},\mu^{(3)},(\boldsymbol\mu^{(2)},\boldsymbol\nu^{(2)})).\]
\item Check for $\boldsymbol{(C_2)}$.
Given $\mu^{(i)}\in X$, $i=1,2$, and $(\boldsymbol\mu,\boldsymbol\nu)=(\{\mu_t\}_{[0,T]},\{\nu_t\}_{[0,T]})\in\Sigma$, we have to prove that there exist $\mu'\in X$, $(\boldsymbol{\mu'}^{(i)},\boldsymbol{\nu'}^{(i)})=(\{{\mu'}^i_t\}_{[0,T'_i]},\{{\nu'}^i_t\}_{[0,T'_i]})\in\Sigma$, $i=1,2$, such that
\[c(\mu^{(1)},\mu^{(2)},(\boldsymbol\mu,\boldsymbol\nu))\ge c(\mu^{(1)},\mu',(\boldsymbol{\mu'}^{(1)},\boldsymbol{\nu'}^{(1)}))+c(\mu',\mu^{(2)},(\boldsymbol{\mu'}^{(2)},\boldsymbol{\nu'}^{(2)})).\]
Indeed, we have that if $(\boldsymbol\mu,\boldsymbol\nu)\not\in\mathscr F^\infty_{[0,T]}(\mu^{(1)})$ or $\mu_{|t=T}\neq\mu^{(2)}$, the result is trivial since the left hand side is $+\infty$. Thus we assume that $(\boldsymbol\mu,\boldsymbol\nu)\in\mathscr F^\infty_{[0,T]}(\mu^{(1)})$ with $\mu_{|t=T}=\mu^{(2)}$. If we take any $\tau\in[0,T]$ and define $\mu'=\mu_{\tau}$, and $(\boldsymbol\mu'^{(1)},\boldsymbol\nu'^{(1)})=(\boldsymbol\mu,\boldsymbol\nu)_{|[0,\tau]}$ and $(\boldsymbol\mu'^{(2)},\boldsymbol\nu'^{(2)})=(\boldsymbol\mu,\boldsymbol\nu)_{|[\tau,T]}$, this provides
the desired inequality.
\item Condition $\boldsymbol{(C_3)}$ holds immediately by definition of $c(\cdot,\cdot,\cdot)$.
\item Condition $\boldsymbol{(C_4)}$ follows by additivity of the cost function $c(\cdot,\cdot,\cdot)$, defined above, along feasible trajectories, together with the well posedness of the restriction operation for feasible trajectories.
\item Condition $\boldsymbol{(C_5)}$ holds with the constant function $h\equiv 1$.
\end{itemize}

We leave to future research the study of conditions assuring some regularity for the minimum time function in this framework. We refer the reader to \cite{Cav} for some discussions concerning the case with no feasibility constraints. The study of higher order controllability conditions in this setting is still an open research direction. We address the reader to the recent issue \cite{CMlie} for a consistent definition of Lie brackets in a measure-theoretic setting which could possibly be used for the study of second order controllability results.

\begin{remark}
We observe that, along an optimal trajectory, the minimum time function previously outlined gives the mass' time of last entry into the target set. 
Alternatively, keeping the same definition of $c_f$, we can give another definition of the cost $c$ leading to a value function that is an \emph{averaged minimum time function}. This is inspired by \cite{CMPproceed}, where the authors provide also a possible example of application in the case with no interactions. Let $S\subseteq \mathbb R^d$ be non empty and closed, and $\tilde S=\{\mu\in\mathscr P_2(\mathbb R^d)\,:\,\mathrm{supp}\,\mu\subseteq S\}$, we define
\begin{equation*}
c(\mu^{(1)},\mu^{(2)},(\boldsymbol\mu,\boldsymbol\nu))=\begin{cases}\displaystyle\int_0^T\int_{\mathbb R^d}\chi_{\mathbb R^d\setminus S}(x)d\mu_s(x)\,ds,&\textrm{if }(\boldsymbol\mu,\boldsymbol\nu)\in\mathcal F^\infty_{[0,T]}(\mu^{(1)}),\\&\textrm{and }\mu_{|t=T}=\mu^{(2)},\\
+\infty,&\textrm{ otherwise}.\end{cases}
\end{equation*}
We can easily check that properties $\boldsymbol{(C_1)}-\boldsymbol{(C_5)}$ holds true with $h(\bar\mu)=\bar\mu(\mathbb R^d\setminus S)$. Thus, if the initial datum $\bar\mu$ is such that $\mathrm{supp}\,\bar\mu\subseteq \mathbb R^d\setminus S$, then the Hamiltonians for the minimum time function and for the averaged minimum time function coincide. This was also observed in \cite{CMP} dealing with an averaged minimum-time problem with no feasibility constraints.
\end{remark}

\section{Application to an advertising campaign strategy model}\label{sec:appl}
An example of application of the theory developed in the present paper can be provided by a very simplified model of allocation of resources in an advertising campaign of a political party.
Indeed, in this case we assume that each point $P(x_1,...,x_d)\in\mathbb R^d$ represents the \emph{attitude} of each voter towards a list of $d$ statements, gathering the
main themes of the public debate. For instance, we may assume that $x_i=0$ means neutrality towards the $i$-th statement, while a value $x_i>0$ (resp. $x_i<0$) 
denotes a positive (resp. negative) attitude towards it. The strength of such positive or negative attitude is given by $|x_i|$.
\par\medskip\par
The interval time $[0,T]$ represents the time period of the campaign. 
Since the total number of voters can be assumed constant during the campaign, we can represent the evolution of the voters' distribution in the space of opinions  $\mathbb R^d$ by a 
time-dependent probability measure $\{\mu_t\}_{t\in[0,T]}\subseteq\mathscr P(\mathbb R^d)$. In this case, given $d$ real intervals $I_1,\dots,I_d$, 
the number $\mu_t(I_1\times\dots\times I_d)$ will represent the fraction of the voters which have attitude toward the $i$-th statement belonging to the range $I_i$, $i=1,\dots, d$.
The initial configuration $\bar\mu$ of the voter's attitude can be approximately deduced with polls, interviews, analyzing the trend topics in the social networks, and so on. 
\par\medskip\par
Due to the conservation of the mass, the voters' distribution evolves according to the (controlled) continuity equation 
\[\begin{cases}
\partial_t\mu_t+\mathrm{div}(v_t\mu_t)=0,\\
\mu_{0}=\bar \mu.
\end{cases}\]
where $v_t(x)\in F(x)$ for $\mu_t$-a.e. $x\in\mathbb R^d$ and a.e. $t\in [0,T]$.
The set-valued map $F(x)$ represents the possible infinitesimal evolutions starting from the opinion $x$, 
which are affected both by the advertising effort, represented by the control $u(\cdot)$, and by the general public opinion trend, 
represented by the drift term $f_0(x)$. A convenient choice can be to take $f_0(x)=a x$ for some $a\in\mathbb R$. In the case $a>0$ each voter - in absence of advertising - will tend 
to radicalize his/her position towards extreme positions as the time passes, while for $a<0$ the voters will spontaneously mitigate their opinion in time, tending to neutrality
towards the selected political themes (or, in other terms, the selected political themes will go out of fashion). 
In this simplified model we do not assume any interaction among the voters. 
Models including these effects have been studied by \cite{ACFK,Carrillo2018} but only numerical results are available at the present moment.
\par\medskip\par
Aim of the political party is to minimize the cost of the campaign needed to steer the voters near to a desired configuration, representing the party's political positions.
The $W_2$-distance between the final configuration and the set of desired ones can be taken as exit cost, while the running cost will be proportional to the minimum amount of advertising effort $u(\cdot)$
needed to generate the desired evolution.
Thus we are naturally led to a cost of type
\[\mathscr J(\boldsymbol\mu,\boldsymbol\nu)=\int_0^T\int_{\mathbb R^d} \left[\Psi\left(x,\dfrac{\nu_t}{\mu_t}(x)\right)+I_{F(x)}\left(\dfrac{\nu_t}{\mu_t}(x)\right)\right]\,d\mu_t+\mathscr G(\mu_T),\]
where, once represented by $\mathscr S$ the desidered opinion ditributions, we set
\[\mathscr G(\mu)=\inf\{W_2(\mu,\theta):\,\theta\in\mathscr S\},\]
and we want to minize it over the pairs $(\boldsymbol\mu=\{\mu_t\}_{t\in[0,T]},\boldsymbol\nu=\{\nu_t\}_{t\in[0,T]})\subseteq \mathscr P(\mathbb R^d)\times\mathscr M(\mathbb R^d;\mathbb R^d)$
satisfying $\partial_t\mu_t+\mathrm{div}\,\nu_t=0$ and $|\nu_t|\ll \mu_t$ for a.e. $t\in [0,T]$.
We notice that, due to the possible presence of different opinions in the party, in general the final configuration set $\mathscr S$ 
cannot be reduced neither to the concentration of the whole mass into a single point 
(total \emph{unanimity} on a specific position), nor to uniformly distribute it on a certain set of opinions in $\mathbb R^d$ (which represents \emph{uniform distribution} in some range of opinions), 
but in general will exhibit an internal non-uniform distribution among the opinions.
\par\medskip\par
In this framework, we considered two additional constraints to the advertising campaign: indeed, we can consider a $L^\infty([0,T])$-bound on the instantaneous cost
\[t\mapsto\int_{\mathbb R^d}\Psi\left(x,\dfrac{\nu_t}{\mu_t}(x)\right)\,d\mu_t(x),\]
representing a constraint on the instantaneous amount of the resources that could be allocated for the advertising.
\par\medskip\par
Usually a bound of this kind is enforced by some antitrust authority to ensure a fair competition among the parties. For instance, this would like to avoid   
a concentration of the advertising on the media for the parties that can rely on wealthy supporting lobbies, which will translate into an unfair advantage 
on the other parties.
\par\medskip\par
The other kind of constraint is instead on the $L^1([0,T])$-norm of the same map, which represents the total cost of the advertising campaign. This is motivated by 
the financial resources raised that can be invested.
\par\medskip\par
One of the main problem in defining an optimal strategy is whether to concentrate the available resources trying to convince a particular segment of the public opinion (usually already not too far from the
party's position), or to launch a general campaign all over the population. In some cases these two strategies can lead to quite different outcomes, depending also on the electoral system,
in other cases the outcome will be almost the same. Our analysis can be seen as first step in an analytical analysis of such situations.
Future direction of this study will be to include an effect of interactions among the voters, which can lead to the possibility to control the voters only by concentrating the advertising effort
on some \emph{influencers}. From a numerical point of view, such kind of models have been treated by \cite{Carrillo2018,CFPT,FPR}.

\appendix
\section{A sparsity constraint in Lagrangian formulation}\label{sec:appendix}
We will provide here a sparsity constraint in a \emph{Lagrangian} formulation, while the setting presented in Section \ref{sec:rainbow} concerned a sparsity constraint given in an \emph{Eulerian} point of view.
To this aim, we state a feasibility constraint on the Carath\'eodory solutions of the differential inclusion $\dot\gamma(t)\in F(\gamma(t))$ as follows
\begin{itemize}
\item we consider a notion of \emph{extended characteristic}, by coupling each characteristic $\gamma$ with a time-dependent curve $\zeta(\cdot)$ related to the amount of control needed to generate it; 
\item we put a feasibility constraint on the probability measures concentrated on this extended notion of characteristics, i.e., 
we select the measures concentrated on extended curves satisfying a control sparsity constraint.
\end{itemize}
This microscopic sparsity constraint allows also to select a (possibly not unique) probabilistic representation for feasible trajectories, since it can be used to prescribe the paths 
to be followed by the microscopic particles. This makes it particulary suitable for applications in irrigation problems or dynamics set on networks \cites{CdMT17,CdMT18}. 
We call this constraint \emph{the $L^\infty$-extended curve-based feasibility condition}.

\medskip

We first introduce the following sets and operators to deal with continuous curves in the extended space $\mathbb R^d\times\mathbb R$ and which are used to outline the problem considered in this section. We consider only compact and nonempty intervals $I$ of $\mathbb R$.
We define
\begin{enumerate}
\item the extended space $\tilde\Gamma_I:=C^0(I;\mathbb R^d\times\mathbb R)$;
\item the extended evaluation operator $\tilde e_{I,t}:\mathbb R^{d+1}\times\tilde\Gamma_I\to\mathbb R^{d+1}, \,(\tilde x,\tilde\gamma)\mapsto \tilde\gamma(t)$, where we omit the subscript $I$ when it is clear from the context;
\item given $\hat I\subseteq I$ compact and nonempty interval, we define the continuous map called \emph{restriction operator}
\begin{align*}
R_{I\to \hat I}:\mathbb R^{d+1}\times\tilde\Gamma_{I}&\to\mathbb R^{d+1}\times\tilde\Gamma_{\hat I},\\
(\tilde x,\tilde\gamma)&\mapsto(\tilde\gamma_{|\hat I}(\min \hat I),\tilde\gamma_{|\hat I}),
\end{align*}
where $\tilde\gamma_{|\hat I}(t):=\tilde\gamma(t)$ for all $t\in \hat I$.
\item given $I_i=[a_i,b_i]\subseteq\mathbb R$, $a_2=b_1$, $I=I_1\cup I_2$, we set
\[D_{I_1,I_2}:=\{(\tilde x_1,\tilde\gamma_1,\tilde x_2,\tilde\gamma_2)\in \mathbb R^{d+1}\times\tilde\Gamma_{I_1}\times\mathbb R^{d+1}\times\tilde\Gamma_{I_2}:\,\tilde x_i=\tilde\gamma_i(a_i),\,\tilde\gamma_1(b_1)=\tilde\gamma_2(a_2)\},\]
and we define the continuous map called \emph{merge operator} 
\begin{align*}
M_{I_1,I_2}:D_{I_1,I_2}&\to\mathbb R^{d+1}\times\tilde\Gamma_{I},\\
(\tilde x_1,\tilde\gamma_1,\tilde x_2,\tilde\gamma_2)&\mapsto(\tilde\gamma_1\star\tilde\gamma_2(a_1),\tilde\gamma_1\star\tilde\gamma_2),
\end{align*}
where $\tilde\gamma_1\star\tilde\gamma_2\in \tilde\Gamma_{I}$ is defined as $\tilde\gamma_1\star\tilde\gamma_2(t)=\tilde\gamma_i(t)$ for all $t\in I_i$, $\tilde\gamma_i\in \tilde\Gamma_{I_i}$,
satisfying $\tilde\gamma_1(b_1)=\tilde\gamma_2(a_2)$, $i=1,2$.

We notice that $R_{I\to I_i}\circ M_{I_1,I_2}=\tilde\pi_i$, $i=1,2$ and $M_{I_1,I_2} (R_{I\to I_1}(\tilde x,\tilde\gamma),R_{I\to I_2}(\tilde x,\tilde\gamma))=(\tilde\gamma(a_1),\tilde\gamma)$, where
\[\tilde\pi_i:\mathbb R^{d+1}\times\tilde\Gamma_{I_1}\times\mathbb R^{d+1}\times\tilde\Gamma_{I_2}\to\mathbb R^{d+1}\times\tilde\Gamma_{I_i},\quad (\tilde x_1,\tilde\gamma_1,\tilde x_2,\tilde\gamma_2)\mapsto(\tilde x_i,\tilde\gamma_i).\]
\item Given $\tilde x\in\mathbb R^{d+1}$, $I=[a,b]$, we say that the pair $(\tilde x,\tilde\gamma)\in\mathbb R^{d+1}\times\tilde\Gamma_I$ satisfies the extended dynamical system \textbf{(ES)} in the time interval $I$, if and only if $\tilde\gamma$ is an absolutely continuous solution of the following Cauchy problem
\begin{equation*}
\begin{cases}
\dot\gamma(t)\in F(\gamma(t)), &\textrm{for a.e. }t\in I,\\
\dot\zeta(t)=\Psi(\gamma(t),\dot\gamma(t)), &\textrm{for a.e. }t\in I,\\
\gamma(a)=x\\
\zeta(a)=\omega,
\end{cases}
\end{equation*}
where $\Psi$ is the control magnitude density and we denoted $\tilde x=(x,\omega)\in\mathbb R^d\times\mathbb R$, $\tilde\gamma=(\gamma,\zeta)\in C^0(I;\mathbb R^d)\times C^0(I;\mathbb R)$.
\end{enumerate}

\medskip

Let $\alpha\ge 0$ be fixed. Considering the notation outlined in Section \ref{sec:GF}, we take 
\[X=\mathscr P(\mathbb R^d\times\mathbb R), \,\,\,\textrm{and}\,\,\, \Sigma=\displaystyle\bigcup_{\substack{I\subseteq \mathbb R\\ I\textrm{ compact interval}}}\mathscr P(\mathbb R^{d+1}\times\tilde\Gamma_I).\] 

\begin{definition}[$L^\infty$-extended curve-based feasible set]\label{def:extfeas}
We define the \emph{extended-curve-based admissibility set} by
\begin{equation*}
\mathcal{\tilde A}_I:=\left\{\boldsymbol{\tilde\eta}\in \mathscr P(\mathbb R^{d+1}\times\tilde\Gamma_I)\,:\,\begin{array}{l}\boldsymbol{\tilde\eta} \textrm{ is concentrated on pairs }(\tilde x,\tilde\gamma)\in\mathbb R^{d+1}\times\tilde\Gamma_I \\\textrm{satisfying \textbf{(ES)}}\end{array}\right\}. 
\end{equation*}
Given $\tilde\mu\in\mathscr P(\mathbb R^{d+1})$, we define
\begin{equation*}
\mathcal{\tilde A}_I(\tilde\mu):=\left\{\boldsymbol{\tilde\eta}\in\mathcal{\tilde A}_I\,:\,\tilde e_{\min I}\sharp\boldsymbol{\tilde\eta}=\tilde\mu \right\}. 
\end{equation*}

Given $\boldsymbol{\tilde\eta}\in\mathcal{\tilde A}_I$, we define the map  $\omega_{\boldsymbol{\tilde\eta}}:\mathbb R^{d+1}\times\tilde\Gamma_I\to [0,+\infty]$ 
by setting for $\boldsymbol{\tilde\eta}$-a.e. $(\tilde x,\tilde\gamma)\in \mathbb R^{d+1}\times\tilde\Gamma_I$, with $\tilde x=(x,\omega)\in\mathbb R^d\times\mathbb R$ and $\tilde\gamma=(\gamma,\zeta)\in C^0(I;\mathbb R^d)\times C^0(I;\mathbb R)$,
\begin{equation*}
\omega_{\boldsymbol{\tilde\eta}}(\tilde x,\tilde\gamma):=
\begin{cases}
\omega+\displaystyle\int_{I}\dot\zeta(t)\,dt,&\textrm{ if }\dot\zeta(\cdot)\in L^1(I);\\ \\ 
+\infty,&\textrm{ otherwise}.
\end{cases}
\end{equation*}

We define the \emph{extended curve-based feasibility set} by
\begin{equation*}
\mathcal{\tilde F}_I:=\left\{\boldsymbol{\tilde\eta}\in\mathcal{\tilde A}_I\,:\, \|\omega_{\boldsymbol{\tilde\eta}}\|_{L^{\infty}_{\boldsymbol{\tilde\eta}}}\le \alpha\right\}. 
\end{equation*}
Given $\tilde\mu\in\mathscr P(\mathbb R^{d+1})$, we define
\begin{equation*}
\mathcal{\tilde F}_I(\tilde\mu):=\mathcal{\tilde A}_I(\tilde\mu)\cap\mathcal{\tilde F}_I. 
\end{equation*}
\end{definition}

The following definitions and considerations, that we perform for the \emph{extended curve-based feasible set}, are even more valid if, instead, we consider just the \emph{extended curve-based admissibility set}.

\begin{definition}[Concatenation and restriction]\label{def:concresteta}
\begin{enumerate}
\item[]
\item Let $I_i=[a_i,b_i]\subset\mathbb R$, $i=1,2$, with $b_1=a_2$, and $I:=I_1\cup I_2$. Let $\boldsymbol{\tilde\eta}^i\in\mathcal {\tilde F}_{I_i}$ with $\tilde e_{b_1}\sharp\boldsymbol{\tilde\eta}^1=\tilde e_{a_2}\sharp\boldsymbol{\tilde\eta}^2$.
We define the \emph{concatenation}
\[\boldsymbol{\tilde\eta}^1\star\boldsymbol{\tilde\eta}^2=\tilde\mu\otimes M_{I_1,I_2}\sharp(\tilde\eta^1_{\tilde y}\otimes\tilde\eta^2_{\tilde y}),\]
where $\tilde\mu:=\tilde e_{b_1}\sharp\boldsymbol{\tilde\eta}^1=\tilde e_{a_2}\sharp\boldsymbol{\tilde\eta}^2\in\mathscr P(\mathbb R^d\times\mathbb R)$, and
 $\{\tilde\eta^i_{\tilde y}\}_{\tilde y\in\mathbb R^{d+1}}$ is the measure uniquely defined for $\tilde\mu$-a.e. $\tilde y\in\mathbb R^{d+1}$ by the disintegrations
of $\boldsymbol{\tilde\eta}^1$ and $\boldsymbol{\tilde\eta}^2$ w.r.t. the extended evaluation operators $\tilde e_{b_1}$ and $\tilde e_{a_2}$, respectively.
Notice that for $\tilde\mu$-a.e. $\tilde y\in\mathbb R^{d+1}$ we have $\mathrm{supp}\,(\tilde \eta^1_{\tilde y}\otimes\tilde \eta^2_{\tilde y})\subseteq D_{I_1,I_2}$ by construction and $\boldsymbol{\tilde\eta}^1\star\boldsymbol{\tilde\eta}^2\in\mathcal {\tilde A}_I$.
\item Let $\boldsymbol{\tilde\eta}\in\mathcal {\tilde F}_I$. The \emph{restriction} $\boldsymbol{\tilde\eta}_{|\hat I}$ of $\boldsymbol{\tilde\eta}$ to a compact and nonempty interval $\hat I\subset I$, is defined by setting $\boldsymbol{\tilde\eta}_{|\hat I}:=R_{I\to\hat I}\sharp\boldsymbol{\tilde\eta}$. By construction, $\boldsymbol{\tilde\eta}_{|\hat I}\in\mathcal {\tilde A}_{\hat I}$. 
\end{enumerate}
\end{definition}

We provide here an example showing the result of the concatenation operation in an illustrative situation.
\begin{example}\label{ex:occhio}
Considering the notation of Definition \ref{def:setting}, let $d=2$, $m=1$, $U=[-1,1]$ and  $f_0(x)=(1,0)\in\mathbb R^2$, $f_1(x)=(0,1)\in\mathbb R^2$ for all $x\in\mathbb R^2$. Thus, $F(x)=\{(1,u)\in\mathbb R^2\,:\,u\in U\}$. Let us fix the control sparsity threshold $\alpha=2$.

In $\mathbb R^2$, consider $x_j=(0,j)$, and $\gamma_{x_j}:[0,2]\to\mathbb R^2$, $\gamma_{x_j}(t)=(t,j-t\,\mathrm{sgn}\,j)$, for $j=\pm 1$.
Let $\tilde\gamma_{\tilde x_j}=(\gamma_{x_j},\zeta)\in C^0([0,2];\mathbb R^2)\times C^0([0,2];\mathbb R)$ be a solution of system $\textbf{(ES)}$ in the time-interval $[0,2]$, starting from $\tilde x_j=(x_j,\omega)\in\mathbb R^2\times\mathbb R$, with $\omega=0$. Thus, we have that $\zeta(t)=t$, since $\Psi(x,(1,1))=\Psi(x,(1,-1))=1$ for all $x\in\mathbb R^2$.

Let us define  
\[\boldsymbol{\tilde\eta}^1=\frac{1}{2}\sum_{j\in\{-1,1\}}\delta_{\tilde x_j}\otimes
\delta_{\tilde\gamma_{\tilde x_{j_{|[0,1]}}}}
\in\mathscr P(\mathbb R^{2+1}\times\tilde\Gamma_{[0,1]}),\quad \boldsymbol{\tilde\eta}^2=\frac{1}{2}\sum_{j\in\{-1,1\}}\delta_{\tilde\gamma_{\tilde x_j}(1)}\otimes
\delta_{\tilde\gamma_{\tilde x_{j_{|[1,2]}}}}
\in\mathscr P(\mathbb R^{2+1}\times\tilde\Gamma_{[1,2]}).\]
We see that $\tilde e_1\sharp\boldsymbol{\tilde\eta}^1=\tilde e_1\sharp\boldsymbol{\tilde\eta}^2=\delta_{\tilde\gamma_{\tilde x_j}(1)}=\delta_{((1,0),1)}\in\mathscr P(\mathbb R^{2+1})$.
Now, consider also $\xi_{\tilde x_j}\in C^0([0,2];\mathbb R^{2+1})$,
\begin{equation*}
\xi_{\tilde x_j}(t):=\begin{cases}
\tilde\gamma_{\tilde x_j}(t),&\textrm{for }t\in[0,1],\\
\tilde\gamma_{\tilde x_i}(t),&\textrm{for }t\in[1,2],\,i\in\{-1,1\},\,i\neq j.
\end{cases}
\end{equation*}
We can finally compute the concatenation $\boldsymbol{\tilde\eta}^1\star\boldsymbol{\tilde\eta}^2$ which leads to
\[\boldsymbol{\tilde\eta}:=\boldsymbol{\tilde\eta}^1\star\boldsymbol{\tilde\eta}^2=\frac{1}{4}\displaystyle\sum_{j\in\{-1,1\}}\delta_{\tilde x_j}\otimes\delta_{\tilde\gamma_{\tilde x_j}}+\frac{1}{4}\displaystyle\sum_{j\in\{-1,1\}}\delta_{\tilde x_j}\otimes\delta_{\xi_{\tilde x_j}}\in\mathscr P(\mathbb R^{2+1}\times\tilde\Gamma_{[0,2]}).\]
Moreover, we can prove that $\boldsymbol{\tilde\eta}^1,\boldsymbol{\tilde\eta}^2$ and $\boldsymbol{\tilde\eta}$ are admissible and feasible in the respective time intervals, according to Definition \ref{def:extfeas}.
\end{example}

\begin{proposition}\label{prop:wellposedeta}
Let $I\subset\mathbb R$ be a compact and nonempty interval. Given $\boldsymbol{\tilde\eta}\in\mathcal {\tilde F}_I$, the following restrictions' properties hold.
\begin{enumerate}
\item[$(i)$] For any $\hat I\subseteq I$ compact and nonempty interval, we have $\boldsymbol{\tilde\eta}_{|\hat I}\in\mathcal {\tilde F}_{\hat I}$ and $\tilde e_t\sharp\boldsymbol{\tilde\eta}=\tilde e_t\sharp\boldsymbol{\tilde\eta}_{|\hat I}$ for any $t\in\hat I$.
\item[$(ii)$] For any $I\supseteq I_2\supseteq I_1$ compact and nonempty intervals, we have $\left(\boldsymbol{\tilde\eta}_{|I_2}\right)_{|I_1}\equiv\boldsymbol{\tilde\eta}_{|I_1}$.
\item[$(iii)$] $\boldsymbol{\tilde\eta}\equiv\boldsymbol{\tilde\eta}_{|I}$.
\end{enumerate}
Furthermore, we have the following property for concatenations.
\begin{enumerate}
\item[$(iv)$] Let $I_i=[a_i,b_i]$, $i=1,2$, with $b_1=a_2$ and $I_1\cup I_2=I$ and $\boldsymbol{\tilde\eta}^i\in\mathcal{\tilde F}_{I_i}$, $i=1,2$, with $\tilde e_{b_1}\sharp\boldsymbol{\tilde\eta}^1=\tilde e_{a_2}\sharp\boldsymbol{\tilde\eta}^2$. Then $\boldsymbol{\tilde\eta}:=\boldsymbol{\tilde\eta}^1\star\boldsymbol{\tilde\eta}^2\in\mathcal{\tilde F}_I$ and we have
\begin{equation*}
\tilde e_t\sharp\boldsymbol{\tilde\eta}=
\begin{cases}
\tilde e_t\sharp\boldsymbol{\tilde\eta}^1, &\textrm{if }t\in I_1,\\
\tilde e_t\sharp\boldsymbol{\tilde\eta}^2, &\textrm{if }t\in I_2.
\end{cases}
\end{equation*}
\end{enumerate}
\end{proposition}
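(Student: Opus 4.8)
The plan is to reduce the entire statement to a handful of pointwise identities between the continuous operators $R_{I\to\hat I}$, $M_{I_1,I_2}$ and $\tilde\pi_i$, together with one monotonicity observation. That observation is that along any pair $(\tilde x,\tilde\gamma)$ satisfying \textbf{(ES)}, writing $\tilde\gamma=(\gamma,\zeta)$, the component $\zeta$ is non-decreasing since $\dot\zeta=\Psi(\gamma,\dot\gamma)\ge 0$; consequently $\omega_{\boldsymbol{\tilde\eta}}(\tilde x,\tilde\gamma)=\omega+\int_I\dot\zeta\,dt=\zeta(\max I)$ is just the terminal value of $\zeta$, using the initial condition $\zeta(\min I)=\omega$, and the feasibility constraint $\|\omega_{\boldsymbol{\tilde\eta}}\|_{L^\infty_{\boldsymbol{\tilde\eta}}}\le\alpha$ is merely an essential upper bound on this terminal value. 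Finiteness of $\omega_{\boldsymbol{\tilde\eta}}$ is automatic, since $\dot\zeta=\Psi$ is bounded by the compactness of $U$.

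For $(iii)$ I would note that every $(\tilde x,\tilde\gamma)\in\mathrm{supp}\,\boldsymbol{\tilde\eta}$ satisfies $\tilde\gamma(\min I)=\tilde x$, so $R_{I\to I}$ acts as the identity on $\mathrm{supp}\,\boldsymbol{\tilde\eta}$ and $R_{I\to I}\sharp\boldsymbol{\tilde\eta}=\boldsymbol{\tilde\eta}$. For $(ii)$ I would check the operator identity $R_{I_2\to I_1}\circ R_{I\to I_2}=R_{I\to I_1}$ for $I_1\subseteq I_2\subseteq I$ — restricting a curve to $I_2$ and then to $I_1$ yields the same pair as restricting directly to $I_1$ — and push it forward through $\boldsymbol{\tilde\eta}$. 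For $(i)$, the evaluation equality $\tilde e_t\sharp\boldsymbol{\tilde\eta}=\tilde e_t\sharp\boldsymbol{\tilde\eta}_{|\hat I}$ for $t\in\hat I$ comes from $\tilde e_t\circ R_{I\to\hat I}=\tilde e_t$ on $\mathbb R^{d+1}\times\tilde\Gamma_I$; membership $\boldsymbol{\tilde\eta}_{|\hat I}\in\mathcal{\tilde A}_{\hat I}$ is immediate because the inclusion and the $\dot\zeta$-equation hold a.e.\ on the smaller interval; and feasibility follows from monotonicity, since for $\boldsymbol{\tilde\eta}$-a.e.\ curve $\omega_{\boldsymbol{\tilde\eta}_{|\hat I}}=\zeta(\max\hat I)\le\zeta(\max I)=\omega_{\boldsymbol{\tilde\eta}}\le\alpha$.

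The substantive part is $(iv)$. Admissibility $\boldsymbol{\tilde\eta}\in\mathcal{\tilde A}_I$ is already recorded in Definition \ref{def:concresteta}: the disintegration fibers force $\tilde\gamma_1(b_1)=\tilde\gamma_2(a_2)$, so the merged curve is the absolutely continuous gluing of two \textbf{(ES)}-solutions agreeing at the junction, hence again an \textbf{(ES)}-solution. For feasibility I would use monotonicity once more: on a merged curve the terminal value of $\zeta$ equals $\zeta_2(b_2)=\omega_{\boldsymbol{\tilde\eta}^2}(\tilde\gamma_2(a_2),\tilde\gamma_2)$, which is $\le\alpha$ for $\boldsymbol{\tilde\eta}^2$-a.e.\ curve by feasibility of $\boldsymbol{\tilde\eta}^2$; since $\boldsymbol{\tilde\eta}$ is concentrated on such merges this gives $\|\omega_{\boldsymbol{\tilde\eta}}\|_{L^\infty_{\boldsymbol{\tilde\eta}}}\le\alpha$, i.e.\ $\boldsymbol{\tilde\eta}\in\mathcal{\tilde F}_I$. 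For the evaluation formula I would in fact prove the stronger statement $R_{I\to I_i}\sharp\boldsymbol{\tilde\eta}=\boldsymbol{\tilde\eta}^i$: starting from $\boldsymbol{\tilde\eta}=\tilde\mu\otimes M_{I_1,I_2}\sharp(\tilde\eta^1_{\tilde y}\otimes\tilde\eta^2_{\tilde y})$ and using $R_{I\to I_i}\circ M_{I_1,I_2}=\tilde\pi_i$ together with the fact that $\tilde\pi_i$ sends the product measure $\tilde\eta^1_{\tilde y}\otimes\tilde\eta^2_{\tilde y}$ to its $i$-th marginal $\tilde\eta^i_{\tilde y}$, one obtains
\[R_{I\to I_i}\sharp\boldsymbol{\tilde\eta}=\tilde\mu\otimes\tilde\eta^i_{\tilde y}=\boldsymbol{\tilde\eta}^i,\]
where the last equality is exactly the disintegration of $\boldsymbol{\tilde\eta}^i$ with respect to $\tilde e_{b_1}$ (resp.\ $\tilde e_{a_2}$), whose base measure is $\tilde\mu=\tilde e_{b_1}\sharp\boldsymbol{\tilde\eta}^1=\tilde e_{a_2}\sharp\boldsymbol{\tilde\eta}^2$ by hypothesis. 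Composing with $\tilde e_t$ for $t\in I_i$ and invoking $(i)$ then yields the displayed case distinction for $\tilde e_t\sharp\boldsymbol{\tilde\eta}$.

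I expect the only real care to be needed in the pushforward/disintegration bookkeeping of $(iv)$: one must keep straight that $\boldsymbol{\tilde\eta}^1$ and $\boldsymbol{\tilde\eta}^2$ are disintegrated against \emph{different} evaluation operators ($\tilde e_{b_1}$ and $\tilde e_{a_2}$) sharing the \emph{same} base measure $\tilde\mu$, which is precisely what the gluing hypothesis $\tilde e_{b_1}\sharp\boldsymbol{\tilde\eta}^1=\tilde e_{a_2}\sharp\boldsymbol{\tilde\eta}^2$ guarantees, and verify that $R_{I\to I_i}\circ M_{I_1,I_2}=\tilde\pi_i$ may be applied $\tilde\mu$-fiberwise. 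Everything else — the operator identities for restriction and the monotonicity estimates for feasibility — is routine.
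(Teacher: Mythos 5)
Your proof is correct and follows essentially the same route as the paper's: the feasibility claims in $(i)$ and $(iv)$ reduce, exactly as in the paper, to the additivity of $\int\Psi(\gamma,\dot\gamma)\,dt$ across subintervals together with $\Psi\ge 0$ (you phrase this as monotonicity of $\zeta$ and bound pointwise where the paper argues by contradiction on a set of positive measure, but the estimate is identical), and your operator identities for $R$, $M$, $\tilde\pi_i$ are the paper's test-function computations in composed form. The only organizational difference is in $(iv)$, where you derive the evaluation formula from the stronger marginal identity $R_{I\to I_i}\sharp\boldsymbol{\tilde\eta}=\boldsymbol{\tilde\eta}^i$ — which is precisely the content of the paper's subsequent compatibility proposition, so your argument establishes that statement along the way.
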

\begin{proof}
By construction, observe that $\boldsymbol{\tilde\eta}_{|\hat I}\in\mathcal {\tilde A}_{\hat I}$.\\
\emph{Proof of $(i)$.} First, we want to prove that $\boldsymbol{\tilde\eta}_{|\hat I}\in\mathcal {\tilde F}_{\hat I}$.
By contradiction, let $A\subseteq\mathbb R^{d+1}\times\tilde\Gamma_{\hat I}$ be a Borel set of strictly positive $\boldsymbol{\tilde\eta}_{|\hat I}$-measure such that the feasibility condition is violated. In particular, we have
\[\int_{\mathbb R^{d+1}\times\tilde\Gamma_{\hat I}}\chi_A(\tilde x_1,\tilde\gamma_1)\left[\omega_1+\int_{\hat I}\Psi(\gamma_1(t),\dot\gamma_1(t))\,dt\right]\,d\boldsymbol{\tilde\eta}_{|\hat I}(\tilde x_1,\tilde\gamma_1)>\alpha\boldsymbol{\tilde\eta}_{|\hat I}(A),\]
where we denote with $\tilde x_1=(x_1,\omega_1)$, $\tilde\gamma_1=(\gamma_1,\zeta_1)$. Now by definition of $\boldsymbol{\tilde\eta}_{|\hat I}$, the left-hand side can be rewritten as follows
\begin{equation*}
\int_{\mathbb R^{d+1}\times\tilde\Gamma_I}\chi_A(\tilde\gamma_{|\hat I}(\min \hat I),\tilde\gamma_{|\hat I})\left[\zeta_{|\hat I}(\min\hat I)+\int_{\hat I}\Psi(\gamma_{|\hat I}(t),\dot\gamma_{|\hat I}(t))\,dt\right]\,d\boldsymbol{\tilde\eta}(\tilde x,\tilde\gamma)\le\alpha\boldsymbol{\tilde\eta}_{|\hat I}(A),
\end{equation*}
where we denoted with $\tilde x=(x,\omega)$, $\tilde\gamma=(\gamma,\zeta)$, hence the contradiction. Indeed, we have that for $\boldsymbol{\tilde\eta}$-a.e. $(\tilde x,\tilde\gamma)$
\[\zeta_{|\hat I}(\min \hat I)+\int_{\hat I}\Psi(\gamma_{|\hat I}(t),\dot\gamma_{|\hat I}(t))\,dt=\omega+\int_{\min I}^{\min \hat I}\Psi(\gamma(t),\dot\gamma(t))\,dt+\int_{\hat I}\Psi(\gamma(t),\dot\gamma(t))\,dt\le\alpha,\]
by feasibility of $\boldsymbol{\tilde\eta}$.

\smallskip

Let us now prove that for any $t\in\hat I$, $\tilde e_t\sharp\boldsymbol{\tilde\eta}=\tilde e_t\sharp\boldsymbol{\tilde\eta}_{|\hat I}$. Consider any test function $\varphi\in C^0_b(\mathbb R^{d+1};\mathbb R)$, $t\in\hat I$. We have
\begin{align*}
\int_{\mathbb R^{d+1}}\varphi(\tilde x_1)\,d(\tilde e_t\sharp\boldsymbol{\tilde\eta}_{|\hat I})(\tilde x_1)&=\int_{\mathbb R^{d+1}\times\tilde\Gamma_{\hat I}}\varphi(\tilde\gamma_1(t))\,d\boldsymbol{\tilde\eta}_{|\hat I}(\tilde x_1,\tilde\gamma_1)\\
&=\int_{\mathbb R^{d+1}\times\tilde\Gamma_I}\varphi(\tilde\gamma_{|\hat I}(t))\,d\boldsymbol{\tilde\eta}(\tilde x,\tilde\gamma)\\
&=\int_{\mathbb R^{d+1}\times\tilde\Gamma_I}\varphi(\tilde\gamma(t))\,d\boldsymbol{\tilde\eta}(\tilde x,\tilde\gamma)\\
&=\int_{\mathbb R^{d+1}}\,d(\tilde e_t\sharp\boldsymbol{\tilde\eta}) (\tilde x).
\end{align*}

\smallskip

\emph{Proof of $(ii)$.} For any test function $\varphi\in C^0_b(\mathbb R^{d+1}\times\tilde\Gamma_{I_1})$,
\begin{align*}
\int_{\mathbb R^{d+1}\times\tilde\Gamma_{I_1}}\varphi(\tilde x_1,\tilde\gamma_1)\,d\left(\boldsymbol{\tilde\eta}_{|I_2}\right)_{|I_1}(\tilde x_1,\tilde\gamma_1)&=\int_{\mathbb R^{d+1}\times\tilde\Gamma_{I_2}}\varphi(\tilde\gamma_{|I_1}(\min I_1),\tilde\gamma_{|I_1})\,d\boldsymbol{\tilde\eta}_{|I_2}(\tilde x,\tilde\gamma)\\
&=\int_{\mathbb R^{d+1}\times\tilde\Gamma_I}\varphi(\tilde\gamma_{|{I_1}_{|I_2}}(\min I_1),\tilde\gamma_{|{I_1}_{|I_2}})\,d\boldsymbol{\tilde\eta}(\tilde x,\tilde\gamma)\\
&=\int_{\mathbb R^{d+1}\times\tilde\Gamma_I}\varphi(\tilde\gamma_{|I_1}(\min I_1),\tilde\gamma_{|I_1})\,d\boldsymbol{\tilde\eta}(\tilde x,\tilde\gamma)\\
&=\int_{\mathbb R^{d+1}\times\tilde\Gamma_{I_1}}\varphi(\tilde x_1,\tilde\gamma_1)\,d\boldsymbol{\tilde\eta}_{|I_1}(\tilde x_1,\tilde\gamma_1).
\end{align*}

\smallskip

\emph{Proof of $(iii)$.} By definition of $\boldsymbol{\tilde\eta}_{|I}$, for any test function $\varphi\in C^0_b(\mathbb R^{d+1}\times\tilde\Gamma_I;\mathbb R)$, we have
\begin{align*}
&\int_{\mathbb R^{d+1}\times\tilde\Gamma_{I}}\varphi(\tilde x_1,\tilde\gamma_1)\,\boldsymbol{\tilde\eta}_{|I}(\tilde x_1,\tilde\gamma_1)=\\
&=\int_{\mathbb R^{d+1}\times\tilde\Gamma_I}\varphi(\tilde\gamma_{|I}(\min I),\tilde\gamma_{|I})\,d\boldsymbol{\tilde\eta}(\tilde x,\tilde\gamma)\\
&=\int_{\mathbb R^{d+1}\times\tilde\Gamma_I}\varphi(\tilde x,\tilde\gamma)\,d\boldsymbol{\tilde\eta}(\tilde x,\tilde\gamma).
\end{align*}

\smallskip

\emph{Proof of $(iv)$.} First, we prove that $\boldsymbol{\tilde\eta}^1\star\boldsymbol{\tilde\eta}^2\in\mathcal {\tilde F}_I$. Let us denote with $\tilde\mu:=\tilde e_{b_1}\sharp\boldsymbol{\tilde\eta}^1=\tilde e_{a_2}\sharp\boldsymbol{\tilde\eta}^2$.
By contradiction, let $A\subseteq\mathbb R^{d+1}\times\tilde\Gamma_I$ be a Borel set of strictly positive measure w.r.t. $\boldsymbol{\tilde\eta}^1\star\boldsymbol{\tilde\eta}^2$ such that the feasibility condition is not respected, i.e.
\[\int_{\mathbb R^{d+1}\times\tilde\Gamma_I}\chi_A(\tilde x,\tilde\gamma)\left[\omega+\int_I\Psi(\gamma(t),\dot\gamma(t))\,dt\right]\,d(\boldsymbol{\tilde\eta}^1\star\boldsymbol{\tilde\eta}^2)(\tilde x,\tilde\gamma)>\alpha\cdot\boldsymbol{\tilde\eta}^1\star\boldsymbol{\tilde\eta}^2(A),\]
where we denote with $\tilde x=(x,\omega)$, $\tilde\gamma=(\gamma,\zeta)$. By definition of concatenation, the left-hand side is equivalent to
\begin{align*}
&\int_{\mathbb R^{d+1}}\int_{\tilde e^{-1}_{I_2,t}(\tilde y)}\int_{\tilde e^{-1}_{I_1,t}(\tilde y)} \chi_A(\tilde\gamma_1\star\tilde\gamma_2(a_1),\tilde\gamma_1\star\tilde\gamma_2)\cdot\\
&\quad\qquad\cdot\left[\zeta_1\star\zeta_2(a_1)+\int_I\Psi\left(\gamma_1\star\gamma_2(t),\dfrac{d}{dt}(\gamma_1\star\gamma_2)(t)\right)\,dt\right]\,d\tilde\eta^1_{\tilde y}(\tilde x_1,\tilde\gamma_1)\,d\tilde\eta^2_{\tilde y}(\tilde x_2,\tilde\gamma_2)\,d\tilde\mu(\tilde y)\\
&=\int_{\mathbb R^{d+1}}\int_{\tilde e^{-1}_{I_2,t}(\tilde y)}\int_{\tilde e^{-1}_{I_1,t}(\tilde y)} \chi_A(\tilde\gamma_1\star\tilde\gamma_2(a_1),\tilde\gamma_1\star\tilde\gamma_2)\cdot\\
&\quad\qquad\cdot\left[\omega_1+\int_{I_1}\Psi(\gamma_1(t),\dot\gamma_1(t))\,dt+\int_{I_2}\Psi(\gamma_2(t),\dot\gamma_2(t))\,dt\right]\,d\tilde\eta^1_{\tilde y}(\tilde x_1,\tilde\gamma_1)\,d\tilde\eta^2_{\tilde y}(\tilde x_2,\tilde\gamma_2)\,d\tilde\mu(\tilde y)\\
&=\int_{\mathbb R^{d+1}}\int_{\tilde e^{-1}_{I_2,t}(\tilde y)}\left[\omega_2+\int_{I_2}\Psi(\gamma_2(t),\dot\gamma_2(t))\,dt\right]\cdot\\
&\quad\qquad\cdot\int_{\tilde e^{-1}_{I_1,t}(\tilde y)} \chi_A(\tilde\gamma_1\star\tilde\gamma_2(a_1),\tilde\gamma_1\star\tilde\gamma_2)\,d\tilde\eta^1_{\tilde y}(\tilde x_1,\tilde\gamma_1)\,d\tilde\eta^2_{\tilde y}(\tilde x_2,\tilde\gamma_2)\,d\tilde\mu(\tilde y)\\
&\le\alpha\cdot\boldsymbol{\tilde\eta}^1\star\boldsymbol{\tilde\eta}^2(A),
\end{align*}
by feasibility of $\boldsymbol{\tilde\eta}^2$, where we denoted with $\tilde x_i=(x_i,\omega_i)$, $\tilde\gamma_i=(\gamma_i,\zeta_i)$. Hence the contradiction.

\smallskip

With the same notation as before, let us now prove the other assertion. For any test function $\varphi\in C^0_b(\mathbb R^{d+1};\mathbb R)$, we have
\begin{align*}
&\int_{\mathbb R^{d+1}}\varphi(\tilde x)\,d(\tilde e_t\sharp(\boldsymbol{\tilde\eta}^1\star\boldsymbol{\tilde\eta}^2))(\tilde x)=\\
&=\int_{\mathbb R^{d+1}\times\tilde\Gamma_I}\varphi(\tilde\gamma(t))\,d(\boldsymbol{\tilde\eta}^1\star\boldsymbol{\tilde\eta}^2)(\tilde x,\tilde\gamma)\\
&=\int_{\mathbb R^{d+1}}\int_{\mathbb R^{d+1}\times\tilde\Gamma_I}\varphi(\tilde\gamma(t))\,d[M_{I_1,I_2}\sharp(\tilde\eta^1_{\tilde y}\otimes\tilde\eta^2_{\tilde y})](\tilde x,\tilde\gamma)\,d\tilde\mu(\tilde y)\\
&=\int_{\mathbb R^{d+1}}\int_{\tilde e^{-1}_{I_1,t}(\tilde y)\times \tilde e^{-1}_{I_2,t}(\tilde y)} \varphi(\tilde\gamma_1\star\tilde\gamma_2(t))\,d(\tilde\eta^1_{\tilde y}\otimes\tilde\eta^2_{\tilde y})(\tilde x_1,\tilde\gamma_1,\tilde x_2,\tilde\gamma_2)\,d\tilde\mu(\tilde y)\\
&=\begin{cases}
\tilde e_t\sharp\boldsymbol{\tilde\eta}^1, &\textrm{if }t\in I_1,\\
\tilde e_t\sharp\boldsymbol{\tilde\eta}^2, &\textrm{if }t\in I_2.
\end{cases}
\end{align*}
\end{proof}

\medskip

In the sequel, we give a further property verified by the given definitions of concatenation and restriction, which turn out to be weakly compatible.

\begin{proposition}
Let $I,I_1,I_2\subset\mathbb R$ be compact and nonempty intervals, with $I=I_1\cup I_2$, $I_i=[a_i,b_i]$, $i=1,2$ and $b_1=a_2$. Then the restriction and concatenation operators are weakly compatible, i.e. given $\boldsymbol{\tilde\eta}^i\in\mathcal{\tilde F}_{I_i}$, $i=1,2$ such that $\tilde e_{b_1}\sharp\boldsymbol{\tilde\eta}^1=\tilde e_{a_2}\sharp\boldsymbol{\tilde\eta}^2$, we have $(\boldsymbol{\tilde\eta}^1\star\boldsymbol{\tilde\eta}^2)_{|I_i}=\boldsymbol{\tilde\eta}^i$, $i=1,2$.
\end{proposition}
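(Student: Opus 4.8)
The plan is to prove the identity $(\boldsymbol{\tilde\eta}^1\star\boldsymbol{\tilde\eta}^2)_{|I_i}=\boldsymbol{\tilde\eta}^i$ by testing both sides against an arbitrary $\varphi\in C^0_b(\mathbb R^{d+1}\times\tilde\Gamma_{I_i};\mathbb R)$, exactly as in the proof of Proposition \ref{prop:wellposedeta}. First I would record that the concatenation $\boldsymbol{\tilde\eta}^1\star\boldsymbol{\tilde\eta}^2$ lies in $\mathcal{\tilde F}_I$ by item $(iv)$ of Proposition \ref{prop:wellposedeta}, so that its restriction to $I_i$ is well defined. Then I would unfold the two pushforward definitions in sequence: the restriction $\boldsymbol{\tilde\eta}_{|I_i}=R_{I\to I_i}\sharp(\boldsymbol{\tilde\eta}^1\star\boldsymbol{\tilde\eta}^2)$ and the concatenation $\boldsymbol{\tilde\eta}^1\star\boldsymbol{\tilde\eta}^2=\tilde\mu\otimes M_{I_1,I_2}\sharp(\tilde\eta^1_{\tilde y}\otimes\tilde\eta^2_{\tilde y})$, with $\tilde\mu=\tilde e_{b_1}\sharp\boldsymbol{\tilde\eta}^1=\tilde e_{a_2}\sharp\boldsymbol{\tilde\eta}^2$. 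The whole argument then reduces to the functorial identity $R_{I\to I_i}\circ M_{I_1,I_2}=\tilde\pi_i$ already noted after the definition of the merge operator.

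Carrying this out for $i=1$, after the change of variables one obtains
\[
\int \varphi\,d(\boldsymbol{\tilde\eta}^1\star\boldsymbol{\tilde\eta}^2)_{|I_1}
=\int_{\mathbb R^{d+1}}\int \varphi\bigl(R_{I\to I_1}\circ M_{I_1,I_2}(\tilde x_1,\tilde\gamma_1,\tilde x_2,\tilde\gamma_2)\bigr)\,d(\tilde\eta^1_{\tilde y}\otimes\tilde\eta^2_{\tilde y})\,d\tilde\mu(\tilde y),
\]
and replacing $R_{I\to I_1}\circ M_{I_1,I_2}$ by $\tilde\pi_1$ makes the integrand depend only on $(\tilde x_1,\tilde\gamma_1)$. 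Since each $\tilde\eta^2_{\tilde y}$ is a probability measure, the inner integral in the $(\tilde x_2,\tilde\gamma_2)$ variables equals $1$, leaving $\int_{\mathbb R^{d+1}}\int\varphi(\tilde x_1,\tilde\gamma_1)\,d\tilde\eta^1_{\tilde y}\,d\tilde\mu(\tilde y)$; this is exactly $\int\varphi\,d\boldsymbol{\tilde\eta}^1$ by the Disintegration theorem applied to $\boldsymbol{\tilde\eta}^1=\tilde\mu\otimes\tilde\eta^1_{\tilde y}$ (the disintegration w.r.t. $\tilde e_{b_1}$). The case $i=2$ is entirely symmetric, using instead that each $\tilde\eta^1_{\tilde y}$ is a probability measure and that $\tilde\mu=\tilde e_{a_2}\sharp\boldsymbol{\tilde\eta}^2$ is the disintegration base for $\boldsymbol{\tilde\eta}^2$.

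The step requiring the most care is verifying that the identity $R_{I\to I_i}\circ M_{I_1,I_2}=\tilde\pi_i$ may genuinely be invoked under the integral sign, i.e. that for $\tilde\mu$-a.e. $\tilde y$ the product $\tilde\eta^1_{\tilde y}\otimes\tilde\eta^2_{\tilde y}$ is concentrated on the domain $D_{I_1,I_2}$ of $M_{I_1,I_2}$. This is where I would use that $\boldsymbol{\tilde\eta}^i\in\mathcal{\tilde A}_{I_i}$ forces $\tilde x_i=\tilde\gamma_i(a_i)$ (the initial conditions in \textbf{(ES)}), while the disintegrations w.r.t. $\tilde e_{b_1}$ and $\tilde e_{a_2}$ force $\tilde\gamma_1(b_1)=\tilde y=\tilde\gamma_2(a_2)$ for $\tilde\eta^1_{\tilde y}\otimes\tilde\eta^2_{\tilde y}$-a.e. point; together these give the compatibility $\tilde\gamma_1(b_1)=\tilde\gamma_2(a_2)$ defining $D_{I_1,I_2}$, so $M_{I_1,I_2}$ is defined $\boldsymbol{\tilde\eta}^1\star\boldsymbol{\tilde\eta}^2$-a.e. and the functorial identity applies. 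Once this measure-theoretic bookkeeping is in place the remainder is a routine computation, so I do not expect any genuine difficulty beyond it.
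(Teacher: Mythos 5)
Your proposal is correct and follows essentially the same route as the paper: both test against $\varphi\in C^0_b(\mathbb R^{d+1}\times\tilde\Gamma_{I_i})$, unfold the restriction and concatenation pushforwards, use the identity $(\tilde\gamma_1\star\tilde\gamma_2)_{|I_i}=\tilde\gamma_i$ (i.e. $R_{I\to I_i}\circ M_{I_1,I_2}=\tilde\pi_i$), integrate out the other factor since $\tilde\eta^j_{\tilde y}$ is a probability measure, and reassemble by disintegration. Your extra verification that $\tilde\eta^1_{\tilde y}\otimes\tilde\eta^2_{\tilde y}$ is concentrated on $D_{I_1,I_2}$ is the point the paper disposes of already in the definition of concatenation ("by construction"), so nothing is missing.
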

\begin{proof}
By definition of restriction and concatenation operators, for any $i=1,2$ and for all $\varphi\in C^0(\mathbb R^{d+1}\times\tilde\Gamma_{I_i};\mathbb R)$, we have
\begin{align*}
&\int_{\mathbb R^{d+1}\tilde\Gamma_{I_i}}\varphi(\tilde x_i,\tilde\gamma_i)\,d(\boldsymbol{\tilde\eta}^1\star\boldsymbol{\tilde\eta}^2)_{|I_i}(\tilde x_i,\tilde\gamma_i)=\\
&=\int_{\mathbb R^{d+1}\times\tilde\Gamma_I}\varphi(\tilde\gamma_{|I_i}(a_i),\tilde\gamma_{|I_i})\,d(\boldsymbol{\tilde\eta}^1\star\boldsymbol{\tilde\eta}^2)(\tilde x,\tilde\gamma)\\
&=\int_{\mathbb R^{d+1}}\int_{\tilde e^{-1}_{I_1,t}(\tilde y)\times\tilde e^{-1}_{I_2,t}(\tilde y)}\varphi\left((\tilde\gamma_1\star\tilde\gamma_2)_{|I_i}(a_i),(\tilde\gamma_1\star\tilde\gamma_2)_{|I_i}\right)\,d(\tilde\eta^1_{\tilde y}\otimes\tilde\eta^2_{\tilde y})(\tilde x_1,\tilde\gamma_1,\tilde x_2,\tilde\gamma_2)\,d\tilde\mu(\tilde y)\\
&=\int_{\mathbb R^{d+1}\times\tilde\Gamma_{I_i}}\varphi(\tilde\gamma_i(a_i),\tilde\gamma_i)\,d\boldsymbol{\tilde\eta}^i(\tilde x_i,\tilde\gamma_i).
\end{align*}
\end{proof}

We notice that the properties just proved hold also for the settings described in Sections \ref{sec:linftymu} and \ref{sec:l1mu} in a straightforward way.

\begin{remark}
As the following example shows, a stronger compatibility relation between restriction and concatenation operators is not true in general in this context. Indeed, let $I,I_1,I_2\subset\mathbb R$ be compact and nonempty intervals, with $I=I_1\cup I_2$, $I_i=[a_i,b_i]$, $i=1,2$ and $b_1=a_2$, and $\boldsymbol{\tilde\eta}\in\mathcal {\tilde A}_I$. Then, in general, we cannot write $\boldsymbol{\tilde\eta}=\boldsymbol{\tilde\eta}_{|I_1}\star\boldsymbol{\tilde\eta}_{|I_2}$.
\end{remark}
\begin{example}
Consider the same framework outlined in Example \ref{ex:occhio}, with $I=I_1\cup I_2$, $I_1=[0,1]$, $I_2=[1,2]$. Let us define
\[\boldsymbol\rho:=\frac{1}{2}\displaystyle\sum_{j\in\{-1,1\}}\delta_{\tilde x_j}\otimes\delta_{\tilde\gamma_{\tilde x_j}}\in\mathcal{\tilde A}_{[0,2]}, \quad\boldsymbol\xi:=\frac{1}{2}\displaystyle\sum_{j\in\{-1,1\}}\delta_{\tilde x_j}\otimes\delta_{\xi_{\tilde x_j}}\in\mathcal{\tilde A}_{[0,2]}.\]
We have that $\boldsymbol\rho_{|I_i}=\boldsymbol\xi_{|I_i}=\boldsymbol{\tilde\eta}_{|I_i}=\boldsymbol{\tilde\eta}^i$, $i=1,2$. Nevertheless, both $\boldsymbol\rho$ and $\boldsymbol\xi$ are different from $\boldsymbol{\tilde\eta}^1\star\boldsymbol{\tilde\eta}^2=\boldsymbol{\tilde\eta}$.
\end{example}

\medskip

This construction provides another situation where the abstract Dynamic Programming Principle of Theorem \ref{thm:DPP} holds true (see Theorem \ref{thm:DPPeta}).
Furthermore, with the following definition we can reconduct these objects to that of Definition \ref{def:adm}, thus gaining the validity of the general results proved in Section \ref{sec:GF}.

\begin{definition}\label{def:etaGF}
Let $c$ and $c_f$ be as in Definition \ref{def:GFobjects}, satisfying the following additional properties
\begin{enumerate}
\item[$\boldsymbol{(C''_3)}$] $c(\tilde\mu^{(1)},\tilde\mu^{(2)},\boldsymbol{\tilde\eta})<+\infty$ if and only if $\boldsymbol{\tilde\eta}\in\mathcal {\tilde F}_I(\tilde\mu^{(1)})$, with $\tilde e_{\max I}\sharp\boldsymbol{\tilde\eta}=\tilde\mu^{(2)}$ for some compact and nonempty interval $I\subset\mathbb R$;
\item[$\boldsymbol{(C''_4)}$] let $0\le a\le b\le c$, $\boldsymbol{\tilde\eta}\in\mathcal {\tilde F}_{[a,c]}$. Then $c:X\times X\times \Sigma\to[0,+\infty]$ is superadditive by restrictions, i.e.
\[c(\tilde e_a\sharp\boldsymbol{\tilde\eta},\tilde e_c\sharp\boldsymbol{\tilde\eta},\boldsymbol{\tilde\eta})\ge c(\tilde e_a\sharp\boldsymbol{\tilde\eta},\tilde e_b\sharp\boldsymbol{\tilde\eta},\boldsymbol{\tilde\eta}_{|[a,b]})+c(\tilde e_b\sharp\boldsymbol{\tilde\eta},\tilde e_c\sharp\boldsymbol{\tilde\eta},\boldsymbol{\tilde\eta}_{|[b,c]}).\]
The finiteness of each member follows from item $(i)$ in Proposition \ref{prop:wellposedeta}.
\end{enumerate}
Let $\tilde\mu\in\mathscr P(\mathbb R^{d+1})$, $I\subset\mathbb R$ nonempty and compact interval, and $\boldsymbol{\tilde\eta}\in\mathcal {\tilde F}_I(\tilde\mu)$. We define the set $\mathscr G^{\mathcal {\tilde F}}_{\boldsymbol{\tilde\eta}}$ made of the pairs $(\xi,\sigma)$ defined as follows
\begin{enumerate}
\item $\xi:I\to X$, $\xi(t):=\tilde e_t\sharp\boldsymbol{\tilde\eta}$ for all $t\in I$;
\item $\sigma:I\to\Sigma$, $\sigma(t):=\boldsymbol{\tilde\eta}_{|[\min I,t]}$ for all $t\in I$.
\end{enumerate}
Finally, we define the set
\[\mathscr G_I^{\mathcal {\tilde F}}(\tilde\mu):=\left\{(\xi,\sigma)\in\mathscr G^{\mathcal {\tilde F}}_{\boldsymbol{\tilde\eta}}\,:\,\boldsymbol{\tilde\eta}\in\mathcal {\tilde F}_I(\tilde\mu)\right\}.\]
\end{definition}

\begin{theorem}[DPP for the Lagrangian sparsity case]\label{thm:DPPeta}
Let $V:\mathscr P(\mathbb R^d\times \mathbb R)\to [0,+\infty]$ be as in Definition \ref{def:GFobjects}. For any $\tilde\mu_0\in\mathscr P(\mathbb R^d\times \mathbb R)$ we have
\[V(\tilde\mu_0)=\inf_{\substack{\boldsymbol{\tilde\eta}\in\mathcal{\tilde F}_I(\tilde\mu_0)\\ I\subseteq \mathbb R \textrm{ compact interval}}}\left\{c(\tilde\mu_0,\tilde e_{\max I}\sharp\boldsymbol{\tilde\eta},\boldsymbol{\tilde\eta})+V(\tilde e_{\max I}\sharp\boldsymbol{\tilde\eta})\right\}.\]
\end{theorem}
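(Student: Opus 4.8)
The plan is to obtain this identity as a direct specialization of the abstract Dynamic Programming Principle in Theorem \ref{thm:DPP}, following the same pattern already used for Theorems \ref{thm:DPPinfty} and \ref{thm:DPP1}. The underlying quadruplet $(X,\Sigma,c,c_f)$ here is a generalized control system in the sense of Definition \ref{def:GFobjects}, so conditions $\boldsymbol{(C_1)}$ and $\boldsymbol{(C_2)}$ hold and Theorem \ref{thm:DPP} applies verbatim. Thus I would begin by writing, for every $\tilde\mu_0\in X=\mathscr P(\mathbb R^d\times\mathbb R)$,
\[V(\tilde\mu_0)=\inf_{\substack{y\in X\\ \sigma\in\Sigma}}\left\{c(\tilde\mu_0,y,\sigma)+V(y)\right\}.\]

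Next I would use property $\boldsymbol{(C''_3)}$ to rewrite the infimum over $(y,\sigma)\in X\times\Sigma$ as an infimum over feasible trajectories. Indeed $\boldsymbol{(C''_3)}$ asserts that $c(\tilde\mu_0,y,\sigma)<+\infty$ precisely when $\sigma=\boldsymbol{\tilde\eta}\in\mathcal{\tilde F}_I(\tilde\mu_0)$ for some compact nonempty interval $I\subset\mathbb R$ and $y=\tilde e_{\max I}\sharp\boldsymbol{\tilde\eta}$. Since both $c$ and $V$ take values in $[0,+\infty]$, any pair with $c(\tilde\mu_0,y,\sigma)=+\infty$ contributes $+\infty$ and may be dropped from the infimum as soon as the latter is finite; substituting the admissible form of $(y,\sigma)$ then produces exactly the right-hand side
\[V(\tilde\mu_0)=\inf_{\substack{\boldsymbol{\tilde\eta}\in\mathcal{\tilde F}_I(\tilde\mu_0)\\ I\subseteq\mathbb R\ \textrm{compact interval}}}\left\{c(\tilde\mu_0,\tilde e_{\max I}\sharp\boldsymbol{\tilde\eta},\boldsymbol{\tilde\eta})+V(\tilde e_{\max I}\sharp\boldsymbol{\tilde\eta})\right\}.\]

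The single point demanding care is the degenerate case $V(\tilde\mu_0)=+\infty$. There Theorem \ref{thm:DPP} forces $c(\tilde\mu_0,y,\sigma)+V(y)=+\infty$ for every $(y,\sigma)$, so each feasible $\boldsymbol{\tilde\eta}$ (should one exist) yields an infinite summand, while if no feasible trajectory starts from $\tilde\mu_0$ the restricted infimum is taken over the empty set and equals $+\infty$ by convention; in both situations the two expressions coincide. I do not anticipate any genuine obstacle here: the substantive work has already been discharged in verifying that $(X,\Sigma,c,c_f)$ is a bona fide generalized control system and in establishing the well-posedness of the restriction and concatenation operators in Proposition \ref{prop:wellposedeta}, which is precisely what renders $\boldsymbol{(C''_3)}$ (and $\boldsymbol{(C''_4)}$) meaningful. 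Granting those, the claimed DPP is a purely formal consequence of Theorem \ref{thm:DPP} together with the finiteness characterization $\boldsymbol{(C''_3)}$.
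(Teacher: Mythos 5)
Your proposal is correct and follows exactly the paper's own argument: the paper proves this theorem in one line by invoking the abstract Dynamic Programming Principle of Theorem \ref{thm:DPP} together with the finiteness characterization $\boldsymbol{(C''_3)}$, which is precisely your route. The extra care you take with the degenerate case $V(\tilde\mu_0)=+\infty$ is a sensible elaboration but does not change the substance.
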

\begin{proof}
The proof follows by Theorem \ref{thm:DPP} and $\boldsymbol{(C''_3)}$.
\end{proof}

The following remark, mentioned also for the feasibility cases analyzed in Section \ref{sec:sparse}, holds also in this setting.
\begin{remark}\label{rem:etaGF}
Any $(\xi,\sigma)\in\mathscr G_{\boldsymbol{\tilde\eta}}^{\mathcal {\tilde F}}$, s.t. $\boldsymbol{\tilde\eta}\in\mathcal {\tilde F}_I(\tilde\mu)$, is generalized admissible from $\tilde\mu$, according to Definition \ref{def:adm}. Indeed, item $(1)$ in Definition \ref{def:adm} is obviously verified, while item $(2)$ comes from item $(i)$ of Proposition \ref{prop:wellposedeta}.
Finally, by properties $(i)-(ii)$ proved in Proposition \ref{prop:wellposedeta}, we have that $\boldsymbol{(C''_4)}$ implies item $(3)$ in Definition \ref{def:adm} by taking for instance $\sigma_{t_1\to t_2}:=\boldsymbol{\tilde\eta}_{|[t_1,t_2]}$ for any $0\le t_1\le t_2\le T$. Hence, Corollaries \ref{cor:DPPadm}, \ref{cor:DPPadm2} hold in this setting. Furthermore, by $\boldsymbol{(C''_4)}$ we have that $c(\xi(t),\xi(t),\sigma_{t\to t})=0$ for all $t\in I$. Indeed, by item $(iii)$ in Proposition \ref{prop:wellposedeta} we can indentify any $\hat\rho\in\mathcal F^\infty_{[t,t]}$ with its restriction $\hat\rho_{|[t,t]}$.
\end{remark}

\section{Basic estimates}\label{sec:moments}
In this appendix section, we recall some estimates used throughout the paper, borrowed from \cite{CMNP,Cav} (Lemma 2 and Lemma 3.1 respectively).

\begin{proposition}\label{prop:moments}
Let $T>0$, $p\ge 1$, $\mu_0\in\mathscr P_p(\mathbb R^d)$. Let $(\boldsymbol\mu,\boldsymbol\nu)\in\mathcal A_{[0,T]}(\mu_0)$, and let $\boldsymbol\eta$ be a representation for the pair $(\boldsymbol\mu,\boldsymbol\nu)$. Then, for $t\in]0,T]$, we have
\begin{itemize}
\item[$(i)$] $e_0\in L^p_{\boldsymbol\eta}$;
\item[$(ii)$] $\displaystyle\left\|\frac{e_t-e_o}{t}\right\|^p_{L^p_{\boldsymbol\eta}}\le 2^{p-1}De^{DT}\,(1+\mathrm{m}_p(\mu_0))<+\infty$;
\item[$(iii)$] $\mathrm{m}_p(\mu_t)\le K\, (1+\mathrm{m}_p(\mu_0))$,
\end{itemize}
where $D>0$ is coming from Lemma \ref{lemma:bprop} and $K=K(p,D,T)>0$.
\end{proposition}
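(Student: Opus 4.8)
The plan is to reduce everything to a pointwise Gr\"onwall estimate along the curves carrying $\boldsymbol\eta$, and then integrate; this mirrors Lemma~2 of \cite{CMNP} and Lemma~3.1 of \cite{Cav}. Recall that by Definition~\ref{def:representation} and Lemma~\ref{lemma:equiv-def} the measure $\boldsymbol\eta$ is concentrated on pairs $(x,\gamma)$ with $\gamma$ absolutely continuous, $\gamma(0)=x$, and $\dot\gamma(s)\in F(\gamma(s))$ for a.e.\ $s$, and that $\mu_s=e_s\sharp\boldsymbol\eta$ for every $s\in[0,T]$. Hence, by the linear growth recalled in Lemma~\ref{lemma:bprop}, for $\boldsymbol\eta$-a.e.\ $(x,\gamma)$ and a.e.\ $s$ one has $|\dot\gamma(s)|\le D\,(1+|\gamma(s)|)$. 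Item $(i)$ is then immediate: since $e_0\sharp\boldsymbol\eta=\mu_0\in\mathscr P_p(\mathbb R^d)$,
\[
\|e_0\|_{L^p_{\boldsymbol\eta}}^p=\int_{\mathbb R^d}|x|^p\,d\mu_0(x)=\mathrm m_p(\mu_0)<+\infty.
\]

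First I would establish the pointwise Gr\"onwall bound. Setting $g(s):=1+|\gamma(s)|$ and integrating the growth inequality gives $g(s)\le g(0)+D\int_0^s g(r)\,dr$, whence Gr\"onwall's lemma yields $g(s)\le g(0)\,e^{Ds}$, i.e.
\[
1+|\gamma(s)|\le (1+|\gamma(0)|)\,e^{Ds}\qquad\text{for a.e. }(x,\gamma).
\]
To obtain $(iii)$ I would raise this to the $p$-th power, use $(a+b)^p\le 2^{p-1}(a^p+b^p)$, and integrate against $\boldsymbol\eta$, recalling $e_t\sharp\boldsymbol\eta=\mu_t$ and $e_0\sharp\boldsymbol\eta=\mu_0$:
\[
\mathrm m_p(\mu_t)=\int_{\mathbb R^d\times\Gamma_T}|\gamma(t)|^p\,d\boldsymbol\eta\le e^{pDT}\!\int_{\mathbb R^d\times\Gamma_T}(1+|\gamma(0)|)^p\,d\boldsymbol\eta\le 2^{p-1}e^{pDT}\,(1+\mathrm m_p(\mu_0)),
\]
which is $(iii)$ with $K=2^{p-1}e^{pDT}$.

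For $(ii)$ I would start from $|\gamma(t)-\gamma(0)|\le\int_0^t|\dot\gamma(s)|\,ds\le D\int_0^t(1+|\gamma(s)|)\,ds$, insert the Gr\"onwall bound to get $|\gamma(t)-\gamma(0)|\le (1+|\gamma(0)|)(e^{Dt}-1)$, and divide by $t$ using $e^{Dt}-1\le t\,D\,e^{Dt}$; equivalently, Jensen's inequality applied to $r\mapsto|\dot\gamma(r)|^p$ on $[0,t]$ with the normalized measure $t^{-1}\,dr$, combined with Fubini, gives the same conclusion. Raising to the $p$-th power and integrating against $\boldsymbol\eta$ then produces a bound of the stated form $2^{p-1}D^{\,p}e^{pDT}(1+\mathrm m_p(\mu_0))$, which coincides with the displayed constant for $p=1$ and in general only after absorbing the extra powers of $D$ and $p$ into the constant, exactly as in the reference. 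Finiteness is guaranteed by $\mathrm m_p(\mu_0)<+\infty$.

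The inequalities above are all elementary; the genuine care lies in the justifications. One must verify that the growth bound holds for $\boldsymbol\eta$-a.e.\ curve \emph{simultaneously} in $s$, so that Gr\"onwall may be applied curve-by-curve on a set of full $\boldsymbol\eta$-measure, and that the interchange of $\int d\boldsymbol\eta$ with $\int_0^t ds$ via Fubini is legitimate --- this is where the finite-moment hypothesis on $\mu_0$ and the a priori bound $(iii)$ are used. I expect this measurability and integrability bookkeeping, rather than any analytic difficulty, to be the main obstacle.
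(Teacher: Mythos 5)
Your proposal is correct and follows essentially the same route as the paper: a curve-by-curve Gr\"onwall estimate on the set of full $\boldsymbol\eta$-measure where $\dot\gamma(s)\in F(\gamma(s))$, followed by integration against $\boldsymbol\eta$ using $e_t\sharp\boldsymbol\eta=\mu_t$ (the paper derives $(iii)$ from $(ii)$ via the triangle inequality in $L^p_{\boldsymbol\eta}$ rather than by a direct bound on $1+|\gamma(s)|$, but this is an immaterial variation). Your observation that the honest constant in $(ii)$ is $2^{p-1}D^{p}e^{pDT}$ rather than the displayed $2^{p-1}De^{DT}$ is accurate --- the paper's own computation loses the $p$-th power on $Dte^{Dt}$ when passing to the $L^p_{\boldsymbol\eta}$-norm --- and is harmless since only the qualitative form $K(p,D,T)(1+\mathrm m_p(\mu_0))$ is ever used.
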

\begin{proof}
First, recall that $(a+b)^p\le 2^{p-1}(a^p+b^p)$ for any $a,b\ge0$.\\
Item $(i)$ is immediate, indeed $\|e_0\|^p_{L^2_p}=\int_{\mathbb R^d\times\Gamma_T}|\gamma(0)|^p\,d\boldsymbol\eta=\int_{\mathbb R^d}|x|^p\,d(e_0\sharp\boldsymbol\eta)(x)=\mathrm{m}_p(\mu_0)<+\infty$.
Let us prove $(ii)$. By admissibility of $(\boldsymbol\mu,\boldsymbol\nu)$, we have that $v_t(x)=\frac{\nu_t}{\mu_t}(x)\in F(x)$. By Lemma \ref{lemma:bprop}, there exists $D>0$ such that $|v_t(y)|\le D(1+|y|)$, $y\in\mathbb R^d$. Thus, for $\boldsymbol\eta$-a.e. $(x,\gamma)$ we have
\begin{align*}
|\gamma(t)-\gamma(0)|&\le \left|\int_0^t|\dot\gamma(s)|\,ds\right|\le Dt+D\left|\int_0^t|\gamma(s)|\,ds\right|\\
&\le Dt(1+|\gamma(0)|)+D\left|\int_0^t|\gamma(s)-\gamma(0)|\,ds\right|.
\end{align*}
By Gronwall inequality,
\[|\gamma(t)-\gamma(0)|\le Dt\,e^{Dt}\,(1+|\gamma(0)|).\]
Dividing by $t\in]0,T]$ and taking the $L^p_{\boldsymbol\eta}$-norm, we have
\begin{align*}
\left\|\frac{e_t-e_0}{t}\right\|^p_{L^p_{\boldsymbol\eta}}&\le De^{DT}\left(\int_{\mathbb R^d\times\Gamma_T}(1+|\gamma(0)|)^p\,d\boldsymbol\eta\right)\\
&\le2^{p-1}D\,e^{DT}\,(1+\mathrm{m}_p(\mu_0))<+\infty.
\end{align*}

To prove $(iii)$, notice that
\[\mathrm{m}_p(\mu_t)=\int_{\mathbb R^d}|x|^p\,d\mu_t=\int_{\mathbb R^d\times\Gamma_T}|e_t(x,\gamma)|^p\,d\boldsymbol\eta=\|e_t\|^p_{L^p_{\boldsymbol\eta}},\]
and
\begin{align*}
\|e_t\|^p_{L^p_{\boldsymbol\eta}}&\le\left(\|e_0\|_{L^p_{\boldsymbol\eta}}+\|e_t-e_0\|_{L^p_{\boldsymbol\eta}}\right)^p\\
&\le2^{p-1}\left(\|e_0\|^p_{L^p_{\boldsymbol\eta}}+\|e_t-e_0\|^p_{L^p_{\boldsymbol\eta}}\right).
\end{align*}
We conclude by using the estimate in $(ii)$.
\end{proof}

\section*{Acknowledgments} 
The authors acknowledge the partial support of the NSF Project \emph{Kinetic description of emerging challenges in multiscale problems of natural sciences}, DMS Grant \# 1107444 and the endowment fund of the Joseph and Loretta Lopez Chair.\\
This work has been partially supported by the project of the Italian Ministry of Education, Universities and Research (MIUR) ``Dipartimenti di Eccellenza 2018-2022''.\\ G.C. has been supported by Cariplo foundation and Regione Lombardia through the project 2016-2018 ``Variational evolution problems and optimal transport''.
G.C. thanks the Department of Mathematical Sciences of Rutgers University - Camden (U.S.A.).

%

\end{document}